\theoremstyle{proclaim}
\newtheorem{theorem}{Theorem}[section]
\newtheorem{lemma}[theorem]{Lemma}
\newtheorem{corollary}[theorem]{Corollary}
\newtheorem{problem}[theorem]{Problem}
\newtheorem{proposition}[theorem]{Proposition}
\theoremstyle{fancyproclaim}
\theoremstyle{statement}
\newtheorem{remark}[theorem]{Remark}
\newtheorem{example}[theorem]{Example}
\theoremstyle{fancystatement}
\numberwithin{equation}{section}
\providecommand{\AMS}{$\mathcal{A}$\kern-.1667em%
\newcommand{\R}{\mathbb{R}}
\newcommand{\N}{\mathbb{N}}
\newcommand{\eps}{\varepsilon}
\newcommand{\supp}{\mathrm{supp}}
\numberwithin{equation}{section} \linespread{1.2}
\lower.25em\hbox{$\mathcal{M}$}\kern-.125em$\mathcal{S}$}
\newcommand{\mm}{\mathfrak{m}}
\newcommand{\App}{\mathcal{A}_{+}^{-1}}
\newcommand{\A}{\mathcal{A}}
\newcommand{\B}{\mathcal{B}}
\newcommand{\Bpp}{\mathcal{B}_{+}^{-1}}
\newcommand{\Ap}{\mathcal{A}_+}
\newcommand{\Bp}{\mathcal{B}_+}
\newcommand{\bhp}{B(H)_+}
\newcommand{\M}{\mathcal{M}}
\newcommand{\E}{\tilde{E}}
\newcommand{\PSupp}{\operatorname{psupp}}
\newcommand{\Supp}{\operatorname{supp}}
\begin{document}% Do not forget this command!
%\issueinfo{vv}{n}{yyyy}
% \issueinfo{vv}{n}{yyyy}, vv is the volume, n is the number, yyyy is the year
% leave these information to fixed later by the editorial office
%\commby{Albrecht B\"{o}ttcher}% Editor is the name of the editor who accepted the article
%\pagespan{101}{144}
% \pagespan{bbb}{eee} where bbb is the beginning page, eee is the ending page
\date{April 13, 2021; to appear in Journal of Operator Theory}% This is the date of submission of the article; it
% will be fixed by the editorial office
%\revision{Month dd, yyyy}% This is date(s) of revision of the manuscript; it
% will be fixed by the editorial office
%\title[Transformations preserving the norm of means]{Transformations preserving the norm of means between positive cones of general and commutative $C^*$-algebras}% The title of the article
%\author[Dong, Li, Moln\'{a}r, {\protect \and} Wong]{Yunbai Dong, Lei Li, Lajos Moln\'{a}r, {\protect \and} Ngai-Ching Wong}% For multiple authors please use {\protec \and} sequence
%\address{Yunbai Dong, School of Mathematics and Computer, Wuhan Textile University, Wuhan 430073, China.}
%\email{baiyunmu301@126.com}
%\address{Lei Li, School of Mathematical Sciences and LPMC, Nankai University, Tianjin 300071, China.}
%\email{leilee@nankai.edu.cn}
%\address{Lajos Moln\'{a}r, Bolyai Institute, University of Szeged, H-6720 Aradi v\'{e}rtan\'{u}k tere1, Szeged, Hungary, \and Institute of Mathematics, Budapest University of Technology and Economics, H-1521 Budapest, Hungary.}
%\email{molnarl@math.u-szeged.hu}
%\address{Ngai-Ching Wong, Department of Applied Mathematics, National Sun Yat-sen University, Kaohsiung, Taiwan, \and School of Mathematical Sciences, Tiangong University, Tianjin 300387, China.}
%Department of Healthcare Administration and Medical Information, and Center of
%Fundamental Science, Kaohsiung  Medical University, 80708 Kaohsiung, Taiwan.}
%\email{wong@math.nsysu.edu.tw}

\setcounter{page}{1}

\title[Transformations preserving the norm of means]{Transformations preserving the norm of means between positive cones of general and commutative $C^*$-algebras}

\author[Dong]{Yunbai Dong}

\address[Dong]{School of Mathematics and Computer, Wuhan Textile University, Wuhan 430073, China.}

\email{baiyunmu301@126.com}

\author[Li]{Lei Li}
\address[Li]{School of Mathematical Sciences and LPMC, Nankai University,
Tianjin 300071, China.}

\email{leilee@nankai.edu.cn}

\author[Moln\'{a}r]{Lajos Moln\'{a}r}
\address[Moln\'{a}r]{Bolyai Institute, University of Szeged, H-6720 Aradi v\'{e}rtan\'{u}k tere1, Szeged, Hungary and
Budapest University of Technology and Economics, Institute of Mathematics, H-1521 Budapest, Hungary}
\email{molnarl@math.u-szeged.hu}

\author[Wong]{Ngai-Ching Wong}
\address[Wong]{Department of Applied Mathematics, National Sun Yat-sen University, Kaohsiung, Taiwan, \and School of Mathematical Sciences, Tiangong University, Tianjin 300387, China.}
\email{wong@math.nsysu.edu.tw}
%\dedicatory{This paper is dedicated to Professor ABCD}

\subjclass[2010]{Primary 47B49, 47B65, 46E15.}

\keywords{Preservers, means in $C^*$-algebras, norm-additive maps, Jordan $*$-isomorphisms, composition operators, order isomorphisms.}

\begin{abstract}
In this paper, we consider a (nonlinear) transformation $\Phi$ of invertible positive elements
in $C^*$-algebras which preserves the norm of any of the three fundamental means of positive elements; namely,
$\|\Phi(A)\mm \Phi(B)\| = \|A\mm B\|$, where $\mm$ stands for
the arithmetic mean $A\nabla B=(A+B)/2$,
the geometric mean $A\#B=A^{1/2}(A^{-1/2}BA^{-1/2})^{1/2}A^{1/2}$,
or the harmonic mean $A!B=2(A^{-1} + B^{-1})^{-1}$.
Assuming that $\Phi$ is surjective and preserves either the norm of the arithmetic mean or the norm of the geometric mean, we show that $\Phi$ extends to a Jordan $*$-isomorphism between the underlying full algebras.
If $\Phi$ is surjective and preserves the norm of the harmonic mean, then we obtain the same
conclusion in the   special cases where
the underlying algebras are $AW^*$-algebras or commutative $C^*$-algebras.

In the commutative case,  for a transformation $T: F(\mathrm{X})\subset C_0(\mathrm{X})_+\rightarrow C_0(\mathrm{Y})_+$,
we can relax the surjectivity assumption and show that $T$ is a generalized composition operator
if $T$ preserves the norm of the (arithmetic, geometric, harmonic, or in general any power) mean of any  finite collection of positive functions, provided that
the  domain $F(\mathrm{X})$ contains  sufficiently many elements to peak on compact $G_\delta$ sets.
When the image $T(F(\mathrm{X}))$ also contains sufficiently many elements
 to peak on compact $G_\delta$ sets,  $T$ extends to an algebra $*$-isomorphism
between the underlying full function algebras.
\end{abstract}
%\begin{subjclass}
%47B49, 47B65, 46E15. % or "Primary code0, Secondary code1, ...,coden."
%\end{subjclass}
%\begin{keywords}
%means in $C^*$-algebras, norm-additive maps, Jordan $*$-isomorphisms, composition operators, order isomorphisms.
%\end{keywords}
\maketitle

\section{Introduction}

The celebrated Mazur-Ulam theorem \cite{MU32}  states that
if $T$ is a surjective map  between   normed spaces $E$ and $F$ preserving the norm of differences, i.e.,
$
\|Tx-Ty\|=\|x-y\|,\enskip x,y\in E,
$
then  $T$ is automatically affine.
Being aware of this classical result,
one may ask what happens if $T$ preserves the norm of sums instead of differences, i.e., if
$
\|Tx+Ty\|=\|x+y\|, \enskip x,y\in E.
$
Such  transformations are called \textit{norm-additive} maps and were studied, e.g., in \cite{HosseiniFont,Gaal,chen, ton}.
In the above setting, this turns out to be an easy problem.
Indeed, inserting $-x$ into the place of $y$ above,
we have $T(-x)=-Tx$, and then immediately arrive at the original Mazur-Ulam theorem.

Observe that the previous argument does not work if the domain of $T$ is not the whole linear space.
This leads to the following open problem proposed in \cite{zhang}.

\begin{problem}\label{pr101}
Suppose that $E$, $F$ are ordered Banach spaces with positive cones $E_+,$ $F_+$, respectively. Let $T: E_+\rightarrow F_+$ be a surjective norm-additive map. Can $T$ be extended to a positive real-linear isometry from $E$ onto $F$?
\end{problem}

Affirmative answers to Problem \ref{pr101} are given for  $C(\mathrm{X})$ spaces in  \cite{chen}, and
for $L_p$ spaces when $p\in\, (1,\infty]$, with a counterexample   for the case $p=1$, in \cite{zhang}.
It also has a positive answer for von Neumann algebras in \cite{ML20b} and for noncommutative $L_p$ spaces whenever $p\in (1,\infty)$ in \cite{ZTW-nonLp}.
Bijective maps between positive definite cones of unital $C^*$-algebras which are norm-additive
with respect to a sort of Schatten $p$-norm for some $p\in\, (1,\infty)$ were described in \cite{Gaal}.

A norm-additive map can obviously be viewed as a transformation which preserves the norm of the arithmetic mean of pairs of elements. If,
on the positive definite cone $\App$ of a unital $C^*$-algebra $\A$,
we replace the arithmetic mean by the geometric mean and the $C^*$-norm by the trace norm
(corresponding to a faithful tracial positive linear functional), then we have got a quantity which is a
variant of quantum R\'enyi relative entropy.  Hence, the maps which preserve that quantity are kinds of
quantum symmetries, and the description of them is presented in  \cite{ML19c}.

These provide us   motivations to study  surjective maps between positive  cones of
$C^*$-algebras which preserve the norm of any of the three most fundamental means, namely,
\begin{itemize}
  \item the \textit{arithmetic mean} $A\nabla B=(A+B)/2$,
  \item the \textit{geometric mean} $A\#B=A^{1/2}(A^{-1/2}BA^{-1/2})^{1/2}A^{1/2}$, and
  \item the \textit{harmonic mean} $A!B=2(A^{-1} + B^{-1})^{-1}$.
\end{itemize}
(See \cite{KubAnd80} for details and other important Kubo-Ando means.)
In this general setting, we provide the following affirmative answer to Problem \ref{pr101},
which consists of Theorems \ref{T:2}, \ref{thm:GM} and \ref{thm:gen-hm} in
Section \ref{S:2}, for the important class of abstract $C^*$-algebras.
Note that all such surjective maps of positive elements preserve the norm, and positive  surjective linear
isometries between $C^*$-algebras are exactly Jordan $*$-isomor\-phisms due to Kadison's theorem \cite{Kad}.

\begin{theorem}\label{T:3all}
Let $\A,\B$ be unital $C^*$-algebras, and let
$\phi:\App\to \Bpp$ be a surjective map (not necessarily additive or positive homogeneous) between their positive definite cones.
Then $\phi$ satisfies
\begin{equation*}\label{E:200}
\|\phi(A)\,\mm\,\phi(B)\|=\|A\,\mm\, B\|, \quad A,B\in \App,
\end{equation*}
where $\mm$ is the arithmetic mean or the geometric mean,
if and only if there is a Jordan $*$-isomorphism $J:\A\to \B$ which extends $\phi$, i.e., $\phi(A)=J(A)$ holds  for all $A\in \App$.
A similar assertion holds for the harmonic mean in the special case when $\A,\B$ are both $AW^*$-algebras or they are
both commutative $C^*$-algebras.
\end{theorem}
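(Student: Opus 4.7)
The ``if'' direction is immediate: a Jordan $*$-isomorphism $J:\A\to\B$ is an isometry by Kadison's theorem and preserves the involution, the order, and the Jordan triple product $ABA$, whence $J(A\,\mm\, B)=J(A)\,\mm\, J(B)$ on $\App$ for each of the three Kubo--Ando means, and in particular $\|J(A)\,\mm\, J(B)\|=\|A\,\mm\, B\|$. For the converse, a uniform first observation is that each mean satisfies $X\,\mm\, X=X$, so the hypothesis $\|\phi(A)\,\mm\,\phi(B)\|=\|A\,\mm\, B\|$ immediately yields $\|\phi(A)\|=\|A\|$ on $\App$. The three cases are then handled separately; below is how I would attack each.

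For the \emph{arithmetic mean}, the hypothesis reads $\|\phi(A)+\phi(B)\|=\|A+B\|$, i.e.\ $\phi$ is norm-additive on the positive definite cone. The plan is to exploit the pure-state representation $\|A\|=\sup\{\rho(A):\rho\text{ a pure state on }\A\}$ together with the fact that the equality $\|A+B\|=\|A\|+\|B\|$ forces the existence of a common pure state at which both $A$ and $B$ attain their norms. This should yield a homeomorphism $\tau:P(\B)\to P(\A)$ between the pure state spaces such that $\rho(\phi(A))=\tau(\rho)(A)$ whenever $\rho$ peaks on $\phi(A)$; scaling $A\mapsto tA$ for $t>0$ (preserved by norm-additivity on $\App$) then propagates this identity to full positive linearity of $\phi$ on $\App$, and Kadison's theorem delivers the Jordan $*$-isomorphism extension.

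For the \emph{geometric mean}, no linear structure is available, so the approach is spectral. I would rely on the Riccati characterization of $A\# B$ as the unique positive solution of $XA^{-1}X=B$, and on the sharp inequality $\|A\# B\|\le\sqrt{\|A\|\,\|B\|}$, analyzing its equality case (which in the scalar picture forces $A,B$ to peak at a common point). This should let me detect which pure states peak simultaneously on $A,B$, and hence on $\phi(A),\phi(B)$. Reducing to a pointwise picture via GNS representations at pure states, I would assemble the spectral data into a Jordan $*$-isomorphism. For the \emph{harmonic mean} special case, I would use the duality $A\mathbin{!}B=(A^{-1}\nabla B^{-1})^{-1}$: the norm $\|A\mathbin{!}B\|$ equals twice the reciprocal of the spectral infimum of $A^{-1}+B^{-1}$, and this infimum-type quantity gives spectral data accessible through spectral projections (abundant in an $AW^*$-algebra) or through point evaluations (in the commutative case). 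In both settings this rigidity suffices to reconstruct the Jordan $*$-isomorphism along the same lines as for the geometric mean.

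The principal obstacle, I expect, is the geometric and harmonic mean cases, where the absence of linearity prevents one from reading off $\phi$ directly from the linear state-space structure. The delicate step is showing that $\phi$ preserves the pure-state peak structure of pairs $(A,B)$ and, consequently, the commutativity of pairs, so as to reduce the problem to a commutative setting where the spectral data can be aggregated into a Jordan $*$-isomorphism via Kadison's theorem. The restriction to $AW^*$-algebras or commutative $C^*$-algebras in the harmonic mean statement reflects precisely where this reduction is known to go through.
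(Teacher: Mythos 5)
There is a genuine gap, and it sits at the heart of all three cases: your plan extracts from the norm identity only \emph{peak-state} information, i.e.\ data about $\phi(A)$ at the (pure) states where it attains its norm, and then asserts that this ``propagates to full positive linearity.'' No mechanism for that propagation is given, and the paper's own Example \ref{eg:2notn} shows that pairwise norm-additivity together with complete knowledge of the peak values does \emph{not} determine the map even in the commutative case; surjectivity must enter in an essential way, and your sketch never uses it. In particular, positive homogeneity $\phi(tA)=t\phi(A)$ is not a formal consequence of $\|\phi(tA)\|=t\|A\|$: the paper needs a separate, rather delicate argument (Lemma \ref{L:3} inside Lemma \ref{lem:key}) to obtain it. The paper's actual route is different and is worth internalizing: from the norm-of-mean identity one first recovers the \emph{partial order} ($A\leq B$ iff $\|A\mm X\|\leq\|B\mm X\|$ for all $X$; Lemma \ref{L:1} for $\nabla$, Lemma 9 of \cite{CMM} for $\#$, Lemma \ref{lem:HM-order} for $!$) and then \emph{orthogonality} (Lemma \ref{L:2}, resp.\ Lemma \ref{lem:hm-ortho}); a surjective norm-, order- and orthogonality-preserving map is then shown to be positively homogeneous and is classified by the Hatori--Moln\'ar structure theorem for positive homogeneous order isomorphisms (Proposition \ref{prop-Monlar}). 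Your pure-state correspondence $\tau:P(\B)\to P(\A)$ is also problematic in the noncommutative setting, where the pure state space is badly behaved topologically and a mere bijection of peaking states does not encode the Jordan structure.

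For the geometric and harmonic means your proposal is a collection of true identities (the Riccati characterization, $\|A\#B\|\leq\sqrt{\|A\|\,\|B\|}$, $\|A!B\|=2/\min\operatorname{spec}(A^{-1}+B^{-1})$) followed by ``assemble the spectral data into a Jordan $*$-isomorphism,'' which is not an argument. You also misdiagnose the reason for the restriction in the harmonic-mean case: it is not that a commutative reduction fails, but that the order-characterization step still works (Lemma \ref{lem:HM-order}) while the positive-homogeneity step does not, so the authors must instead invoke structure theorems for order isomorphisms \emph{without} any homogeneity hypothesis, which are currently available only for commutative $C^*$-algebras (Sch\"affer, Proposition \ref{prop:Schaffer}) and for $AW^*$-/von Neumann algebras (via Lemma \ref{lem:hm-ortho} and Lemma \ref{lem:key}, resp.\ Mori's theorem).
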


Without the surjectivity assumption above, even a norm-additive map $\phi: C_0(\mathrm{X})_+\to C_0(\mathrm{Y})_+$ between positive cones of abelian $C^*$-algebras
can carry no linearity structure.  To see this, we first recall
the following locally compact version %\cite{JW96}
of Holszty\'{n}ski's nonsurjective Banach-Stone theorem \cite{hol}.

\begin{proposition}[{Jeang and Wong \cite{JW96}}]\label{th102}
Let $\mathrm{X}, \mathrm{Y}$ be locally compact Hausdorff spaces. Suppose $T: C_0(\mathrm{X})\rightarrow C_0(\mathrm{Y})$ is a (not necessarily surjective) linear isometry. Then there is a locally compact subset $\mathrm{Y}_0$ of $\mathrm{Y}$, a surjective continuous function $\tau$ from $\mathrm{Y}_0$ onto $\mathrm{X}$, and a continuous
unimodular scalar valued function $g$ on $\mathrm{Y}_0$ such that
\begin{align}\label{eq:wco}
(Tf)(y)=g(y)\cdot f(\tau (y)), \quad y\in \mathrm{Y}_0, f\in C_0(\mathrm{X}).
\end{align}
\end{proposition}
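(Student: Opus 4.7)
The plan is to translate the problem to the dual level and exploit the extreme-point structure of $C_0(\mathrm{X})^*\cong M(\mathrm{X})$, the space of finite regular complex Borel measures. Let $T^*\colon M(\mathrm{Y})\to M(\mathrm{X})$ be the adjoint of $T$. Since $T$ is a linear isometry, the Hahn--Banach theorem yields that $T^*$ maps the closed unit ball $B_{M(\mathrm{Y})}$ \emph{onto} $B_{M(\mathrm{X})}$. By the Arens--Kelley theorem, the extreme points of $B_{M(\mathrm{X})}$ are precisely the unimodular multiples $\lambda\delta_x$ of point masses, and likewise for $B_{M(\mathrm{Y})}$.

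The key extremal lifting is as follows: for any extreme point $\lambda\delta_x\in B_{M(\mathrm{X})}$, the fibre $(T^*)^{-1}(\lambda\delta_x)\cap B_{M(\mathrm{Y})}$ is a nonempty weak-$*$ compact convex set, hence by Krein--Milman has an extreme point, which a standard argument shows must also be extreme in $B_{M(\mathrm{Y})}$. Thus for each $x\in \mathrm{X}$ there exist $y\in \mathrm{Y}$ and unimodular $g(y)$ such that $T^*\delta_y=g(y)\delta_x$. I would then define
\[
\mathrm{Y}_0=\{y\in \mathrm{Y}:T^*\delta_y\text{ is an extreme point of }B_{M(\mathrm{X})}\},
\]
and for $y\in \mathrm{Y}_0$ set $\tau(y)\in \mathrm{X}$ and $|g(y)|=1$ via the unique decomposition $T^*\delta_y=g(y)\delta_{\tau(y)}$. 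The construction above shows that $\tau:\mathrm{Y}_0\to \mathrm{X}$ is surjective, and the weighted composition formula is immediate:
\[
(Tf)(y)=\langle\delta_y,Tf\rangle=\langle T^*\delta_y,f\rangle=g(y)f(\tau(y))\quad (y\in \mathrm{Y}_0, f\in C_0(\mathrm{X})).
\]

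The remaining issue, which I expect to be the main obstacle, is topological. Continuity of $\tau$ on $\mathrm{Y}_0$ comes from the weak-$*$ continuity of $T^*$: for any net $y_\alpha\to y$ in $\mathrm{Y}_0$, the convergence $T^*\delta_{y_\alpha}\to T^*\delta_y$ tested against functions in $C_0(\mathrm{X})$ peaking near $\tau(y)$ forces $\tau(y_\alpha)\to\tau(y)$; continuity of $g$ then follows by choosing any $f\in C_0(\mathrm{X})$ with $f(\tau(y))\neq 0$. The genuinely delicate point, peculiar to the nonunital locally compact setting and absent from Holszty\'nski's original compact version, is verifying that $\mathrm{Y}_0$ is locally compact in the topology inherited from $\mathrm{Y}$. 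One handles this by giving an intrinsic description of $\mathrm{Y}_0$, for example as $\{y\in \mathrm{Y}:\|T^*\delta_y\|=1\}$, and exhibiting it as a suitably structured subset of $\mathrm{Y}$ by approximating indicator functions with $C_0(\mathrm{X})$-bumps; this is essentially the content of the argument in \cite{JW96}. Once $\mathrm{Y}_0$ is so identified, restricting $\tau$ and $g$ to it yields the asserted representation.
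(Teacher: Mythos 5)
Note first that the paper does not prove Proposition \ref{th102} at all: it is imported from \cite{JW96} as a known locally compact version of Holszty\'nski's theorem, so there is no in-paper proof to measure you against. Your duality route is a legitimate and in fact standard alternative to the peak-set technique that \cite{JW96} (and, for the nonlinear analogues, Section \ref{S:3} of this paper) uses: the surjectivity of $T^*$ onto $B_{M(\mathrm{X})}$ via Hahn--Banach, the extremal lifting through the weak-$*$ compact convex fibre over $\lambda\delta_x$, the Arens--Kelley identification of extreme points with unimodular point masses, and the resulting surjectivity of $\tau$, the representation formula, and the continuity of $\tau$ and $g$ on $\mathrm{Y}_0$ are all correct as you describe them.

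The genuine gap is precisely where you defer, and your proposed repair does not work: the set $\{y\in \mathrm{Y}:\|T^*\delta_y\|=1\}$ is in general strictly larger than your $\mathrm{Y}_0$. Take $\mathrm{X}=\{x_1,x_2\}$, $\mathrm{Y}=\{y_1,y_2,y_3\}$, $(Tf)(y_i)=f(x_i)$ for $i=1,2$ and $(Tf)(y_3)=\tfrac12 f(x_1)+\tfrac12 f(x_2)$; this is a linear isometry with $\|T^*\delta_{y_3}\|=1$, yet $T^*\delta_{y_3}$ is not extreme (this is the same phenomenon as in Example \ref{eg:Supp-not-PSupp}, where $\PSupp_T(1)$ strictly contains $\Supp_T(1)$). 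So that intrinsic description cannot be used to exhibit $\mathrm{Y}_0$ as a structured subset of $\mathrm{Y}$. A correct argument mimics the local compactness step in the proof of Theorem \ref{th301}: given $y_1\in \mathrm{Y}_0$ with $x_1=\tau(y_1)$, choose $f_1\in C_0(\mathrm{X})$ with $0\leq f_1\leq 1$ and $f_1(x_1)=1$, and work inside the open set $W=\{y\in \mathrm{Y}: |(Tf_1)(y)|>1/2\}$, which contains $y_1$. For every $y\in \mathrm{Y}_0\cap W$ one has $f_1(\tau(y))=|(Tf_1)(y)|>1/2$, so $\tau(\mathrm{Y}_0\cap W)$ lies in the compact set $\overline{\{f_1>1/2\}}$. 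If $K$ is a compact neighbourhood of $y_1$ contained in $W$ and a net $\{y_\mu\}$ in $\mathrm{Y}_0\cap K$ converges to $\widetilde{y}\in K$, then after passing to subnets $\tau(y_\mu)\to \widetilde{x}\in \mathrm{X}$ (no escape to infinity is possible) and $g(y_\mu)\to \widetilde{g}$ with $|\widetilde{g}|=1$, whence $T^*\delta_{\widetilde{y}}=\widetilde{g}\,\delta_{\widetilde{x}}$ is extreme and $\widetilde{y}\in \mathrm{Y}_0$. Thus $\mathrm{Y}_0\cap K$ is a compact neighbourhood of $y_1$ in $\mathrm{Y}_0$, and $\mathrm{Y}_0$ is locally compact. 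With this replacement your proof is complete.
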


We call maps of the form \eqref{eq:wco} \textit{generalized weighted composition operators} and, in the cases where $g\equiv 1$, \textit{generalized composition operators}.
The word ``generalized'' refers to the fact that, in general, $\mathrm{Y}_0$ is only a subset of $\mathrm{Y}$.
In fact, although $\mathrm{Y}_0$ is big enough such that $\tau(\mathrm{Y}_0)=\mathrm{X}$,
outside the `good part' $\mathrm{Y}_0$ of $\mathrm{Y}$, the behavior of the linear isometry $T$
is totally out of control (except that $|Tf(y)|\leq \|f\|$ holds for all $y\in \mathrm{Y}\setminus \mathrm{Y}_0$).
If $\mathrm{Y}_0$ happens to be the whole space $\mathrm{Y}$, then we call the above maps \textit{weighted composition operators} and \textit{composition operators}, respectively.

In view of Proposition \ref{th102}, we might expect that a nonsurjective norm-additive map between positive cones of
$C_0(\mathrm{X})$-spaces can be extended to a generalized composition operator.
Indeed, Tonev and Yates \cite{ton} consider  such maps between uniform algebras and give sufficient conditions to
ensure them being composition operators.   See also, among others, \cite{Sady1,HosseiniFont,Sady2}.
However, we have the following example of a norm-additive map between positive cones of continuous functions
which is not a generalized composition operator.

\begin{example}\label{eg:2notn}
Let $\mathrm{X}=\{x_0\}$ be a single point space, and identify the cone $C(\mathrm{X})_{+}^{-1}$ of all
strictly positive functions on $\mathrm{X}$ with the interval $(0,\infty)$.
Let
$$
A_1=\, (0,1],\quad A_2=\, (1,2], \quad\text{and}\quad A_3=\, (2,\infty).
$$
Define $\varphi_i:\, (0,\infty)\,\longrightarrow\, (0,\infty)$ by
\begin{align}\varphi_i(t) =\left\{\begin{array}{ll}
t/2,&\,\,\,\,  t\in A_i,\\
t,&\,\,\,\, t\notin A_i,\end{array} \quad i=1,2,3.
\right. \nonumber\end{align}
We construct a map $T:\, (0,\infty)\,\longrightarrow C([0,1])_{+}^{-1}$ by
 \begin{align}(Tt)(y)=\left\{\begin{array}{ll}
(1-2y)\varphi_1(t)+2y\varphi_2(t),&\quad  y\in[0,1/2],\\
(2-2y)\varphi_2(t)+(2y-1)\varphi_3(t),&\quad y\in \, (1/2,1],\end{array}\right.\nonumber
\end{align}
for all $t\in \, (0, \infty)$.
Then one can easily check that $T$ is norm-additive but there is not any point
$y_0\in [0,1]$ such that $(Tt)(y_0)=t$ holds for all $t\in \, (0,\infty)$. Hence $T$ is not a generalized composition operator as in
\eqref{eq:wco}; note that $\tau(y)=x_0$, and because $T$ preserves the norm,  one must have there $g(y)=1$ for all $y\in Y_0$.
\end{example}

In Section \ref{S:3},
 we offer a way to fix this issue. Namely, we define
\textit{finitely norm-additive} maps, that is, those transformations $T$ which satisfy
$$\|T x_1+T x_2+\cdots +T x_n\|=\|x_1+x_2+\cdots +x_n\|$$ for all elements $x_1,x_2,\ldots ,x_n$ of their domains and for any $n\in \mathbb{N}$.
Apparently, this is the same as to assume that $T$ preserves the norm of the arithmetic mean of any finite collection of elements,
 that is,
\[
\left\|\frac{T x_1+T x_2+\cdots + T x_n}{n}\right\|=\left\|\frac{x_1+x_2+\cdots +x_n}{n}\right\|,\quad  x_1,x_2,\ldots x_n,\,  n\in \mathbb{N}.
\]
We will see in Theorem \ref{th301} that such maps are necessarily generalized composition operators.
Referring back to Example \ref{eg:2notn}, observe that the map $T$ defined there,
though being norm-additive, it is not finitely norm-additive. Indeed, one can check that
$(T1+T2+T3)(y)={11/2}-y$ for all $y\in [0,1]$, and hence $\|T1+T2+T3\|={11/2}\neq 6=1+2+3=\|T_1\|+\|T_2\|+\|T_3\|$.

With the success in describing the maps preserving the norm of the arithmetic mean in Theorem  \ref{th301}, we also study
maps preserving the norm of the geometric mean, or
the norm of the harmonic mean, of any finite collections of positive functions.
We arrive at similar conclusions for these two important means in Theorems \ref{thG01} and \ref{thH01}, respectively.

One can define other means of finitely many positive functions.
Classical examples include the power means $\mm^p$ defined by
$$
f_1 \,\mm^p\, f_2 \,\mm^p\, \cdots \mm^p f_n = \left(\frac 1n(f_1^p + f_2^p +\cdots + f_n^p)\right)^{1/p}.
$$
When $p <0$, we should consider strictly positive functions on
compact Hausdorff spaces only.  The power means cover the arithmetic mean with $p=1$ and the harmonic
mean with $p=-1$.  By convention we set $\mm^0$ to be the geometric mean, and we note that $\lim_{p\to 0} \mm^p=\mm^0$.

Summarizing and extending Theorems \ref{th301}, \ref{thG01} and  \ref{thH01},
the following result covers the cases of all power means,
 especially the arithmetic, geometric and harmonic means.
Here by saying that  $F(\mathrm{X})\subset C_{0}(\mathrm{X})_+$
\textit{contains sufficiently many functions to peak on   compact $G_\delta$ subsets of $\mathrm{X}$}, we
mean that for any nonempty compact $G_\delta$ subset $K$ of $\mathrm{X}$, there is an $f$ in $F(\mathrm{X})$ such that
$K=\{x\in \mathrm{X}  : f(x)=\|f\|\}$.

\begin{theorem}\label{thm:p-means}
When $0\leq p< \infty$ (resp.\ $-\infty < p<0$), let $\mathrm{X},\mathrm{Y}$ be locally compact (resp.\ compact) Hausdorff spaces,
and let $F(\mathrm{X})$ be a subset of $C_0(\mathrm{X})_{+}$
(resp.\  $C_0(\mathrm{X})_{+}^{-1}$)
containing sufficiently many functions to peak on  compact $G_\delta$ subsets of $\mathrm{X}$.
Let  $T: F(\mathrm{X}) \rightarrow C_{0}(\mathrm{Y})_{+}$ (resp.\  $T: F(\mathrm{X}) \rightarrow  C_{0}(\mathrm{Y})_{+}^{-1}$)
be a (not necessarily additive, positive homogeneous or surjective) map preserving the norm of the power mean $\mm^p$
of any finite collection of elements in $F(\mathrm{X})$; namely,
$$
\|((Tf_1)^p +\cdots + (Tf_n)^p))^{1/p}\| = \|((f_1^p +\cdots + f_n^p)^{1/p}\|,
$$
for all $f_1, \ldots, f_n\in F(\mathrm{X}), n=1,2, \ldots$. Then there exist a locally compact
subset $\mathrm{Y}_0$ of $\mathrm{Y}$ and a surjective continuous map $\tau: \mathrm{Y}_0\rightarrow \mathrm{X}$ such that
\begin{align*}%\label{ineqG3222}
(Tf)(y)= f(\tau (y)), \quad y\in \mathrm{Y}_0, f\in F(\mathrm{X}).
\end{align*}
In particular, $T$ is injective. In the case when $\mathrm{X}$ is compact, $\mathrm{Y}_0$ is a closed subset of $\mathrm{Y}$.

If the range $T(F(\mathrm{X}))$ of $T$ also contains sufficiently many functions to peak on  compact $G_\delta$ subsets of $\mathrm{Y}$, then we have $\mathrm{Y}_0=\mathrm{Y}$ and $\tau$ is a homeomorphism from $\mathrm{Y}$ onto $\mathrm{X}$.
\end{theorem}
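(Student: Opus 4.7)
The plan is to reduce the general power-mean case to the three cases already handled in this paper: Theorem \ref{th301} for the arithmetic mean ($p=1$), Theorem \ref{thG01} for the geometric mean ($p=0$), and Theorem \ref{thH01} for the harmonic mean ($p=-1$). For these three values of $p$ there is nothing to prove beyond invoking the cited results, so it suffices to treat $p\neq 0,\pm 1$ by an appropriate change of variable. The key observation is that for $p\neq 0$ the map $\psi_p:f\mapsto f^p$ is a bijection on positive (respectively strictly positive) cones that preserves the peaking property, since $x\mapsto x^p$ is strictly monotone on $(0,\infty)$. In particular, $\psi_p(F(\mathrm{X}))$ contains sufficiently many functions to peak on compact $G_\delta$ subsets of $\mathrm{X}$ whenever $F(\mathrm{X})$ does.

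Accordingly, I would define $S:\psi_p(F(\mathrm{X}))\to C_0(\mathrm{Y})_{+}$ (or $C_0(\mathrm{Y})_+^{-1}$ when $p<0$) by $S(g)=(T(g^{1/p}))^p$. When $p>0$, the identity $\|h^{1/p}\|_\infty=\|h\|_\infty^{1/p}$ for positive $h$ shows that preservation of $\|\mm^p(f_1,\dots,f_n)\|$ by $T$ is equivalent, after raising to the $p$-th power, to
\[
\|S(g_1)+\cdots+S(g_n)\|=\|g_1+\cdots+g_n\|
\]
for all $g_i\in\psi_p(F(\mathrm{X}))$. Hence $S$ is finitely norm-additive, and Theorem \ref{th301} produces a locally compact $\mathrm{Y}_0\subset\mathrm{Y}$ and a continuous surjection $\tau:\mathrm{Y}_0\to\mathrm{X}$ with $S(g)(y)=g(\tau(y))$ on $\mathrm{Y}_0$. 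Pulling back via $g=f^p$ gives $T(f)(y)^p=f(\tau(y))^p$, hence $T(f)(y)=f(\tau(y))$.

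For $p<0$ (so $\mathrm{X},\mathrm{Y}$ are compact), $x\mapsto x^{1/p}$ is decreasing, so $\|h^{1/p}\|_\infty=(\inf h)^{1/p}$. Using $\inf h=1/\|1/h\|$ on strictly positive continuous $h$, and recognising that the reciprocal of the arithmetic mean of $f_i^p=1/f_i^{|p|}$ is (up to the convention factor appearing in $\mm^{-1}$) the harmonic mean of the functions $f_i^{|p|}$, one converts preservation of $\|\mm^p\|$ by $T$ into preservation of $\|\mm^{-1}(g_1,\dots,g_n)\|$ by the analogously defined $S$ on $\psi_p(F(\mathrm{X}))$. Theorem \ref{thH01} then yields the analogous $\mathrm{Y}_0$ and $\tau$, and the same pullback again gives $T(f)(y)=f(\tau(y))$ on $\mathrm{Y}_0$.

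The supplementary conclusions—that $\mathrm{Y}_0$ is closed when $\mathrm{X}$ is compact, and that $\mathrm{Y}_0=\mathrm{Y}$ with $\tau$ a homeomorphism when $T(F(\mathrm{X}))$ also contains enough peaking functions—are inherited directly from the cited theorems, using that $\psi_p(T(F(\mathrm{X})))=S(\psi_p(F(\mathrm{X})))$ again has the peaking property. The main obstacle I anticipate is arranging the $p<0$ norm calculation cleanly: the passage from sup to inf and the correct reindexing that turns the reciprocal of an arithmetic mean into a harmonic mean require careful bookkeeping, but once that is done the reduction is essentially mechanical and no new ideas beyond the three referenced theorems are needed.
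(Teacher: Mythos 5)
Your proposal is correct and is essentially identical to the paper's own (very short) proof: the paper likewise reduces $p>0$ to the arithmetic-mean case via $f\mapsto (T f^{1/p})^p$ and $p<0$ to the harmonic-mean case via $f\mapsto (T f^{-1/p})^{-p}$, then invokes Theorems \ref{th301}, \ref{thG01} and \ref{thH01}. Just take care in the $p<0$ bookkeeping that the conjugating substitution is $g=f^{|p|}$ (not $f^{p}$), so that the transformed map preserves the norm of the harmonic mean of the $f_i^{|p|}$ and the peaking property is preserved by the increasing function $x\mapsto x^{|p|}$.
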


The following Corollary \ref{cor:3means-sphere}
is a consequence of Theorem \ref{thm:p-means}.
This may be an interesting result  to those  working on
the recently popular Tingley's problem (see, e.g., \cite{P18AM, P18TAMS, P18JMAA, P18}).  For example,  the Open Problem 5 in \cite{P18}
asks if a surjective map between the positive unit spheres
of two $C^*$-algebras preserving the norm of differences can be extended to a surjective isometry between the whole spaces.
Here, we consider the norm of mean preserver version of the problem for the case of commutative $C^*$-algebras.

Let $\mathrm{X}$ be a locally compact Hausdorff space.
Let $C_{0}(\mathrm{X})_{+,1}$  be the positive unit sphere consisting of
norm one positive continuous functions on $\mathrm{X}$.
When $\mathrm{X}$ is compact, let
$C(\mathrm{X})_{+,1}^{-1}=C_{0}(\mathrm{X})_{+,1}\cap C_{0}(\mathrm{X})^{-1}_{+}$
be the positive definite unit sphere of $C(\mathrm{X})$.

\begin{corollary}\label{cor:3means-sphere}
Let $\mathrm{X}$, $\mathrm{Y}$ be locally compact (resp.\ compact) Hausdorff spaces.
Assume that  $T:C_{0}(\mathrm{X})_{+,1}\rightarrow C_{0}(\mathrm{Y})_{+,1}$
(resp.\  $T:C(\mathrm{X})^{-1}_{+,1}\rightarrow C(\mathrm{Y})^{-1}_{+,1}$)
is a map
preserving the norm of the power mean $\mm^p$ with $p\geq 0$, especially the
arithmetic or the geometric mean (resp.\ with $p<0$, especially the harmonic mean)  of arbitrary finitely many elements.
Then there exist a locally compact subset
$\mathrm{Y}_0$ of $\mathrm{Y}$, which is closed if $\mathrm{X}$ is compact,
and a surjective continuous map $\tau: \mathrm{Y}_0\rightarrow \mathrm{X}$ such that
\begin{align}\label{eg:Tingley-rep}
(Tf)(y)= f(\tau (y)), \quad y\in \mathrm{Y}_0,\ f\in C_{0}(\mathrm{X})_{+,1}\ \ (\text{resp.}\ f\in C(\mathrm{X})^{-1}_{+,1}).
\end{align}
If  $T$ has a dense range, then $\mathrm{Y}_0=\mathrm{Y}$ and $\tau$ is a homeomorphism from $\mathrm{Y}$ onto $\mathrm{X}$, and thus $T$ extends to an algebra $*$-isomorphism between the full function algebras.
\end{corollary}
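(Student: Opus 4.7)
The plan is to reduce the corollary to Theorem \ref{thm:p-means} by first verifying that the domain contains sufficiently many peaking functions, and then showing under the dense range hypothesis that the set $\mathrm{Y}_0$ supplied by Theorem \ref{thm:p-means} coincides with $\mathrm{Y}$ and $\tau$ becomes a homeomorphism. Given a nonempty compact $G_\delta$ set $K = \bigcap_n U_n$ in $\mathrm{X}$, local compactness lets us take each $U_n$ to sit inside a fixed compact neighborhood $L$ of $K$; Urysohn's lemma then yields $g_n \in C_c(\mathrm{X})$ with $0 \leq g_n \leq 1$, $g_n \equiv 1$ on $K$, and $\mathrm{supp}\, g_n \subset U_n$. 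The function $f := \sum_{n\ge 1} 2^{-n} g_n$ lies in $C_0(\mathrm{X})_+$ with $\|f\| = 1$ and $f^{-1}(1) = K$; in the compact case, $(f + \mathbf{1})/2$ is strictly positive and has the same peaking set. Hence both $C_0(\mathrm{X})_{+,1}$ and $C(\mathrm{X})_{+,1}^{-1}$ contain sufficiently many peaking functions, and Theorem \ref{thm:p-means} supplies $\mathrm{Y}_0$, the surjective continuous map $\tau: \mathrm{Y}_0 \to \mathrm{X}$, and the representation \eqref{eg:Tingley-rep}, establishing the first half of the corollary.

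For the second half, the key observation is that every $Tf$ attains its norm at a point of $\mathrm{Y}_0$: as $f$ achieves its norm $1$ at some $x_0 \in \mathrm{X}$ and $\tau(\mathrm{Y}_0) = \mathrm{X}$, any $y_1 \in \tau^{-1}(x_0)$ satisfies $Tf(y_1) = f(x_0) = 1 = \|Tf\|$. If $\overline{\mathrm{Y}_0} \neq \mathrm{Y}$, pick $y^\ast \in \mathrm{Y} \setminus \overline{\mathrm{Y}_0}$ and use the same Urysohn-type construction (separating $\{y^\ast\}$ from the closed set $\overline{\mathrm{Y}_0}$ in the locally compact Hausdorff space $\mathrm{Y}$, and adding a small constant in the compact positive definite case) to produce $h$ in the target positive unit sphere with $h(y^\ast) = 1$ and $h < 1/4$ on $\overline{\mathrm{Y}_0}$. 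Dense range gives $f$ with $\|Tf - h\| < 1/4$, and evaluating at a point of $\mathrm{Y}_0$ where $Tf = 1$ forces $|1 - h| < 1/4$ there, contradicting $h < 1/4$ on $\overline{\mathrm{Y}_0}$. Thus $\overline{\mathrm{Y}_0} = \mathrm{Y}$.

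I would then extend $\tau$ continuously to $\widetilde{\tau}: \mathrm{Y} \to \mathrm{X}$. For any net $(y_\alpha) \subset \mathrm{Y}_0$ with $y_\alpha \to y$, continuity of each $Tf$ yields $f(\tau(y_\alpha)) \to Tf(y)$ for all $f$ in the domain; these $f$ separate points of $\mathrm{X}$, and in the non-compact case, dense range of $T$ prevents $\tau(y_\alpha)$ from escaping to infinity (otherwise $Tf(y) = 0$ for all $f$, contradicting approximability of target functions positive at $y$). So $\tau(y_\alpha)$ converges to a unique $\widetilde{\tau}(y) \in \mathrm{X}$, and $Tf = f \circ \widetilde{\tau}$ throughout $\mathrm{Y}$. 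Injectivity of $\widetilde{\tau}$ comes from the same density-plus-separation reasoning inside $C_0(\mathrm{Y})$; promoting the continuous bijection $\widetilde{\tau}$ to a homeomorphism is then standard, for instance via the induced surjective linear isometry $g \mapsto g \circ \widetilde{\tau}: C_0(\mathrm{X}) \to C_0(\mathrm{Y})$ and Banach--Stone. The composition $J(g) := g \circ \widetilde{\tau}$ is then an algebra $*$-isomorphism $C_0(\mathrm{X}) \to C_0(\mathrm{Y})$ that extends $T$.

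I expect the main obstacle to lie in the non-compact case: ruling out escape of $\tau(y_\alpha)$ to infinity and upgrading the continuous bijection $\widetilde{\tau}$ to a homeomorphism both require carefully exploiting dense range within the restricted setting of the positive unit sphere, where only norm-one functions are available and closure under scalars or addition is unavailable.
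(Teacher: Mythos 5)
Your proposal is correct, and for the dense-range half it takes a genuinely different route from the paper. The paper's proof first upgrades dense range to actual surjectivity of $T$ onto the positive unit sphere: using the representation \eqref{eg:Tingley-rep} it shows $\|f_n-f_m\|\leq\|Tf_n-Tf_m\|$, so approximating sequences in the range come from Cauchy sequences in the domain, whose limits map onto the target once $\mathrm{Y}_0$ is shown to be dense (the density argument there is close to yours: a norm-one function vanishing on $\mathrm{Y}_0$ would be approximated by $Tf_n$ with $f_n\to f$ forcing $f=0$); with $T$ onto, the range equals $C_0(\mathrm{Y})_{+,1}$, which peaks on compact $G_\delta$ sets, so the already-proved last clause of Theorem \ref{thm:p-means} delivers $\mathrm{Y}_0=\mathrm{Y}$ and the homeomorphism for free, including the extra strict-positivity check on the limit $f$ in the harmonic case. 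You instead bypass surjectivity of $T$ entirely: your norm-attainment argument for density of $\mathrm{Y}_0$ is a clean variant that avoids the Cauchy-sequence step (and hence the strict-positivity issue), and you then re-derive by hand that $\mathrm{Y}_0=\mathrm{Y}$ (via subnets in $\mathrm{X}\cup\{\infty\}$, with dense range ruling out escape to infinity), that $\tau$ is injective (dense range plus point separation in $C_0(\mathrm{Y})_{+,1}$), and that the continuous bijection $\tau$ is open. That last upgrade is the one step you should make explicit in the noncompact case: you need the induced map $g\mapsto g\circ\tau$ to land in $C_0(\mathrm{Y})$ (which follows since every $g\in C_0(\mathrm{X})$ is a combination of positive multiples of elements of $C_0(\mathrm{X})_{+,1}$, whose compositions with $\tau$ are the $Tf\in C_0(\mathrm{Y})$) and to be onto $C_0(\mathrm{Y})$ (which is exactly where dense range enters again, either directly or via Stone--Weierstrass), after which Banach--Stone or Gelfand duality finishes. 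Both approaches are valid; the paper's is shorter because it recycles the nonsurjective theorem's final clause, while yours is more self-contained and isolates precisely where dense range is used.
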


We will present the proofs of Theorem \ref{thm:p-means} and Corollary \ref{cor:3means-sphere}  in the subsection \ref{Ss:3.4}.

Finally, we remark that although there is also a structure theorem
for nonsurjective linear isometries between general $C^*$-algebras in \cite{CW04},
it is far more complicated than the one given in  Proposition \ref{th102}.
Meanwhile,  similar questions for nonsurjective maps preserving the norm of means between general $C^*$-algebras do not seem traceable.

\section{Surjective maps preserving the norm of means between positive   cones of $C^*$-algebras}\label{S:2}

The set of all positive elements of a
$C^*$-algebra $\A$, that is the set of all self-adjoint elements with nonnegative spectrum, is denoted by $\Ap$, and we call it the
\textit{positive (semidefinite) cone} of $\A$. For self-adjoint elements $A,B\in \A$ we write $A\leq B$ if and only if $B-A\in \Ap$.
When $\A$ is unital, the set $\App$ of all invertible elements in $\Ap$ is called the \textit{positive definite cone} of $\A$.

Let $\A,\B$ be  $C^*$-algebras.  A bijective linear map $J:\A \to \B$ is called a
Jordan $*$-isomorphism if
$J(A^*)=J(A)^*$ and   $J(A^2)=J(A)^2$, $A\in \A$; the second condition is equivalent to that
\[
J(AB+BA)=J(A)J(B)+J(B)J(A), \quad A,B\in \A.
\]
Jordan $*$-isomorphisms are the fundamental symmetries of $C^*$-algebras.
For various characterizations of them we refer to \cite{ML13j}. In particular,
it follows from Kadison's extension \cite{Kad} of the classical Banach-Stone theorem  that the positive
surjective linear isometries between $C^*$-algebras are exactly the Jordan $*$-isomorphisms.
We also have the following special case of  Theorem 9 in \cite{ML14a} (also see Theorem 16 in \cite{ML19c}).

\begin{proposition}[Hatori and Moln\'{a}r \cite{ML14a}, Moln\'{a}r \cite{ML19c}]\label{prop-Monlar}
Let $\phi: \App\to \Bpp$ be a positive homogeneous order isomorphism between the positive definite cones of  unital $C^*$-algebras.
In other words,
  \begin{enumerate}[(1)]
    \item $\phi$ is a bijective map,
    \item $A\leq B$ if and only if $\phi(A)\leq \phi(B)$, and
    \item $\phi(tA)=t\phi(A)$
  \end{enumerate}
holds for every $A,B\in \App$ and positive real number $t$.
  Then there is a $C\in \Bpp$ and a Jordan $*$-isomorphism $J: \A\to \B$ such that $\phi(A)= CJ(A)C$ for all $A\in \App$.
\end{proposition}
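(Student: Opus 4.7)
The plan is to first reduce to a unital situation by a congruence transformation, then extend the resulting map from the positive definite cone to the full self-adjoint part, and finally invoke Kadison's theorem to obtain a Jordan $*$-isomorphism. Normalize $\phi$ by setting $C := \phi(I_{\A})^{1/2} \in \Bpp$ and defining $\psi \colon \App \to \Bpp$ by $\psi(A) := C^{-1}\phi(A)C^{-1}$. Since the congruence $X \mapsto C^{-1}XC^{-1}$ is itself a positive homogeneous order isomorphism of $\Bpp$, the composed map $\psi$ is still a positive homogeneous order isomorphism, and it now satisfies $\psi(I_{\A}) = I_{\B}$. It therefore suffices to show that such a unital $\psi$ is the restriction to $\App$ of a Jordan $*$-isomorphism $J \colon \A \to \B$, for then $\phi(A) = C\,J(A)\,C$ is precisely the desired form.

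Next I would extend $\psi$ to the full self-adjoint part. The key intermediate claim is the shift-invariance
\[
\psi(A + tI_{\A}) = \psi(A) + tI_{\B} \qquad (A \in \App,\ t > 0).
\]
Granting this, for any $X \in \A_{\mathrm{sa}}$ and any $t > 0$ large enough that $X + tI_{\A} \in \App$, the formula $\tilde{\psi}(X) := \psi(X + tI_{\A}) - tI_{\B}$ is independent of $t$ and yields a well-defined, unital, positive homogeneous, order isomorphism $\tilde{\psi} \colon \A_{\mathrm{sa}} \to \B_{\mathrm{sa}}$ that extends $\psi$. From here real-additivity of $\tilde{\psi}$ can be recovered by characterizing the arithmetic mean $(X+Y)/2$ in purely order-theoretic and homogeneous terms (for instance via the Kubo-Ando calculus, or as a limit of iterated geometric means $A \# B$, which admit an order-theoretic description through their Anderson--Trapp maximum characterization). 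Combined with $\tilde{\psi}(-X) = -\tilde{\psi}(X)$, which is built into the shift-extension, this gives real-linearity. Once $\tilde{\psi}$ is a real-linear, unital order isomorphism from $\A_{\mathrm{sa}}$ onto $\B_{\mathrm{sa}}$, Kadison's theorem furnishes a Jordan $*$-isomorphism $J \colon \A \to \B$ whose self-adjoint restriction is $\tilde{\psi}$; restricting to $\App$ and conjugating by $C$ recovers $\phi$.

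The main obstacle is clearly the shift-invariance step, together with the subsequent additivity of $\tilde{\psi}$. The difficulty is that one must extract an affine structure from only an order structure, positive scalar multiplication, and a distinguished unit. I expect to address it by characterizing $A + tI$ via a canonical order-theoretic operation on $\App$---for example, through the interplay of the inverse map, which is an order-reversing automorphism of $\App$, with positive homogeneity, giving a Kubo--Ando style formula for the sum that only uses data preserved by $\psi$---and then transporting that characterization to $\Bpp$.
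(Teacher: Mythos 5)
There is a genuine gap. The paper does not prove this proposition at all --- it is quoted as a special case of Theorem 9 in \cite{ML14a} --- and the standard route there is quite different from yours: a positively homogeneous order isomorphism preserves the quantity $M(A/B)=\inf\{t>0: A\leq tB\}$, hence preserves the Thompson metric $d(A,B)=\log\max\{M(A/B),M(B/A)\}$, and one then invokes the Hatori--Moln\'ar description of surjective Thompson isometries between positive definite cones (which are of the form $A\mapsto CJ(A)C$ or $A\mapsto CJ(A^{-1})C$, the second being excluded here because it reverses the order). Your proposal never makes this connection; instead it reduces everything to the ``key intermediate claim'' of shift-invariance $\psi(A+tI)=\psi(A)+tI$ and to additivity of the extension, and these are precisely the hard content of the theorem. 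You do not prove either one: you only ``grant'' shift-invariance and sketch a hope of characterizing $A+tI$ and $(X+Y)/2$ order-theoretically.

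The specific hints you give for closing that gap do not work as stated. The arithmetic mean is \emph{not} a limit of iterated geometric means (the arithmetic--geometric iteration converges to the AGM, which lies strictly between $A\#B$ and $(A+B)/2$ when $A\neq B$), and the Anderson--Trapp maximum characterization of $A\#B$ uses positivity of $2\times 2$ block matrices over the algebra, an order that a bijection of $\App$ preserving only the order of $\A$ has no reason to respect. Likewise, parallel-sum/harmonic-mean characterizations already presuppose the additive structure you are trying to recover. A further small error: the identity $\tilde\psi(-X)=-\tilde\psi(X)$ is \emph{not} ``built into the shift-extension''; from $\tilde\psi(X)=\psi(X+tI)-tI$ it would require $\psi(X+tI)+\psi(-X+tI)=2tI$, which is again an unproved additivity statement. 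The outer scaffolding (normalizing by $C=\phi(I)^{1/2}$, extending to $\A_{\mathrm{sa}}$, finishing with Kadison's theorem) is fine, but without a genuine argument for affineness --- e.g.\ via the Thompson-isometry reduction --- the proof is not complete.
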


Lemma \ref{lem:key} below provides us another
interesting nonlinear characterization of Jordan $*$-isomorphisms on the positive cones of $C^*$-algebras.
For its proof, we need

\begin{lemma}\label{L:3}
Let $\A$ be a unital $C^*$-algebra and $A,B\in \App$. The following assertions are equivalent:
\begin{itemize}
\item[(i)] $A=B$;
\item[(ii)] for any $C\in \Ap$ and positive real number $\delta$, we have
\begin{align*}
\{X\in \Ap\, :\, CX=0,\, \|X\|\leq \delta\}\ &=\ \{X\in \Ap\, :\, CX=0,\, \|X\|\leq \delta,\, X\leq A\}
\intertext{if and only if}
\{X\in \Ap\, :\, CX=0,\, \|X\|\leq \delta\}\ &=\ \{X\in \Ap\, :\, CX=0,\, \|X\|\leq \delta,\, X\leq B\}.
\end{align*}
\end{itemize}
\end{lemma}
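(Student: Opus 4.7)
The forward implication (i) $\Rightarrow$ (ii) is immediate: when $A=B$ the two set equalities in (ii) literally coincide. For the converse, write $P_M(C,\delta)$ for the assertion ``every $X \in \Ap$ with $CX = 0$ and $\|X\| \leq \delta$ satisfies $X \leq M$'', so that (ii) reduces to the biconditional $P_A(C,\delta) \Leftrightarrow P_B(C,\delta)$ for every pair $(C,\delta)$, and the plan is to show this family of equivalences forces $A = B$.

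My first step is to pass to the enveloping von Neumann algebra $\A^{**}$. For each $C \in \Ap$, consider the hereditary $C^*$-subalgebra $\mathcal{H}_C := \{Y \in \A : CY = YC = 0\}$ and its open support projection $p_C \in \A^{**}$. Every $X$ in the test set $S(C,\delta) = \{X \in \Ap : CX = 0,\ \|X\| \leq \delta\}$ belongs to $\mathcal{H}_C$, so $X \leq \|X\|\,p_C \leq \delta p_C$; conversely, an approximate unit $(e_\alpha)$ of $\mathcal{H}_C$ yields $\delta e_\alpha \in S(C,\delta)$ with $\delta e_\alpha \nearrow \delta p_C$ weak$^*$ in $\A^{**}$. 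This gives the reformulation
\[ P_M(C,\delta) \iff \delta p_C \leq M \text{ in } \A^{**}. \]
Combining this with the standard identity $\delta p \leq M \Leftrightarrow \delta\|p M^{-1} p\| \leq 1$ (valid for a positive invertible $M$ and a projection $p$), condition (ii) translates into
\[ \|p_C A^{-1} p_C\| = \|p_C B^{-1} p_C\| \quad \text{for every } C \in \Ap. \]

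To conclude, I would extract $A^{-1} = B^{-1}$ by testing against pure states. For every pure state $\omega$ of $\A$ there is an associated minimal projection $p_\omega \in \A^{**}$ satisfying $p_\omega M p_\omega = \omega(M)\,p_\omega$ for $M \in \A$; in particular $\|p_\omega A^{-1} p_\omega\| = \omega(A^{-1})$. Because the left kernel $\mathcal{L}_\omega = \{Y \in \A : \omega(Y^*Y) = 0\}$ has support $1 - p_\omega$ in $\A^{**}$, every $C \in \mathcal{L}_\omega \cap \Ap$ satisfies $p_\omega \leq 1 - s(C)$, and by choosing a suitable increasing net inside $\mathcal{L}_\omega \cap \Ap$ one can arrange that the corresponding $p_C$ decrease weak$^*$ to $p_\omega$. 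Weak$^*$-continuity of the compression $p \mapsto p A^{-1} p$ (and of its norm) along such a net propagates the equality above down to $p_\omega$, giving $\omega(A^{-1}) = \omega(B^{-1})$ for every pure state; since pure states separate the elements of $\A$, this forces $A^{-1} = B^{-1}$, and hence $A = B$.

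The main obstacle will be the last step: transferring the norm equality from our explicit family of $\sigma$-unital open projections $\{p_C : C \in \Ap\}$ to the atomic minimal projections $\{p_\omega\}$ arising from pure states. I expect to handle this using Akemann's theory of open and closed projections (which realises every closed projection of $\A^{**}$ as an intersection of ones of the form $1 - s(C)$) together with the weak$^*$-continuity of compressions along monotone nets in the enveloping von Neumann algebra. This is the only place where genuine $C^*$-algebra machinery is needed; the remainder of the argument is routine reformulation.
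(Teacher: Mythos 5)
Your opening reformulation is correct and is in fact a cleaner, more conceptual packaging of what the paper does implicitly: both arguments ultimately rest on the equivalence $\delta p\le M\Longleftrightarrow \delta\|pM^{-1}p\|\le 1$ for a projection $p$ and an invertible positive $M$, and your derivation of $\|p_CA^{-1}p_C\|=\|p_CB^{-1}p_C\|$ for every $C\in\Ap$ from hypothesis (ii) is sound (the set equality in (ii) is indeed just the statement ``every $X$ in the test set lies below $M$'', and the approximate-unit argument gives $\delta p_C\le M$). The gap is entirely in the last step, and it is genuine, in two ways. First, the existence of a net $C_\alpha\in\mathcal{L}_\omega\cap\Ap$ with $p_{C_\alpha}\searrow p_\omega$ is not automatic: $p_C$ is the \emph{largest open} projection dominated by the closed projection $1-s(C)$ and can be strictly smaller than it --- already in $C[0,1]$ with $\omega$ the evaluation at $1/2$ and $C(t)=|t-1/2|$ one gets $p_C=0$, because the zero set of $C$ has empty interior --- so along an approximate unit of $\mathcal{L}_\omega$ one only knows $\inf_\alpha p_{C_\alpha}\le p_\omega$, and establishing equality requires an argument. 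Second, and more seriously, the norm on $\A^{**}$ is only weak$^*$-\emph{lower} semicontinuous: from $p_{C_\alpha}A^{-1}p_{C_\alpha}\to p_\omega A^{-1}p_\omega$ weak$^*$ you get $\|p_\omega A^{-1}p_\omega\|\le\lim_\alpha\|p_{C_\alpha}A^{-1}p_{C_\alpha}\|$ (the limit exists since the net of norms is decreasing), but the inequality can be strict; compare a decreasing sequence of infinite-rank projections tending weak$^*$ to $0$ while every compression of the identity has norm $1$. What you actually need is the noncommutative analogue of ``$\sup_U a^{-1}\to a^{-1}(x)$ as the open neighbourhood $U$ shrinks to $x$'', i.e.\ $\|qA^{-1}q\|=\inf\{\|pA^{-1}p\|: p\ \text{open},\ p\ge q\}$ for the compact projection $q=p_\omega$. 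Something of this kind is true in Akemann's theory of compact projections, but it is a theorem requiring proof, not a routine continuity remark, and it is exactly where the real work of the lemma would sit in your approach.

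The paper avoids both difficulties by never approximating a minimal projection from outside. It sets $T=A^{-1}-B^{-1}$, uses a spectral gap of $T$ below some $\lambda_0<0$ to obtain the single quantitative inequality
\begin{equation*}
\|E_T(-\infty,\lambda_0)B^{-1}E_T(-\infty,\lambda_0)\|\ \ge\ \|E_T(-\infty,\lambda_0)A^{-1}E_T(-\infty,\lambda_0)\|+|\lambda_0|,
\end{equation*}
and then applies hypothesis (ii) to the one explicit test element $C=f_\epsilon(T)$, for which the test set is computed exactly as $\{X\in\Ap: X\le\delta E_T(-\infty,\lambda_0-\epsilon]\}$. The only limit taken is a monotone \emph{increasing} one ($X_n\nearrow\delta E_T(-\infty,\lambda_0-\epsilon)$ with each $X_n\le B$), which is the safe direction for passing order relations to weak$^*$ limits. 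If you want to rescue your route, the cheapest repair is to keep your identity $\|p_CA^{-1}p_C\|=\|p_CB^{-1}p_C\|$ but evaluate it only at these spectral test elements $C=f_\epsilon(A^{-1}-B^{-1})$, where the relevant projections are explicit and no outer approximation of $p_\omega$ is needed.
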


\begin{proof}
The implication (i) $\Rightarrow$ (ii) is obvious. Assume now that (ii) holds.
Assume also that, on the contrary, $T=A^{-1}-B^{-1}$ is nonzero and has, for example, a negative element in its spectrum.
Let $\lambda_0$ be a negative real number such that for the spectral measure $E_T$ of $T$ we have $E_T(\!-\infty, \lambda_0)\neq 0$.
Observe that
\begin{eqnarray*}
\lambda_0 E_T(\!-\infty, \lambda_0)&\geq& E_T(\!-\infty, \lambda_0)TE_T(\!-\infty, \lambda_0)\\
&=& E_T(\!-\infty, \lambda_0)(A^{-1}-B^{-1})E_T(\!-\infty, \lambda_0).
\end{eqnarray*}
This implies that
\begin{eqnarray*}
E_T(\!-\infty, \lambda_0)B^{-1}E_T(\!-\infty, \lambda_0)\geq E_T(\!-\infty, \lambda_0)A^{-1}E_T(\!-\infty, \lambda_0)-\lambda_0 E_T(\!-\infty, \lambda_0).
\end{eqnarray*}
Taking norms, we have
\begin{equation}\label{E:10}
\| E_T(\!-\infty, \lambda_0)B^{-1} E_T(\!-\infty, \lambda_0)\| \geq \| E_T(\!-\infty, \lambda_0)A^{-1} E_T(\!-\infty, \lambda_0)\|+|\lambda_0|.
\end{equation}

For a real number $\epsilon>0$,
pick a continuous real function $f_\epsilon$ with values in $[0,1]$ such that $f_\epsilon(t)=0$ for $t\leq \lambda_0-\epsilon$ and $f_\epsilon(t)>0$ for $t> \lambda_0 -\epsilon$. Let $\delta>0$. We assert
$$
\{X\in \Ap\,:\, f_\epsilon(T)X=0, \| X\|\leq \delta\}=
\{X\in \Ap\,:\, X\leq \delta E_T(\!-\infty ,\lambda_0-\epsilon]\}.
$$
In fact, only the inclusion of the former set in the latter one needs proof. If $f_\epsilon(T)X=0$, then
$Xf_\epsilon^{1/n}(T)=0$ for every positive integer $n$ which then yields that $XE_T(\lambda_0-\epsilon, \infty)=0$.
Therefore, if $X\in \Ap$, $f_\epsilon(T)X=0$, $\|X\|\leq \delta$, we necessarily have
$X\leq \delta E_T(\!-\infty, \lambda_0-\epsilon]$.

We assert that for a particular positive number $\delta$ satisfying that
$$
1/\delta \geq \|E_T(\!-\infty, \lambda_0)A^{-1}E_T(\!-\infty, \lambda_0)\|,
$$
we have
\begin{equation}\label{La:1}
\begin{gathered}
\{X\in \Ap\, :\, X \leq \delta E_T(\!-\infty, \lambda_0-\epsilon]\}=
\{X\in \Ap\, :\, f_\epsilon(T)X=0, \| X\| \leq \delta \}\\
=\{X\in \Ap\, :\, f_\epsilon(T)X=0, \| X\| \leq \delta, X\leq A \}.
\end{gathered}
\end{equation}
The first equality is already known.
To verify the second one, we show that
$$
\delta E_T(\!-\infty, \lambda_0)\leq A.
$$
Indeed, this inequality holds if and only if
$$
A^{-1/2}E_T(\!-\infty, \lambda_0)A^{-1/2}\leq (1/\delta)I
$$
which amounts to saying that
$$
\|A^{-1/2}E_T(\!-\infty, \lambda_0)A^{-1/2}\|\leq 1/\delta,
$$
or equivalently,
$$
\|E_T(\!-\infty, \lambda_0)A^{-1}E_T(\!-\infty, \lambda_0)\|\leq 1/\delta.
$$
The last condition is true by our choice of $\delta$. Using the first equality in \eqref{La:1}, we then have the second equality there.

From \eqref{La:1} together with the assumption (ii), it follows that for any chosen number
$\delta$ such that $1/\delta \geq \|E_T(\!-\infty, \lambda_0)A^{-1}E_T(\!-\infty, \lambda_0)\|$ and  for any $X\in \Ap$, we have the implication
$$
X\leq \delta  E_T(\!-\infty, \lambda_0-\epsilon] \quad\implies\quad X\leq B.
$$
Since there is a monotone increasing sequence of nonnegative continuous functions
which converges pointwise to the indicator function of the open interval $(\!-\infty, \lambda_0-\epsilon)$,
we obtain that
$$
\delta  E_T(\!-\infty, \lambda_0-\epsilon)\leq B, \quad \epsilon>0,
$$
which
means
$$
B^{-1/2}E_T(\!-\infty, \lambda_0)B^{-1/2}\leq 1/\delta.
$$
This gives us that
$$
\| E_T(\!-\infty, \lambda_0)B^{-1} E_T(\!-\infty, \lambda_0)\|\leq 1/\delta.
$$
Since this holds for any positive  $\delta$ such that
$1/\delta \geq\|E_T(\!-\infty, \lambda_0)A^{-1}E_T(\!-\infty, \lambda_0)\|$, we
obtain a contradiction to \eqref{E:10}. This completes the proof of the lemma.
\end{proof}

\begin{lemma}\label{lem:key}
Let $\A$ and $\B$ be $C^*$-algebras such that at least one of them is unital.
  Let $\phi:\A_+\to \B_+$ be a surjective map between the positive cones of $\A$ and $\B$.
  Suppose that $\phi$  preserves the norm, the order and the orthogonality in two directions; namely,
  the conditions
  \begin{enumerate}[(1)]
  \item $\|\phi(A)\|=\|A\|$,
  \item $A\leq B$ if and only if $\phi(A)\leq \phi(B)$, and
  \item $AB = 0$ if and only if $\phi(A)\phi(B)=0$
  \end{enumerate}
  are satisfied for any $A,B\in \Ap$.
Then $\phi$ extends to a Jordan $*$-isomorphism $J:\A\to \B$.
\end{lemma}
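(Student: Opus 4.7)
The plan is to reduce the statement to Proposition~\ref{prop-Monlar} by first showing that $\phi$ is positive homogeneous on the positive definite cone, then extending the resulting identification to all of $\Ap$. I would begin with some basic observations: $\phi$ is injective, since $\phi(A_1)=\phi(A_2)$ combined with condition~(2) forces $A_1\leq A_2\leq A_1$, and $\phi(0)=0$ because $0$ is the minimum of each positive cone. When $\A$ is unital, $I_{\A}$ is the maximum of $\{A\in\Ap:\|A\|\leq 1\}$; conditions~(1)--(2) together with the surjectivity of $\phi$ force $\phi(I_{\A})$ to be the maximum of $\{Y\in\Bp:\|Y\|\leq 1\}$. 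Such a maximum exists in $\Bp$ only when $\B$ is unital, in which case it equals $I_{\B}$ (the case where instead $\B$ is assumed unital is handled symmetrically using $\phi^{-1}$). Repeating the argument with $\epsilon I$ in place of $I$ gives $\phi(\epsilon I)=\epsilon I$ for every $\epsilon>0$, and consequently $A\in\App$ iff $A\geq\epsilon I$ for some $\epsilon>0$ iff $\phi(A)\in\Bpp$; so $\phi$ restricts to a norm-, order-, and orthogonality-preserving bijection $\App\to\Bpp$.

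The main technical step is to prove that $\phi$ is positive homogeneous on $\App$: $\phi(tA)=t\phi(A)$ for $A\in\App$ and $t>0$. Since both sides lie in $\Bpp$, Lemma~\ref{L:3} reduces this to the assertion that, for every $C'\in\Bp$ and every $\delta>0$,
\[
\{Y\in\Bp:C'Y=0,\,\|Y\|\leq\delta\}\subseteq\{Y\in\Bp:Y\leq\phi(tA)\}
\]
holds if and only if
\[
\{Y\in\Bp:C'Y=0,\,\|Y\|\leq\delta\}\subseteq\{Y\in\Bp:Y\leq t\phi(A)\}.
\]
Writing $C'=\phi(C)$ by surjectivity and pulling back through $\phi^{-1}$ using conditions~(1)--(3), the first inclusion is equivalent to
$\{X\in\Ap:CX=0,\,\|X\|\leq\delta\}\subseteq\{X\in\Ap:X\leq tA\}$.
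For the second, the rescaling $Z=Y/t$ converts $Y\leq t\phi(A)$ into $Z\leq\phi(A)$ and $\|Y\|\leq\delta$ into $\|Z\|\leq\delta/t$; pulling back yields $\{X\in\Ap:CX=0,\,\|X\|\leq\delta/t\}\subseteq\{X\in\Ap:X\leq A\}$, and substituting $X\mapsto X/t$ rewrites this as exactly the first inclusion. Lemma~\ref{L:3} then delivers $\phi(tA)=t\phi(A)$. Extracting positive homogeneity purely from order, orthogonality, and norm preservation via the characterization in Lemma~\ref{L:3} is the main obstacle of the proof.

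With positive homogeneity in hand, $\phi|_{\App}$ satisfies the hypotheses of Proposition~\ref{prop-Monlar}, producing $C\in\Bpp$ and a Jordan $*$-isomorphism $J:\A\to\B$ with $\phi(A)=CJ(A)C$ for $A\in\App$. Setting $A=I$ and using $J(I)=I$ gives $\phi(I)=C^2=I$, hence $C=I$ and $\phi=J$ on $\App$. To extend to $\Ap$, I observe that $\phi^{-1}:\Bp\to\Ap$ satisfies the same three preservation conditions, so by the same argument it agrees on $\Bpp$ with a Jordan $*$-isomorphism $J':\B\to\A$; composing with $\phi=J$ on $\App$ forces $J'=J^{-1}$. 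For any $A\in\Ap$ we have $A\leq A+\epsilon I\in\App$, so $\phi(A)\leq\phi(A+\epsilon I)=J(A)+\epsilon I$; letting $\epsilon\to 0^+$ yields $\phi(A)\leq J(A)$. Applying the same chain of inequalities to $\phi^{-1}$ at $J(A)\in\Bp$ gives $\phi^{-1}(J(A))\leq J^{-1}(J(A))=A$, and applying the order-preserving $\phi$ turns this into $J(A)\leq\phi(A)$. Therefore $\phi=J$ on all of $\Ap$, and $J$ is the desired Jordan $*$-isomorphism extending $\phi$.
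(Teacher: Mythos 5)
Your proof is correct and follows essentially the same route as the paper's: establish that $\B$ is unital with $\phi(tI)=tI$, derive positive homogeneity on $\App$ from Lemma~\ref{L:3} via the same pull-back of the sets $\{Y\in\Bp : CY=0,\ \|Y\|\le\delta\}$ through $\phi$, and invoke Proposition~\ref{prop-Monlar}. The only (harmless) deviation is the final extension from $\App$ to $\Ap$, where you use a two-sided sandwich argument with $\phi$ and $\phi^{-1}$ instead of the paper's observation that the order isomorphism $\phi$ preserves the infimum $A=\inf\{A+tI : t>0\}$.
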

\begin{proof}
First note that $\phi$ is bijective and sends the positive part $\operatorname{B}(\A)_+$ of the closed unit ball of $\A$ onto
the positive part $\operatorname{B}(\B)_+$ of the closed unit ball of  $\B$.
Assume that  $\A$  has identity $I$.  Since $I$ is the greatest element of $\operatorname{B}(\A)_+$,
the image $\phi(I)$ is also the greatest element of $\operatorname{B}(\B)_+$.
It implies that $\B$ is also unital with the identity $I=\phi(I)$.  The case that $\B$ has an identity is similar.
Therefore, we can assume that both $\A,\B$ are unital.

It follows from the norm preserving property that
$\phi(tI)=tI$ holds for every positive real number $t$.
Indeed, we have
$\phi(A)\leq tI$ if and only if $\|\phi(A)\|\leq t$, which is equivalent to $\|A\|\leq t$.
The latter holds if and only if $A \leq tI$,
and this is equivalent to $\phi(A)\leq \phi(tI)$ since $\phi$ is an order isomorphism. We then obtain the desired equality $\phi(tI)=tI$.
Using the order preserving property and this equality, it follows that $\phi$ preserves the invertible positive elements in both directions.

Now, select any $A\in \App$ and positive real number $t$. Then for any $D\in \Bp$ and $\delta>0$ we have
the following equivalences.
\begin{align*}
&\quad \{Y\in \Bp\, :\, DY=0, \|Y\|\leq \delta\}\\
&\qquad=\{Y\in \Bp\, :\, DY=0,\,  \|Y\|\leq \delta,\,  Y\leq \phi(tA)\}\\
\Longleftrightarrow
&\quad \text{for any } Y\in \Bp\ \text{we have}\  DY=0,\, \|Y\|\leq \delta \Rightarrow Y\leq  \phi(tA)\\
\Longleftrightarrow
&\quad \text{for any } X\in \Ap\ \text{we have}\  \phi^{-1}(D)X=0,\,  \|X\|\leq \delta \Rightarrow X\leq tA\\
\Longleftrightarrow
&\quad \text{for any } X\in \Ap\ \text{we have}\  \phi^{-1}(D)(X/t)=0,\,  \|X/t\|\leq \delta/t \Rightarrow X/t\leq A\\
\Longleftrightarrow
&\quad \text{for any } Z\in \Ap\ \text{we have}\  \phi^{-1}(D)Z=0,\,  \|Z\|\leq \delta/t \Rightarrow Z\leq A\\
\Longleftrightarrow
&\quad \text{for any } Y\in \Bp\ \text{we have}\  DY=0,\,  \|Y\|\leq \delta/t \Rightarrow Y\leq \phi(A)\\
\Longleftrightarrow
&\quad \text{for any } Y\in \Bp\ \text{we have}\  D(tY)=0,\,  \|tY\|\leq \delta \Rightarrow tY\leq t\phi(A)\\
\Longleftrightarrow
&\quad \{W\in \Bp\, :\, DW=0, \|W\|\leq \delta\}\\
&\qquad=\{W\in \Bp\, :\, DW=0,\,  \|W\|\leq \delta,\,  W\leq t\phi(A)\}.
\end{align*}
By Lemma \ref{L:3}, this yields $\phi(tA)=t\phi(A)$. It follows that $\phi$, when restricted to $\App$, is a bijection onto $\Bpp$ which preserves the order in both directions and is positive homogeneous.
By Proposition \ref{prop-Monlar},
there exist a Jordan $*$-isomorphism $J:\A\to \B$ and an invertible positive element $C\in \B$ such that $\phi(A)=CJ(A)C$ holds for all $A\in \App$. Since $\phi(tI)=tI$, it follows that $C=I$. This gives us that
$\phi(A)=J(A)$ for all $A\in \App$.

To show that this former equality is valid for any $A\in \Ap$, observe that
$$
\phi(A + tI) = J(A+tI) = J(A) + tI, \quad t>0.
$$
Since $\phi$ is an order isomorphism and $A  = \inf\{A+tI: t >0\}$, we have
\begin{align*}
 \phi(A) = \inf\{\phi(A+tI): t>0\}= \inf \{J(A + tI): t>0\} = J(A), \quad A\in \Ap.
\end{align*}
\end{proof}

\subsection{Preservers of the norm of the arithmetic mean}

The following result describes the precise structure of surjective maps between positive definite cones of $C^*$-algebras,
which preserve the norm of the arithmetic mean.

\begin{theorem}\label{T:2}
Let $\A,\B$ be unital $C^*$-algebras. Assume that $\phi:\App\to \Bpp$ is a surjective map. Then $\phi$ satisfies
\begin{equation}\label{E:2}
\|\phi(A)+\phi(B)\|=\|A+B\|, \quad A,B\in \App,
\end{equation}
if and only if there is a Jordan $*$-isomorphism $J:\A\to \B$ which extends $\phi$, i.e., $\phi(A)=J(A)$ holds  for all $A\in \App$.
\end{theorem}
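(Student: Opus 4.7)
The ``if'' direction is immediate: by Kadison's theorem, any Jordan $*$-isomorphism $J:\A\to\B$ is a positive linear isometry, so $\|J(A)+J(B)\|=\|J(A+B)\|=\|A+B\|$ on $\App$.

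For the ``only if'' direction, the plan is to establish enough structural properties of $\phi$ to apply Lemma \ref{lem:key} to a suitable extension $\bar\phi:\Ap\to\Bp$ of $\phi$. Setting $A=B$ in \eqref{E:2} immediately yields $\|\phi(A)\|=\|A\|$. Since $\|A+sI\|=\|A\|+s$ for every $A\in\App$ and $s>0$, we get $\|\phi(A)+\phi(sI)\|=\|\phi(A)\|+\|\phi(sI)\|$; by the surjectivity of $\phi$ onto $\Bpp$, the element $\phi(sI)$ attains its norm at a common state with every element of $\Bpp$, and a standard pure-state argument then forces $\phi(sI)=sI$.

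The next step is to establish the characterization
\[
A\leq B \iff \|A+C\|\leq\|B+C\| \text{ for every } C\in\App,
\]
where the forward direction is the monotonicity of the norm on $\Ap$. For the converse, if $A\not\leq B$, then the spectrum of $A-B$ meets some interval $[\mu,\infty)$ with $\mu>0$; choosing $C=tf_\eps(A-B)+sI\in\App$ with $f_\eps\in C(\sigma(A-B))$ approximating the indicator of $[\mu,\infty)$, a direct spectral computation gives $\|A+C\|\geq\|B+C\|+\mu-O(\eps)$ for large $t$ and small $s$, which is the desired contradiction. Combining this characterization with \eqref{E:2} and the surjectivity of $\phi$ yields $A\leq B\iff \phi(A)\leq\phi(B)$, so $\phi$ is an order isomorphism of $\App$ onto $\Bpp$.

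The final step is to extend $\phi$ to a map $\bar\phi:\Ap\to\Bp$, which I expect to be the \textbf{main obstacle}. For $A\in\Ap$, define $\bar\phi(A)=\inf_{t>0}\phi(A+tI)$: the net $\{\phi(A+tI)\}_{t\downarrow 0}$ is decreasing by order preservation and norm-bounded, but one must verify that it is norm-Cauchy so that $\bar\phi(A)$ lies in $\B$ rather than merely in $\B^{**}$. This verification combines $\phi(sI)=sI$, order preservation, and the identity $\|\phi(A+tI)+\phi(A+t'I)\|=\|2A+(t+t')I\|=2\|A\|+(t+t')$ to pin down the diagonal piece of the difference $\phi(A+tI)-\phi(A+t'I)$. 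Once $\bar\phi$ is in hand, norm and order preservation pass to it by monotone continuity, while the orthogonality preservation required by Lemma \ref{lem:key} follows from the characterization $\|X+Y\|=\max(\|X\|,\|Y\|)$ of orthogonality on pairs of positive elements, which is encoded in the norm-of-sums invariance of $\bar\phi$. Lemma \ref{lem:key} then supplies a Jordan $*$-isomorphism $J:\A\to\B$ whose restriction to $\App$ coincides with $\phi$, completing the proof.
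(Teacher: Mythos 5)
Your overall strategy --- establish norm, order, and orthogonality preservation and then invoke Lemma \ref{lem:key} --- matches the paper's, and your first steps (norm preservation, $\phi(sI)=sI$, the characterization $A\leq B\iff\|A+C\|\leq\|B+C\|$ for all $C\in\App$) are sound, although your spectral proof of the order characterization is far more elaborate than necessary: taking $C=tI-B$ with $t$ large enough gives $\|A+(tI-B)\|\leq\|B+(tI-B)\|=t$, hence $A+tI-B\leq tI$, i.e.\ $A\leq B$, in one line (this is exactly the paper's Lemma \ref{L:1}).

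The genuine gap is in the extension step, which you rightly flag as the main obstacle but do not resolve. Defining $\bar\phi(A)=\inf_{t>0}\phi(A+tI)$ requires the decreasing net $\{\phi(A+tI)\}_{t\downarrow0}$ to be norm-convergent, and a decreasing, norm-bounded net of positive elements of a $C^*$-algebra need not converge in norm (its order-infimum need not even exist in the algebra). The identity you propose to exploit, $\|\phi(A+tI)+\phi(A+t'I)\|=2\|A\|+t+t'$, writes the sum as $2\phi(A+t'I)+D$ with $D=\phi(A+tI)-\phi(A+t'I)\geq0$ and says that adding $D$ raises the norm by exactly $t-t'$; by the triangle inequality this yields only the \emph{lower} bound $\|D\|\geq t-t'$, whereas Cauchyness needs an \emph{upper} bound, which does not seem reachable without already knowing $\phi$ is affine. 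The paper sidesteps the limit entirely: fix a single $\epsilon>0$ and set $\psi(A)=\phi(A+\epsilon I)-\epsilon I$. Order preservation together with $\phi(\epsilon I)=\epsilon I$ gives $\phi(A+\epsilon I)\geq\epsilon I$, so $\psi$ maps $\Ap$ onto $\Bp$; and since $\|X-2\epsilon I\|=\|X\|-2\epsilon$ whenever $X\geq2\epsilon I$, $\psi$ again preserves the norm of sums. The semidefinite-cone result (Theorem \ref{T:1}) then applies to $\psi$ directly, and one recovers $\phi$ on all of $\App$ by varying $\epsilon$ (or via the affineness argument the paper gives). A secondary inaccuracy: the orthogonality input to Lemma \ref{lem:key} needs the full form of Lemma \ref{L:2} --- $AB=0$ iff $\|C+D\|=\max\{\|C\|,\|D\|\}$ for \emph{all} $C\leq A$ and $D\leq B$ --- since the equality for the single pair $(A,B)$ alone does not characterize $AB=0$ (consider two bump functions with overlapping supports in $C[0,1]$).
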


The proof of Theorem \ref{T:2} will be easy after we verify the corresponding result for such maps between positive semidefinite cones.

\begin{theorem}\label{T:1}
Let $\A,\B$ be $C^*$-algebras such that at least one of them is unital.
Assume that $\phi:\Ap\to \Bp$ is a surjective map. Then $\phi$ satisfies
\begin{equation}\label{E:1}
\|\phi(A)+\phi(B)\|=\|A+B\|, \quad A,B\in \Ap,
\end{equation}
if and only if there is a Jordan $*$-isomorphism $J:\A\to \B$ which extends $\phi$.
\end{theorem}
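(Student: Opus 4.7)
The plan is to reduce Theorem \ref{T:1} to Lemma \ref{lem:key} by showing that a surjective, norm-additive map $\phi:\Ap\to\Bp$ automatically preserves the norm, the order in both directions, and the orthogonality in both directions. The reverse implication is immediate since every Jordan $*$-isomorphism is a positive isometry by Kadison's theorem, so $\|J(A)+J(B)\|=\|J(A+B)\|=\|A+B\|$.

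For the forward direction, norm preservation is free: taking $A=B=0$ gives $\|2\phi(0)\|=0$ and so $\phi(0)=0$, and then $A=B$ arbitrary yields $\|\phi(A)\|=\|A\|$, which also makes $\phi$ injective. Assume without loss of generality that $\A$ is unital with unit $I$. The central order-theoretic lemma I would prove is
\[
A\leq B \ \Longleftrightarrow\ \|A+C\|\leq\|B+C\| \ \text{ for every } C\in\Ap.
\]
The nontrivial direction is a contradiction argument: if $A\not\leq B$, then $T=B-A$ has $\min\sigma(T)<0$, and for any $\lambda>\|B\|$ the element $C=\lambda I-B$ lies in $\Ap$ and satisfies $\|B+C\|=\lambda$ while $A+C=\lambda I-T\geq 0$ has $\|A+C\|=\lambda-\min\sigma(T)>\lambda$, violating the hypothesis. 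Coupling this characterization with norm-additivity and surjectivity translates $A\leq B$ into $\|\phi(A)+D\|\leq\|\phi(B)+D\|$ for all $D\in\Bp$. Specialising to $B=I$ shows that $\phi(I)$ is an upper bound, in this norm sense, for the entire positive unit ball of $\B$, and from this one deduces that $\B$ must itself be unital with $I_{\B}=\phi(I)$; the same characterisation, now valid in $\B$, closes the equivalence to $A\leq B\Leftrightarrow\phi(A)\leq\phi(B)$.

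For orthogonality preservation I would use the purely order-theoretic description
\[
AB=0\ \Longleftrightarrow\ \{C\in\Ap:\ C\leq A\ \text{and}\ C\leq B\}=\{0\}.
\]
The forward implication is a short compression chain: if $AB=0$ and $C\leq A$, then $BCB\leq BAB=0$, whence $C^{1/2}B=0$; if moreover $C\leq B$, multiplying $B-C\geq 0$ by $C^{1/2}$ on both sides gives $C^{2}\leq C^{1/2}BC^{1/2}=0$, and so $C=0$. Conversely, if $AB\neq 0$ then $A^{1/2}BA^{1/2}\neq 0$, and, using $A^{1/2}BA^{1/2}\leq\|B\|A$ and $A^{1/2}BA^{1/2}\leq\|A\|B$, the element $C=\delta\,(A^{1/2}BA^{1/2}-\epsilon)_{+}$ is, for sufficiently small $\epsilon,\delta>0$, a nonzero positive element with $C\leq A$ and $C\leq B$. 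Since $\phi$ is already an order isomorphism vanishing only at $0$, the right-hand side transfers along $\phi$, yielding $AB=0\Leftrightarrow\phi(A)\phi(B)=0$. Lemma \ref{lem:key} then supplies the Jordan $*$-isomorphism extending $\phi$.

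The step I expect to be the main obstacle is the bootstrap forcing $\B$ to be unital with $I_{\B}=\phi(I_{\A})$. At the point where this is needed, one has only the one-sided implication $A\leq B\Rightarrow\|\phi(A)+D\|\leq\|\phi(B)+D\|$ for all $D\in\Bp$, while the reverse characterisation inside $\B$ is not yet available because $\B$ has not yet been shown to be unital. The argument must therefore exploit the fact that $I_{\A}$ is the greatest element of the positive unit ball of $\A$, together with surjectivity, to produce a greatest element in the positive unit ball of $\B$, and then invoke the standard fact that the positive unit ball of a $C^*$-algebra admits a maximum if and only if the algebra has an identity.
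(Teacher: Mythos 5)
Your overall strategy---reduce to Lemma \ref{lem:key} by establishing norm, order and orthogonality preservation---is exactly the paper's, but two of the three ingredients have genuine problems.

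The most serious one is the orthogonality step. The purely order-theoretic characterization you propose, namely that $AB=0$ if and only if the only positive $C$ with $C\leq A$ and $C\leq B$ is $C=0$, is \emph{false} in noncommutative $C^*$-algebras. Take $P,Q$ two distinct rank-one projections in $M_2(\mathbb{C})$ that are not orthogonal (say at angle $45^\circ$). Any positive $C\leq P$ must be a scalar multiple of $P$ (a positive operator dominated by a rank-one projection vanishes on the orthogonal complement of its range), and likewise any $C\leq Q$ is a multiple of $Q$; hence the only common lower bound is $0$, yet $PQ\neq 0$. Consistently with this, the inequality $A^{1/2}BA^{1/2}\leq\|A\|B$ in your argument is wrong: what is true is $B^{1/2}AB^{1/2}\leq\|A\|B$, and the element you construct is dominated by $A$ but not in general by $B$. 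Transporting your condition along the order isomorphism therefore only yields that $\phi$ preserves the relation ``no common nonzero lower bound,'' which is strictly weaker than orthogonality. The paper's Lemma \ref{L:2} avoids this by characterizing $AB=0$ through the \emph{norm} identity $\|C+D\|=\max\{\|C\|,\|D\|\}$ for all $C\leq A$, $D\leq B$, a condition that is then transferred using norm-additivity together with the order isomorphism; some norm input beyond the order is genuinely needed here.

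The second gap is the one you yourself flag but do not close: the implication ``$\|A+X\|\leq\|B+X\|$ for all $X$ implies $A\leq B$'' is proved in your sketch only for unital algebras (via $X=\lambda I-B$), whereas to pass from $A\leq B$ to $\phi(A)\leq\phi(B)$ you need it in $\B$, which is not yet known to be unital. Your proposed bootstrap is circular: to show $\phi(I)$ is the greatest element of the positive unit ball of $\B$ you must convert the inequalities $\|\phi(A)+D\|\leq\|\phi(I)+D\|$, $D\in\Bp$, into $\phi(A)\leq\phi(I)$, which is precisely the missing non-unital implication (and the same issue blocks the ``WLOG $\A$ unital'' reduction, since $\phi$ is not yet known to be injective). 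The paper's Lemma \ref{L:1} resolves this by a separate argument valid in any $C^*$-algebra: representing the algebra on a Hilbert space, normalizing $\|A+B\|=1$, testing with $X=(A+B)^{1/n}-B$ and passing to the support projection $P=E_{A+B}(0,\infty)$ to get $A=PAP\leq P(I-P+B)P=B$. Some such device for the non-unital case is indispensable for your plan to go through.
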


Theorem \ref{T:1} gives an affirmative answer for Problem \ref{pr101} concerning the general class of unital $C^*$-algebras. We recall that in the particular case of von Neumann algebras, the result was proved in Theorem 2.7 in \cite{ML20b}.
Not surprisingly, the problem for general $C^*$-algebras is significantly more difficult to resolve.
By Kadison's result (in fact, by the isometric property of Jordan $*$-isomor\-phisms), the sufficiency parts of the former two statements are obvious.
For the necessity parts, we apply Lemma \ref{lem:key}.

It is plain that  a surjective map $\phi$ satisfying \eqref{E:1} preserves the norm; namely,
$$
\|\phi(A)\|=\frac{\|\phi(A) + \phi(A)\|}{2} = \frac{\|A+A\|}{2}=\|A\|, \quad A\in \Ap.
$$
The following shows that such a $\phi$ is necessarily an order isomorphism, and thus $\phi$ is bijective.

\begin{lemma}\label{L:1}
Let $\mathcal A$ be a $C^*$-algebra. For any $A,B\in \Ap$ we have
$A\leq B$ if and only if $\|A+X\| \leq \|B+X\|$ holds for all $X\in \Ap$
(or for all $X\in \App$ if $\A$ is unital).
\end{lemma}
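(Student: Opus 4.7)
The forward direction is immediate: if $A \leq B$ and $X \in \Ap$, then $A + X \leq B + X$, and the norm on a $C^*$-algebra is monotone on the positive cone. For the reverse direction in the unital case with $X \in \App$, I would make the choice $X = sI - B$ for $s > \|B\|$; this lies in $\App$. Then $\|B + X\| = \|sI\| = s$ and $\|A + X\| = \|(A - B) + sI\|$. For $s \geq \|A - B\|$ the element $(A-B) + sI$ is positive, so its norm equals $s + \max \sigma(A-B)$; the hypothesis then forces $\max \sigma(A-B) \leq 0$, i.e., $A \leq B$.

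For the general case with $X \in \Ap$ (covering the non-unital setting), the identity must be replaced by an approximate one. I would take $E_\epsilon = f_\epsilon(A+B)$, where $f_\epsilon:[0,\infty)\to[0,1]$ is continuous, $f_\epsilon(0) = 0$, and $f_\epsilon \equiv 1$ on $[\epsilon,\infty)$. Then $E_\epsilon \in \Ap$ with $\|E_\epsilon\| = 1$, and continuous functional calculus gives $sE_\epsilon \geq A + B \geq B$ for $s \geq \|A+B\|$, so $X_\epsilon := sE_\epsilon - B \in \Ap$ and $\|B + X_\epsilon\| = s$. The task becomes lower-bounding $\|A + X_\epsilon\|$ by $s + \max\sigma(A-B) - \delta$, with $\delta$ arbitrarily small.

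The lower bound comes via state evaluation in the enveloping von Neumann algebra $\A^{**}$: the spectral projections $q_\epsilon := \chi_{[\epsilon,\infty)}(A+B)$ increase, as $\epsilon \searrow 0$, to the support projection of $A+B$, and since $\ker(A+B) \subseteq \ker A \cap \ker B \subseteq \ker(A-B)$, this dominates the support of $A-B$. Consequently $\max \sigma(q_\epsilon (A-B) q_\epsilon) \nearrow \max \sigma(A-B) =: \beta$. Given $\delta > 0$, pick $\epsilon$ small and a unit vector $\xi$ in the range of $q_\epsilon$ with $\langle (A-B)\xi, \xi\rangle > \beta - \delta$; the vector state $\omega_\xi$ restricts to a state $\psi$ on $\A$ with $\psi(E_\epsilon) = \langle q_\epsilon \xi, \xi\rangle = 1$ and $\psi(A-B) > \beta - \delta$. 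Then
\[
\|A + X_\epsilon\| \;\geq\; \psi(A + X_\epsilon) \;=\; \psi(A-B) + s\psi(E_\epsilon) \;>\; s + \beta - \delta,
\]
which, together with the hypothesis $\|A + X_\epsilon\| \leq s$, forces $\beta < \delta$; letting $\delta \to 0$ gives $\beta \leq 0$, i.e., $A \leq B$. The main obstacle is precisely this non-unital step: producing a single $X \in \Ap$ that plays the role of $sI - B$ while simultaneously admitting a state that nearly achieves $\max \sigma(A-B)$ on the subspace where $X + B$ behaves like a constant.
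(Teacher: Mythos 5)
Your proof is correct, and your unital step (testing with $X=sI-B$) is essentially the paper's. In the non-unital direction you take a genuinely different route: the paper normalizes $\|A+B\|=1$, tests with the single sequence $X_n=(A+B)^{1/n}-B\in\Ap$, passes to the strong limit to obtain $A+P-B\leq I$ with $P=E_{A+B}(0,\infty)$, and then compresses by $P$ (using $E_A(0,\infty),E_B(0,\infty)\leq P$) to read off $A=PAP\leq PBP=B$; you instead test with $X_\epsilon=sf_\epsilon(A+B)-B$ and derive a numerical contradiction by evaluating against vector states supported on the range of $\chi_{[\epsilon,\infty)}(A+B)$, using $\ker(A+B)\subseteq\ker A\cap\ker B\subseteq\ker(A-B)$. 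Both arguments rest on the same structural fact --- a continuous function of $A+B$ vanishing at $0$ serves as an approximate identity dominating $B$ on the common support --- but the paper's version is a short order-theoretic limit argument, while yours is a state-evaluation $\epsilon$--$\delta$ argument that avoids identifying any strong limits. One small point to tighten: the inequality $sE_\epsilon\geq A+B$ does \emph{not} follow from the stated properties of $f_\epsilon$ alone (a cutoff vanishing on $[0,\epsilon-\delta']$ violates $sf_\epsilon(t)\geq t$ just below $\epsilon$); you should fix a concrete choice such as $f_\epsilon(t)=\min(1,t/\epsilon)$, for which $sf_\epsilon(t)\geq t$ on $[0,\|A+B\|]$ whenever $s\geq\|A+B\|\geq\epsilon$. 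With that choice, and noting that you only need $\|A+X_\epsilon\|\geq\langle (A+X_\epsilon)\xi,\xi\rangle$ rather than the full state machinery, everything goes through.
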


\begin{proof}
The nontrivial part of the statement is the sufficiency.

Assume first that $\A$ is unital and
that $\|A+X\| \leq \|B+X\|$ holds for all $X\in \App$. Pick a positive scalar $t$ such that $X=tI-B\in \App$.
We have $\|A+(tI-B)\|\leq t$ which implies that $A+(tI-B)\leq tI$, and thus $A\leq B$.

Suppose now $\A$ does not have an identity. Apparently, without  loss of generality, we can assume that $\|A+B\|= 1$ and
we also can assume that $\A$ is a norm closed *-subalgebra of the full operator algebra over a Hilbert space.  Observe that
$B\leq A+B\leq (A+B)^{1/n}\uparrow P=E_{A+B}(0,\infty)$, which is
the support projection of $A+B$, as $n\to\infty$ in the strong operator topology.  It follows from
$$
\|A + ((A+B)^{1/n}-B)\| \leq \|B  + (((A+B)^{1/n}-B)\| = \|A+B\|^{1/n}=1
$$
that
$
A + P - B \leq I.
$
Since $E_A(0,\infty) = \sup_n A^{1/n} \leq \sup_n (A+B)^{1/n} = P$ and similarly
$E_B(0,\infty)\leq P$, we have
$$
A=PAP\leq P(I-P +B)P =B.
$$
\end{proof}

To prove that the map in Theorem \ref{T:1} preserves orthogonality in both directions,
we need the following characterization of zero product of positive elements.

\begin{lemma}\label{L:2}
Let $\mathcal A$ be a $C^*$-algebra. For any $A,B\in \Ap$, the following assertions are equivalent
\begin{enumerate}[(1)]
\item $AB=0$,
\item $\|C+D\| =\max\{\|C\|,\|D\|\}
\ \text{holds for any}\  C,D\in \Ap\ \text{with}\ C\leq A \ \text{and}\ D\leq B$.
\end{enumerate}
\end{lemma}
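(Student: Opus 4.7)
The plan is to handle the two implications separately.

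For $(1) \Rightarrow (2)$, I would exploit that $AB = 0$ propagates to all smaller positives: if $0 \leq C \leq A$ and $0 \leq D \leq B$, then by operator monotonicity of $t \mapsto t^{1/n}$ the support projections in $\mathcal{A}^{**}$ satisfy $\operatorname{supp}(C) \leq \operatorname{supp}(A)$ and $\operatorname{supp}(D) \leq \operatorname{supp}(B)$, while $AB = 0$ makes $\operatorname{supp}(A)$ and $\operatorname{supp}(B)$ orthogonal. Thus $CD = 0$, so $C$ and $D$ commute, and in the commutative $C^*$-subalgebra they generate Gelfand duality immediately yields $\|C + D\| = \max(\|C\|, \|D\|)$ because the corresponding continuous functions have disjoint open supports.

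For $(2) \Rightarrow (1)$, I proceed in two steps. First I extract an infinitesimal consequence of (2): given any nonzero $C \leq A$ (and assuming $B \neq 0$, otherwise there is nothing to prove), plug $D = \epsilon\|B\|^{-1}B$ into (2) for $\epsilon \in (0, \min(\|C\|,\|B\|)]$. Then $D \leq B$ and $\|D\| = \epsilon \leq \|C\|$, so (2) becomes $\|C + D\| = \|C\|$. For any state $\phi$ on $\A$ (or its unitization) with $\phi(C) = \|C\|$ -- such states exist by weak-$*$ compactness of the state space -- the chain $\|C\| = \phi(C) + \phi(D) \leq \|C + D\| = \|C\|$ together with positivity of $\phi(D)$ forces $\phi(D) = 0$, hence $\phi(B) = 0$. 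Conclusion: every state peaking on some $C \leq A$ must annihilate $B$.

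The second step feeds this with the right family of $C$'s. For each $\lambda \in (0, \|A\|]$, set $C_\lambda := g_\lambda(A)$ via continuous functional calculus with $g_\lambda(t) = \min(t, \lambda)$, so $0 \leq C_\lambda \leq A$ and $\|C_\lambda\| = \lambda$. Passing to the universal representation $\pi$ on $H$ so that every state is a vector state $\phi_\xi$, the equality $\phi_\xi(C_\lambda) = \lambda$ is equivalent to $g_\lambda(\pi(A))\xi = \lambda\xi$, which holds precisely for $\xi$ in the spectral subspace $E_{\pi(A)}([\lambda, \|A\|])H$. The first step then forces $\langle \pi(B)\xi, \xi\rangle = 0$ on that subspace, i.e.,
$$\pi(B)^{1/2}E_{\pi(A)}([\lambda, \|A\|]) = 0.$$
Letting $\lambda \downarrow 0$, the spectral projections increase strongly to the support projection $P_A$ of $\pi(A)$, so $\pi(B)^{1/2}P_A = 0$. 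Since $\pi(A) = P_A\pi(A)$, this yields $\pi(B)\pi(A) = 0$, and faithfulness of $\pi$ delivers $BA = 0$, hence $AB = 0$.

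The main obstacle I foresee is this last passage from state-level information to the operator identity $AB = 0$: the spectral projections of $A$ generally lie only in $\mathcal{A}^{**}$, so (2) cannot be applied to them directly. The fix is to replace them by the continuous-calculus surrogates $g_\lambda(A) \in \mathcal{A}$ and defer the passage to honest spectral projections to the ambient $B(H)$ of the universal representation, where strong-operator convergence of $E_{\pi(A)}([\lambda, \|A\|])$ to $P_A$ closes the argument.
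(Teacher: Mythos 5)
Your proof is correct, but the second implication runs along a genuinely different track from the paper's. For $(2)\Rightarrow(1)$ the paper tests the hypothesis \emph{symmetrically}: it approximates the spectral projections $E_A(a,\infty)$ and $E_B(a,\infty)$ from below by continuous-functional-calculus elements $ag_n(A)\leq A$ and $ag_n(B)\leq B$, passes to the strong limit to obtain $\|E_A(a,\infty)+E_B(a,\infty)\|=\max\{\|E_A(a,\infty)\|,\|E_B(a,\infty)\|\}=1$, and then uses the elementary fact that a sum of two projections of norm $\leq 1$ forces the projections to be orthogonal; letting $a\downarrow 0$ gives orthogonal range projections and hence $AB=0$. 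You instead use the hypothesis \emph{asymmetrically} — $D$ is always the fixed element $\epsilon\|B\|^{-1}B$ while only $C$ ranges over the truncations $g_\lambda(A)=\min(A,\lambda)$ — and your extraction mechanism is a state-peaking argument ($\phi(C)=\|C\|$ and $\|C+D\|=\|C\|$ force $\phi(D)=0$) followed by the identification of peaking vector states with vectors in the spectral subspace $E_{\pi(A)}([\lambda,\|A\|])H$. Both routes ultimately land on the same spectral-projection statement $\pi(B)^{1/2}E_{\pi(A)}([\lambda,\infty))=0$ for all $\lambda>0$, and both correctly defer the use of honest spectral projections (which need not lie in $\A$) to a faithful representation. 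What your version buys is that the hypothesis (2) is only invoked for a one-parameter family of pairs $(g_\lambda(A),\epsilon\|B\|^{-1}B)$, so it in fact proves a formally stronger statement; what the paper's version buys is that it avoids states and representations of peaking functionals entirely, relying only on monotone strong convergence and the norm-one criterion for sums of projections. Your $(1)\Rightarrow(2)$ direction (support projections in $\A^{**}$, $CD=0$, then Gelfand) is a fine expansion of what the paper simply declares easy.
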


\begin{proof}
The implication from (1) to (2) is easy. As for the converse, we can assume that $\A$ is a norm closed *-subalgebra of the full operator algebra
over a Hilbert space $H$. Pick $A,B\in \Ap$ such that
for any $C,D\in \Ap$ with $C\leq A$, $D\leq B$ we have $\|C+D\|=\max\{\|C\|,\|D\|\}.$ Let $E_A,E_B$ be the spectral measures of $A,B$, respectively.
Consider the indicator function of an open interval $(a,\infty)$, where $a>0$. Clearly, there is a monotone increasing sequence $\{g_n\}$ of
nonnegative continuous functions  vanishing at zero, which converges pointwise to that indicator function.
It follows that $ag_n(A), ag_n(B)\in \Ap$ and  $ag_n(A)\leq A$, $ag_n(B)\leq B$ for every $n\in \mathbb N$. Moreover, we have that
$g_n(A) \to E_A(a,\infty)$, $g_n(B) \to E_B(a,\infty)$ monotone increasingly in the strong operator topology.
Hence $ag_n(A)+ag_n(B)\to aE_A(a,\infty)+aE_B(a,\infty)$ monotone increasingly in the strong operator topology. It follows that
$\|ag_n(A)+ag_n(B)\|\to \|aE_A(a,\infty)+aE_B(a,\infty)\|$.
On the other hand, by our assumption,
\begin{eqnarray*}
\|ag_n(A)+ag_n(B)\|&=&\max\{\|ag_n(A)\|,\|ag_n(B)\|\}\\
&\to& \max\{\|aE_A(a,\infty)\|, \|aE_B(a,\infty)\|\}.
\end{eqnarray*}
It then follows that
$$\|E_A(a,\infty)+E_B(a,\infty)\|=\max\{\|E_A(a,\infty)\|, \|E_B(a,\infty)\|\}.$$
If one of the projections $E_A(a,\infty), E_B(a,\infty)$ is nonzero, then we have
$$\|E_A(a,\infty)+E_B(a,\infty)\|= 1$$
implying that
$E_A(a,\infty)+E_B(a,\infty)\leq I$.
Consequently, the projections $E_A(a,\infty)$, $E_B(a,\infty)$ are orthogonal to each other. Since it holds for all $a>0$, we conclude
\[E_A(0,\infty) E_B(0,\infty)=0.\]
This means that the projections onto the closures of the ranges of $A$ and $B$ are orthogonal and hence, we have $AB=0$.
\end{proof}

We are now in a position to prove Theorem \ref{T:1}.

\begin{proof}[Proof of Theorem \ref{T:1}]
Assume that $\phi:\Ap\to \Bp$ is a surjective map which satisfies \eqref{E:1}. It is plain that $\phi$ preserves the norm.
By Lemma \ref{L:1}, $\phi$ preserves the order in both
directions.
By Lemma \ref{L:2}, $\phi$ preserves orthogonality in both directions.
It then follows from Lemma \ref{lem:key} that there is a Jordan $*$-isomorphism $J:\A\to \B$ extending $\phi$.
\end{proof}

For the proof of Theorem \ref{T:2} we need the following simple lemma.

\begin{lemma}\label{lem:App2Ap}
Let $\A,\B$ be unital $C^*$-algebras.
  Let $\phi: \App\to \Bpp$ be an order isomorphism fixing all positive scalar multiples of the identity.
   For any $\epsilon >0$, define $\psi:\Ap\to \Bp$ by $\psi(A)=\phi(A+\epsilon I) - \epsilon I$.
  Then $\psi$ is an order isomorphism from $\Ap$ onto $\Bp$.  Moreover, if $\phi$ preserves the norm, then so does $\psi$.
\end{lemma}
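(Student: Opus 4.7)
The plan is to verify in order: (i) that $\psi$ actually lands in $\Bp$; (ii) that $\psi$ is onto $\Bp$; (iii) order preservation in both directions; and finally (iv) the norm statement. Each step should be short because the map $A\mapsto A+\epsilon I$ is a bijective order isomorphism from $\Ap$ onto $\{X\in \App : X\geq \epsilon I\}$, and everything will reduce to transporting structure through this shift and through $\phi$.

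For (i), if $A\in \Ap$, then $A+\epsilon I\geq \epsilon I$ in $\App$, so by the order-preserving property of $\phi$ together with the normalization $\phi(\epsilon I)=\epsilon I$ we obtain $\phi(A+\epsilon I)\geq \epsilon I$, hence $\psi(A)=\phi(A+\epsilon I)-\epsilon I\in \Bp$. For (ii), given $B\in \Bp$, apply the same argument to $\phi^{-1}$ (which is also an order isomorphism fixing positive multiples of $I$): write $B+\epsilon I\geq \epsilon I$ in $\Bpp$, so $\phi^{-1}(B+\epsilon I)\geq \epsilon I$, giving an $A:=\phi^{-1}(B+\epsilon I)-\epsilon I\in \Ap$ with $\psi(A)=B$. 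For (iii), for any $A_1,A_2\in \Ap$, the chain of equivalences $A_1\leq A_2 \Leftrightarrow A_1+\epsilon I\leq A_2+\epsilon I \Leftrightarrow \phi(A_1+\epsilon I)\leq \phi(A_2+\epsilon I) \Leftrightarrow \psi(A_1)\leq \psi(A_2)$ is immediate.

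For the norm assertion, the key observation is the following elementary $C^*$-identity: for every positive element $X$ in a unital $C^*$-algebra and every $\epsilon>0$,
\[
\|X+\epsilon I\|=\|X\|+\epsilon,
\]
because the spectrum of $X+\epsilon I$ is the spectrum of $X$ translated by $\epsilon$, and $X\geq 0$. Dually, whenever $Y\geq \epsilon I$, one has $\|Y-\epsilon I\|=\|Y\|-\epsilon$ for the same reason. Assuming $\phi$ is norm preserving, these two identities combine to give
\[
\|\psi(A)\|=\|\phi(A+\epsilon I)-\epsilon I\|=\|\phi(A+\epsilon I)\|-\epsilon=\|A+\epsilon I\|-\epsilon=\|A\|,
\]
where the second equality uses $\phi(A+\epsilon I)\geq \epsilon I$ from step (i).

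I do not anticipate a real obstacle here; the only small subtlety is remembering to invoke $\phi(\epsilon I)=\epsilon I$ at both the construction stage and the norm-computation stage, since without that normalization $\psi$ would not even be guaranteed to take values in $\Bp$.
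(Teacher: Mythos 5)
Your proposal is correct and follows essentially the same route as the paper: surjectivity via applying the order isomorphism $\phi^{-1}$ (fixing $\epsilon I$) to $B+\epsilon I$, order preservation by the obvious chain of equivalences, and the norm computation via the identity $\|Y-\epsilon I\|=\|Y\|-\epsilon$ for $Y\geq \epsilon I$. You are in fact slightly more careful than the paper in explicitly noting that $\phi(A+\epsilon I)\geq \epsilon I$ is what licenses that identity in the middle step.
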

\begin{proof}
  For any given positive number $\epsilon$,  the map $\psi:\Ap\to \Bp$  defined by
$\psi(A)=\phi(\epsilon I+A)-\epsilon I$ is clearly  injective.
To see that $\psi$ is a surjective map, we observe that for any $B\in \Bp$, there is an $A_\epsilon\in \App$ such that
$\phi(A_\epsilon) = B + \epsilon I$.  Since $\phi^{-1}$ is an order isomorphism sending $\epsilon I$ to $\epsilon I$,
we see that
\[A_\epsilon = \phi^{-1}(B + \epsilon I) \geq \phi^{-1}(\epsilon I) = \epsilon I.\]
Therefore, $A= A_\epsilon - \epsilon I \in \A_+$ exists and gives $\psi(A)=B$.
It is plain that $\psi$ is  an order isomorphism.

We show that $\psi$  preserves the norm when $\phi$ does.
Observe that for $B\in \A_+$ with $B \geq \epsilon I$ we have $\|B - \epsilon I\| = \|B\| -\epsilon$.  Therefore, for any $A\in \A_+$ we can compute
\begin{align*}
\|\psi(A)\| &= \|\phi(A + \epsilon I) - \epsilon I\| = \|\phi(A + \epsilon I)\| - \epsilon\\
&= \|A+\epsilon I\| - \epsilon = \|(A+\epsilon I) - \epsilon I\|=\|A\|.
\end{align*}
\end{proof}

\begin{proof}[Proof of Theorem \ref{T:2}]
Assume that $\phi:\App\to \Bpp$ is a surjective map satisfying \eqref{E:2}.
We can show just as in the proof of Theorem \ref{T:1}
that $\phi$ is an order isomorphism (in particular $\phi$ is injective) and, following the argument in the second paragraph of the proof of Lemma \ref{lem:key} that $\phi$
fixes all positive scalar multiples of the identity.
By Lemma \ref{lem:App2Ap}, we define an order isomorphism $\psi: \Ap\to \Bp$ by
setting
$\psi(A)=\phi(\epsilon I+A)-\epsilon I$ for any $A\in \Ap$.
We also have
\begin{eqnarray*}
\|\psi(A)+\psi(B)\|&=&\|\phi(\epsilon I +A)+\phi(\epsilon I +B)-2\epsilon I\|\\
&=&\|\phi(\epsilon I +A)+\phi(\epsilon I +B)\|-2\epsilon\\
&=&\|(\epsilon I +A)+(\epsilon I +B)\|-2\epsilon \\
&=&%\|(\epsilon I +A)+(\epsilon I +B) -2\epsilon I\|=
\| A+B\|
\end{eqnarray*}
for any $A, B\in \Ap$.

Applying Theorem \ref{T:1}, we obtain that $\psi$ extends to a Jordan $*$-isomorphism between $\A$ and $\B$.
In particular, $\psi$ is affine. It now easily follows that $\phi$ is also affine on the set of all elements of $\App$
which are bounded from below by $\epsilon I$. Since this holds for any positive $\epsilon$,
we obtain that $\phi$ is affine. The structure of affine bijections between positive definite cones of unital $C^*$-algebras
is known; see, e.g.,
 Proposition 1 in \cite{ML17e}. That result says that those maps can be obtained as the composition
 $A\mapsto CJ(A)C$ of a Jordan $*$-isomorphism $J$ between the
 underlying $C^*$-algebras and a two-sided multiplication by an invertible positive element $C$.
Since we have $\phi(I)=I$, that invertible positive element $C$ is the identity and we conclude that $\phi$ extends to
the  Jordan $*$-isomorphism $J$. This completes the proof of the theorem.
\end{proof}

\subsection{Preservers of the norm of the geometric mean}

We next consider the Kubo-Ando geometric mean $\#$ and the preservers of its norm. On the positive definite cone $\App$ of
a unital $C^*$-algebra $\A$, the operation $\#$ is defined as follows
\[
A\#B=A^{1/2}(A^{-1/2}BA^{-1/2})^{1/2}A^{1/2}, \quad A,B\in \App.
\]
As a part of Theorem 1 in \cite{CMM} we have the following statement.

\begin{theorem}[Chabbabi,  Mbekhta and  Moln\'ar \cite{CMM}]\label{thm:GM}
Let $\A, \B$ be unital $C^*$-algebras and assume that $\phi: \App \to \Bpp$ is a surjective map. It satisfies
\begin{equation*}\label{E:11}
\|\phi(A)\# \phi(B)\|= \|A\#B\|, \quad A,B\in \App
\end{equation*}
if and only if there is a Jordan $*$-isomorphism $J:\A\to \B$ extending $\phi$.
\end{theorem}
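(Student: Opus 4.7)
The plan is to mimic the proof of Theorem~\ref{T:2}, with the role of linearity replaced by the scaling identity $(tA)\# B=t^{1/2}(A\# B)$ and the joint monotonicity of the Kubo--Ando geometric mean. The cornerstone should be an order characterization of the form
\[
A\leq B \iff \|A\# X\|\leq\|B\# X\| \text{ for every } X\in\App,
\]
in which the forward direction is immediate from monotonicity and the backward direction is the delicate step: in a faithful representation $\pi:\A\to B(H)$, a choice of approximate rank-one element $X_\epsilon=\xi\xi^*+\epsilon I$ together with the limit
\[
\lim_{\epsilon\to 0^+}\|A\# X_\epsilon\|=\frac{\|\xi\|^2}{\sqrt{\langle A^{-1}\xi,\xi\rangle}}
\]
should force $\langle A^{-1}\xi,\xi\rangle\geq\langle B^{-1}\xi,\xi\rangle$ for every $\xi$, hence $A^{-1}\geq B^{-1}$ and thus $A\leq B$. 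When $\A$ is not concretely represented, Kadison's transitivity theorem (or a density argument) lets us find $X\in\App$ whose image under $\pi$ is close enough to $X_\epsilon$ for the inequality to pass to the limit.

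With this characterization in hand, the rest of the proof is short. Norm preservation $\|\phi(A)\|=\|A\|$ follows at once from $A\# A=A$. To pin down $\phi(I)$, set $P=\phi(I)$: combining $I\# B=B^{1/2}$ with the hypothesis and surjectivity yields $\|P\# Y\|=\|Y\|^{1/2}$ for every $Y\in\Bpp$. Taking $Y=P$ (with $P\# P=P$) yields $\|P\|=1$, so $P\leq I$; taking $Y=P^{-1}$ (with $P\# P^{-1}=I$) yields $\|P^{-1}\|=1$, so $P\geq I$; therefore $P=I$.

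The order characterization, applied in both $\A$ and $\B$ and combined with surjectivity of $\phi$, then gives
\begin{align*}
A\leq B &\iff \|A\# X\|\leq\|B\# X\| \text{ for every } X\in\App\\
&\iff \|\phi(A)\# Y\|\leq\|\phi(B)\# Y\| \text{ for every } Y\in\Bpp\\
&\iff \phi(A)\leq\phi(B),
\end{align*}
where the middle equivalence uses $\|\phi(A)\#\phi(X)\|=\|A\# X\|$ and lets $\phi(X)$ range over $\Bpp$. Similarly, the scaling identity $(tA)\# B=t^{1/2}(A\# B)$ produces $\|\phi(tA)\# Y\|=\|(t\phi(A))\# Y\|$ for every $Y\in\Bpp$, and because the family $\{\|\cdot\# Y\|\}_{Y\in\Bpp}$ separates elements of $\Bpp$ by the order characterization, this forces $\phi(tA)=t\phi(A)$. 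So $\phi$ is a positive homogeneous order isomorphism between $\App$ and $\Bpp$.

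Proposition~\ref{prop-Monlar} then supplies a Jordan $*$-isomorphism $J:\A\to\B$ and an element $C\in\Bpp$ such that $\phi(A)=CJ(A)C$ for all $A\in\App$. Evaluating at $A=I$ and using $\phi(I)=I=J(I)$ forces $C^2=I$, hence $C=I$, and therefore $\phi=J|_{\App}$, as desired. The principal obstacle is the converse half of the order characterization; making this work for an abstract $C^*$-algebra $\A$ is where the technical effort concentrates, and it is natural to use a faithful representation together with rank-one perturbations (supplemented by Kadison transitivity) to obtain the required vector-state inequalities.
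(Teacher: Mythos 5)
Your high-level architecture is exactly the one the paper has in mind: Theorem \ref{thm:GM} is imported from \cite{CMM}, and the paper notes that its proof rests on the order characterization $A\leq B \Leftrightarrow \|A\# X\|\leq \|B\# X\|$ for all $X\in \App$ (Lemma 9 of \cite{CMM}). Your routine steps are all correct and match that route: norm preservation from $A\# A=A$; the argument $\phi(I)=I$ via $Y=P$ and $Y=P^{-1}$; the order-isomorphism equivalences; positive homogeneity from $(tA)\# B=t^{1/2}(A\# B)$ together with the fact that the family $\{\|\cdot\,\# Y\|\}_{Y\in\Bpp}$ separates points of $\Bpp$; and the final application of Proposition \ref{prop-Monlar} with $C^2=I$ forcing $C=I$.

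The genuine gap is in your proof of the hard implication of the order characterization, which is precisely the step the paper flags as ``far more difficult than that of Lemma \ref{L:1}'' and outsources to \cite{CMM}. Your test elements $X_\epsilon=\xi\xi^*+\epsilon I$ must lie in $\App$, but in a general unital $C^*$-algebra the rank-one operators of a faithful representation $\pi$ do not belong to $\pi(\A)$, and there need not exist any element of $\pi(\A)$ norm-close to $\xi\xi^*+\epsilon I$: for $\A=C(K)$ with $K$ connected and infinite, represented by multiplication operators on $L^2(\mu)$, every element of $\pi(\A)$ is at distance at least $\|\xi\|^2/2$ from $\xi\xi^*+\epsilon I$. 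Kadison's transitivity theorem cannot repair this: it concerns irreducible representations and produces elements with a prescribed action on a finite-dimensional subspace, not norm approximants of a rank-one perturbation of a scalar (and for commutative algebras the irreducible representations are one-dimensional, so it gives nothing). Consequently the inequality $\langle A^{-1}\xi,\xi\rangle\geq\langle B^{-1}\xi,\xi\rangle$ for all $\xi$ is not obtained by your argument; a correct proof must build the test elements $X$ inside the algebra (from $A$ and $B$ themselves, or via excision-type approximations of pure states), which is where the actual technical work of Lemma 9 in \cite{CMM} lies. A smaller remark: even where $X_\epsilon$ is available, your limit formula $\lim_{\epsilon\to 0^+}\|A\# X_\epsilon\|=\|\xi\|^2/\sqrt{\langle A^{-1}\xi,\xi\rangle}$ needs justification, since the norm is only lower-bounded along decreasing strong limits; it does follow, but only after invoking the subadditivity $A\#(B+C)\leq A\# B+A\# C$ to get the matching upper bound.
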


In fact, the result in \cite{CMM} was formulated also for weighted geometric means. Its proof was based on a characterization of the order similar to Lemma \ref{L:1}. In Lemma 9 in \cite{CMM} it is shown that for any $A, B \in \App$ we have
$$
A\leq B\quad\text{if and only if}\quad \|A\# X\|\leq \|B\# X\|, \quad X\in \App
$$
(the proof  of this statement was far more difficult than that of Lemma \ref{L:1} here).

\subsection{Preservers of the norm of the harmonic mean}

As the third most important type of means, the harmonic mean $A!B$ is defined on the positive definite cone $\App$
of a unital $C^*$-algebra $\A$ by
\[
A!B=2(A^{-1} + B^{-1})^{-1}, \quad A,B\in \App.
\]
Let $\A,\B$ be unital $C^*$-algebras and
$\phi:\App\to \Bpp$ be a surjective map which satisfies
\begin{equation}\label{E:5}
\|\phi(A)!\phi(B)\|=\|A!B\|, \quad A,B\in \App.
\end{equation}
In order to describe $\phi$, we
observe that, analogous to  Lemma \ref{L:1}, we have

\begin{lemma}\label{lem:HM-order}
Let $\A$ be a unital $C^*$-algebra.
For any $A,B\in \App$ we have
$$
A\leq B\quad\text{if and only if}\quad
\|A!X\| \leq \| B!X\|, \quad X\in \App.
$$
\end{lemma}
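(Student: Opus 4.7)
\emph{Proof plan.} The implication $(\Rightarrow)$ is a direct calculation. If $A\leq B$, then the order-reversing property of inversion on $\App$ gives $B^{-1}\leq A^{-1}$, hence $A^{-1}+X^{-1}\geq B^{-1}+X^{-1}$, and inverting once more yields $A!X=2(A^{-1}+X^{-1})^{-1}\leq 2(B^{-1}+X^{-1})^{-1}=B!X$, so that $\|A!X\|\leq\|B!X\|$ follows from the monotonicity of the norm on positive elements.

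For the converse I would use the elementary spectral fact that, for any positive invertible element $P$, we have $\|P^{-1}\|=1/\min\sigma(P)$. The assumed inequality $\|A!X\|\leq\|B!X\|$ therefore translates into
$$
\min\sigma(A^{-1}+X^{-1})\ \geq\ \min\sigma(B^{-1}+X^{-1}),\qquad X\in\App.
$$
Because the map $X\mapsto X^{-1}$ is a bijection of $\App$ onto itself, this is the same as asserting $\min\sigma(A^{-1}+Y)\geq\min\sigma(B^{-1}+Y)$ for every $Y\in\App$. Specializing to $Y=tI-B^{-1}$ for any scalar $t>\|B^{-1}\|$ (which then lies in $\App$), we obtain $B^{-1}+Y=tI$ and hence $\min\sigma(A^{-1}-B^{-1}+tI)\geq t$, which is equivalent to $A^{-1}-B^{-1}+tI\geq tI$. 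This forces $A^{-1}\geq B^{-1}$, i.e., $A\leq B$, as desired.

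In short, taking inverses of both arguments turns the harmonic mean into an affine rescaling of the arithmetic mean of inverses, so after the change of variable $Y=X^{-1}$ the lemma reduces essentially to the same ``choose $Y=tI$ minus something'' trick used in Lemma \ref{L:1}. I do not foresee a serious obstacle; the only steps needing a moment of care are verifying that the substituted $Y$ belongs to $\App$ (which is precisely why $t>\|B^{-1}\|$ is required) and the conversion of a norm inequality between inverses to a spectral-minimum inequality. Thus this case is substantially easier than the analogous geometric-mean characterization (Lemma 9 in \cite{CMM}), whose proof is genuinely technical.
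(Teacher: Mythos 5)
Your proof is correct and is essentially the paper's own argument: the paper also proves sufficiency by testing against $X=(tB-I)^{-1}B$ with $tB-I\in\App$, which is exactly your $X=Y^{-1}$ for $Y=tI-B^{-1}$, $t>\|B^{-1}\|$, yielding $B!X=(2/t)I$ and then $A^{-1}+tI-B^{-1}\geq tI$. Your rewriting via $\|P^{-1}\|=1/\min\sigma(P)$ and the substitution $Y=X^{-1}$ just makes the same computation more transparent.
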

\begin{proof}
We verify the sufficiency only.
Let $X=(tB-I)^{-1}B$ for any $t> 0$ with $tB- I\in \App$.
One can verify that the following implications are valid
$$
\|A!X\| \leq \|B!X\| = {2}/{t} \quad\implies \quad t \leq {A}^{-1} + (tB-I)B^{-1} \quad\implies\quad
A\leq B.
$$
\end{proof}

It follows from Lemma \ref{lem:HM-order} that the map $\phi$ in \eqref{E:5} above
is necessarily an order isomorphism between $\App$ and $\Bpp$.
Clearly, it also preserves the norm of elements.
Therefore,
as in the proof of Lemma \ref{lem:key},
we obtain that
%\begin{align}\label{eq:HMhomo}
 $\phi(tI)=tI$ holds for every positive real number $t$.
%\end{align}

Unfortunately, the above observation is practically all what we can say.
We do not have any further idea from here in the description of $\phi$ in the setting of general $C^*$-algebras.
Namely, we are unable to prove that $\phi$ is positive homogeneous which would be necessary to apply
Proposition \ref{prop-Monlar} to get the structure of $\phi$ as we have done for the case of the arithmetic mean.

However, something can still be done for less general $C^*$-algebras. The reason is that there are situations where we do not need to assume positive homogeneity of order
isomorphisms and we are still able to get the complete description of those isomorphisms.
The first of such results were given in \cite{Sch79} for order isomorphisms of positive cones of commutative
$C^*$-algebras.  Although the description of such maps can be very complicated in general (see Theorem 6.1 in \cite{Sch79}),
the one for those fixing positive scalars just fits our task; namely, we have

\begin{proposition}[Sch\"{a}ffer {\cite[Corollary 8.4]{Sch79}}]\label{prop:Schaffer}
Let $\mathrm{X},\mathrm{Y}$ be compact Hausdorff spaces.  Let $\phi: C(\mathrm{X})_+^{-1}\to C(\mathrm{Y})_+^{-1}$ be an order isomorphism such that
$\phi(tg) = t\phi(g)$ for
some $g\in C(\mathrm{X})_+^{-1}$ and all $t>0$.  Then there is a homeomorphism $\sigma: \mathrm{Y}\to \mathrm{X}$ and an $h\in C(\mathrm{Y})_+^{-1}$ such that
  $$
  \phi(f)(y) = h(y)f(\sigma(y)), \quad f\in C(\mathrm{X})_+^{-1},\, y\in \mathrm{Y}.
  $$
\end{proposition}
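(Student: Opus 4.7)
My plan is to reduce the hypothesis at $g$ to a normalization at the constant function $\mathbf{1}_X$, identify a pointwise map $\sigma:\mathrm{Y}\to\mathrm{X}$ from the order structure, and then undo the reduction to extract the weight $h$.

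\textbf{Reduction.} I would replace $\phi$ by $\tilde\phi(f)=\phi(gf)/\phi(g)$, where the product and quotient are pointwise. Multiplication by $g\in C(\mathrm{X})_+^{-1}$ and division by $\phi(g)\in C(\mathrm{Y})_+^{-1}$ are order automorphisms of the respective cones, so $\tilde\phi:C(\mathrm{X})_+^{-1}\to C(\mathrm{Y})_+^{-1}$ is still an order isomorphism, and the hypothesis $\phi(tg)=t\phi(g)$ translates to $\tilde\phi(t\mathbf{1}_X)=t\mathbf{1}_Y$ for every $t>0$. Since $f\leq t\mathbf{1}_X$ is equivalent to $\|f\|_\infty\leq t$ and $f\geq s\mathbf{1}_X$ is equivalent to $\min_X f\geq s$, applying the order isomorphism shows that $\tilde\phi$ preserves the supremum norm and the minimum value of each function.

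\textbf{Construction of $\sigma$.} Fix $y\in\mathrm{Y}$. Call a nonempty closed set $F\subseteq\mathrm{X}$ \emph{admissible at $y$} if there exist $r>0$ and $f\in C(\mathrm{X})_+^{-1}$ with $f\equiv r$ on $F$ and $\tilde\phi(f)(y)=r$; by Urysohn's lemma, candidate witnesses for $f$ at any prescribed level $r$ always exist on $\mathrm{X}$. Define
\begin{equation*}
\sigma(y)\in\bigcap\{F\subseteq\mathrm{X}\text{ closed and admissible at }y\}.
\end{equation*}
The central claim is that this intersection consists of exactly one point. \emph{Nonemptiness} follows from the finite intersection property of closed sets in the compact space $\mathrm{X}$: given finitely many admissible sets with empty intersection, one splices the witnessing peaking functions via Urysohn interpolation into a single $f$ whose image $\tilde\phi(f)$ at $y$ cannot simultaneously satisfy the level constraints imposed by the individual witnesses, contradicting the order isomorphism property. \emph{Uniqueness} follows because $C(\mathrm{X})_+^{-1}$ separates the points of $\mathrm{X}$: two distinct candidates for $\sigma(y)$ could be separated by a peaking function, forcing $\tilde\phi(f)(y)$ to take two incompatible values. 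Once $\sigma(y)$ is defined, the identity $\tilde\phi(f)(y)=f(\sigma(y))$ follows by shrinking admissible closed sets around $\sigma(y)$ and squeezing with $\max_F f$ and $\min_F f$.

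\textbf{Finish and main obstacle.} For every $f\in C(\mathrm{X})_+^{-1}$, $f\circ\sigma=\tilde\phi(f)$ is continuous, so $\sigma$ is continuous into the initial topology on $\mathrm{X}$ induced by $C(\mathrm{X})_+^{-1}$, which coincides with the original compact Hausdorff topology. Rerunning the construction for $\tilde\phi^{-1}$ yields a continuous inverse, so $\sigma:\mathrm{Y}\to\mathrm{X}$ is a homeomorphism. Unraveling the reduction,
\begin{equation*}
\phi(f)(y)=\phi(g)(y)\cdot\tilde\phi(f/g)(y)=\phi(g)(y)\cdot\frac{f(\sigma(y))}{g(\sigma(y))},
\end{equation*}
so setting $h(y):=\phi(g)(y)/g(\sigma(y))\in C(\mathrm{Y})_+^{-1}$ gives the desired form. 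The principal obstacle is the intersection argument for $\sigma(y)$: only the order structure can be transported through $\tilde\phi$, so the proof must delicately fuse Urysohn interpolations on $\mathrm{X}$ with the order-preserving property of $\tilde\phi^{-1}$ to convert topological separations in $\mathrm{X}$ into order contradictions at the single point $y\in\mathrm{Y}$.
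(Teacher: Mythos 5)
The paper offers no proof of this proposition at all: it is quoted verbatim from Sch\"affer \cite{Sch79} (Corollary 8.4), so your attempt has to stand on its own as a proof of Sch\"affer's theorem. Your outer layers are correct and clean. The reduction $\tilde\phi(f)=\phi(gf)/\phi(g)$ does produce an order isomorphism with $\tilde\phi(t\mathbf{1}_{\mathrm{X}})=t\mathbf{1}_{\mathrm{Y}}$, hence one preserving $\|\cdot\|$ and $\min$, and the final unwinding $h(y)=\phi(g)(y)/g(\sigma(y))$ is fine once the normalized case is settled; the continuity and homeomorphism arguments at the end are also standard.

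The gap sits exactly where you locate ``the principal obstacle,'' and it is not a matter of delicacy: the definition you build $\sigma$ from is vacuous as written. You call $F$ admissible at $y$ if some $f\in C(\mathrm{X})_+^{-1}$ satisfies $f\equiv r$ on $F$ and $\tilde\phi(f)(y)=r$. The constant function $f=r\mathbf{1}_{\mathrm{X}}$ satisfies both conditions for \emph{every} nonempty closed $F$, so every nonempty closed set is admissible at every $y$ and the intersection defining $\sigma(y)$ is empty whenever $\mathrm{X}$ has more than one point. Presumably you intend $F$ to be the peak set of $f$ (so $f<r$ off $F$) with $y$ in the peak set of $\tilde\phi(f)$; but then (i) in a general compact Hausdorff space only closed $G_\delta$ sets are peak sets, a point the paper itself is careful about in Section \ref{S:3}, and (ii) the finite intersection property, the uniqueness of the intersection point, and the squeezing step $\min_F f\le\tilde\phi(f)(y)\le\max_F f$ constitute the entire mathematical content of the Kaplansky/Sch\"affer point-realization theorem and each needs an actual argument --- for instance, one must first observe that an order isomorphism of these cones is automatically a lattice isomorphism, so that $\tilde\phi(\min(f_1,f_2))=\min(\tilde\phi(f_1),\tilde\phi(f_2))$, which is the tool that converts disjointness of peak sets in $\mathrm{X}$ into an order contradiction at the single point $y$. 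As it stands, the heart of the proposition is asserted rather than proved; either cite Sch\"affer (as the paper does) or carry out the peak-set construction in the style of the $\PSupp_T$ arguments of Section \ref{S:3}.
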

In particular, if $\phi(t\mathrm{1}_\mathrm{X})=t\mathrm{1}_\mathrm{Y}$ for $t>0$ in above,
where $\mathrm{1}_\mathrm{X}, \mathrm{1}_\mathrm{Y}$ are the constant one functions
on $\mathrm{X}, \mathrm{Y}$, respectively, then $h=\mathrm{1}_\mathrm{Y}$ and $\phi(f)=f\circ\sigma$
for $f\in C(\mathrm{X})_+^{-1}$. Hence
$\phi$ extends to an algebra $*$-isomorphism between commutative unital $C^*$-algebras.
It thus follows from Proposition \ref{prop:Schaffer} the special case of Theorem \ref{T:3all}
for the preservers of the norm of the harmonic mean  in commutative $C^*$-algebras (to see that those preservers fix the constant functions, we refer to the second paragraph in the proof of Lemma \ref{lem:key}).

For the noncommutative case, we have also useful results from
\cite{ML01f} and \cite{ML11a} in relation with the algebra $B(H)$ of all bounded linear operators on a Hilbert space $H$.
It was proved in Theorem 1 in \cite{ML01f} that, if $\dim H\geq 2$, every order isomorphism of $\bhp$ is of the form $A\mapsto CAC^*$ with some invertible bounded linear or conjugate linear operator $C$ on $H$.
In Theorem 1 in \cite{ML11a} the same conclusion was deduced for the order isomorphisms of $\bhp^{-1}$.
Recently, in \cite{Mori}, Mori has substantially extended those results (and also the results of \v Semrl in \cite{Semrl} which describe all order isomorphisms of general operator intervals in $B(H)$) to the setting of von Neumann algebras.

\begin{proposition}[Mori {\cite[Theorem 4.3]{Mori}}]\label{prop:Mori}
Let $\mathcal{M}$ be a von Neumann algebra without commutative direct summand.
Let $\phi:\mathcal{M}_{+}^{-1} \to \mathcal{N}_{+}^{-1}$ be an order isomorphism.
Then
\[\phi(A)=CJ(A)C^*, \quad A\in \mathcal{M}_{+}^{-1},\]
where $J:\mathcal{M}\to \mathcal{N}$ is a Jordan $*$-isomorphism and $C\in \mathcal{N}$ is an
invertible element.
\end{proposition}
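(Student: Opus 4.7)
The plan is to reduce to the normalized case $\phi(I)=I$, extract a projection-lattice isomorphism from $\phi$, and then invoke a Dye-type rigidity theorem to produce the Jordan $*$-isomorphism. Setting $C=\phi(I)^{1/2}\in \mathcal{N}_+^{-1}$ and defining $\tilde\phi:\M_+^{-1}\to \mathcal{N}_+^{-1}$ by $\tilde\phi(A)=C^{-1}\phi(A)C^{-1}$, we obtain an order isomorphism with $\tilde\phi(I)=I$; as in the second paragraph of the proof of Lemma \ref{lem:key}, the equivalence $A\leq tI \iff \|A\|\leq t$ together with order preservation forces $\tilde\phi(tI)=tI$ for every $t>0$. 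It then suffices to show that $\tilde\phi$ coincides with the restriction to $\M_+^{-1}$ of a Jordan $*$-isomorphism $J:\M\to \mathcal{N}$, since then $\phi(A)=CJ(A)C^*$ (recall $C=C^*$).

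The central step is the extraction of a projection-lattice isomorphism from $\tilde\phi$. For a projection $P\in\M$ and scalars $0<s<t$, the two-point-spectrum element $A_{s,t,P}:=sP+t(I-P)$ lies in $\M_+^{-1}$, and the plan is to recognize the family $\{A_{s,t,P}:P\in\M\text{ projection}\}$ purely in terms of the order structure of $[sI,tI]\cap \M_+^{-1}$ --- for instance, as those $A$ for which both the down-set $\{X\in \M_+^{-1}:sI\leq X\leq A\}$ and the up-set $\{X\in \M_+^{-1}:A\leq X\leq tI\}$ decompose in a maximally simple way reflecting the two-point spectrum of $A$. Transferring this characterization through $\tilde\phi$ and using $\tilde\phi(sI)=sI$, $\tilde\phi(tI)=tI$, one obtains $\tilde\phi(A_{s,t,P})=A_{s,t,\Phi(P)}$ for a uniquely determined projection $\Phi(P)\in \mathcal{N}$. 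From the equivalence $P\leq Q \iff A_{s,t,P}\leq A_{s,t,Q}$ and the order-preserving property of $\tilde\phi$, the induced map $\Phi$ is an order isomorphism between the projection lattices of $\M$ and $\mathcal{N}$.

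Since $\M$ has no commutative direct summand, a Dye-type rigidity theorem for projection-lattice isomorphisms of von Neumann algebras then yields a Jordan $*$-isomorphism $J:\M\to \mathcal{N}$ agreeing with $\Phi$ on projections. To promote the equality $\tilde\phi=J$ from two-point-spectrum elements to all of $\M_+^{-1}$, observe that every $A\in \M_+^{-1}$ is a norm limit of finite positive combinations of pairwise orthogonal spectral projections; combining additivity of $J$ on orthogonal projections with the already established $\tilde\phi(tI)=tI$ and a suitable continuity or monotone-approximation argument for $\tilde\phi$, one deduces $\tilde\phi(A)=J(A)$ throughout $\M_+^{-1}$, and hence $\phi(A)=CJ(A)C^*$.

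The principal obstacle is the extraction step: $\M_+^{-1}$ contains no nontrivial projections, so projections must be reconstructed from invertible data alone, and the correct order-theoretic characterization of two-point-spectrum elements --- sensitive enough to single them out, yet robust under an arbitrary order isomorphism --- is the technical heart of Mori's argument. The exclusion of a commutative direct summand is essential here: Sch\"affer's Proposition \ref{prop:Schaffer} already exhibits a rich family of order automorphisms of $C(\mathrm{X})_+^{-1}$ that are not of the form $CJ(\cdot)C^*$, so any projection-recovery procedure must genuinely exploit noncommutative structure.
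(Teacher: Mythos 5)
First, note that the paper does not prove this statement at all: Proposition \ref{prop:Mori} is quoted verbatim from Mori's paper (Theorem 4.3 in \cite{Mori}) and used as a black box, so there is no in-paper argument to compare yours against. Judged on its own, your outline follows the broadly standard strategy for such results (normalize, recover the projection lattice from the order structure, apply Dye-type rigidity, extend by approximation), but it contains one step that is actually wrong and leaves the genuinely hard content unproved.

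The wrong step is the claim that $\tilde\phi(tI)=tI$ follows ``as in the second paragraph of the proof of Lemma \ref{lem:key}.'' That argument runs through the chain $\phi(A)\leq tI \iff \|\phi(A)\|\leq t \iff \|A\|\leq t \iff A\leq tI \iff \phi(A)\leq\phi(tI)$, and the middle equivalence uses the hypothesis $\|\phi(A)\|=\|A\|$. A bare order isomorphism carries no norm information: from $\tilde\phi(I)=I$ you only learn that $\tilde\phi$ maps $\{A: A\leq I\}$ onto $\{B: B\leq I\}$, which does not locate $\tilde\phi(tI)$ for $t\neq 1$. (Every place in the paper where $\phi(tI)=tI$ is derived --- in Lemma \ref{lem:key} and again after Lemma \ref{lem:HM-order} --- the map is assumed norm-preserving; indeed the authors apply Proposition \ref{prop:Mori} precisely in that situation and only then conclude that $C$ is unitary.) So either you must give an order-theoretic characterization of the scalars $tI$ inside $\M_+^{-1}$, or you must restructure the argument so as not to need $\tilde\phi(tI)=tI$ a priori. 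Beyond that, the two pieces you explicitly defer --- the order-theoretic recognition of two-point-spectrum elements $sP+t(I-P)$ (where the hypothesis of no commutative direct summand must enter, as Sch\"affer's examples show it must) and the continuity or monotone-approximation property of $\tilde\phi$ needed to pass from projections to general elements --- are not routine; they constitute essentially all of the work in Mori's proof, so as written your text is a plan rather than a proof.
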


Since for our map $\phi$ satisfying \eqref{E:5} we have $\phi(tI)=tI$, $t>0$ (again, see the second paragraph in the proof of Lemma \ref{lem:key}), it follows  that $C$ here is necessarily
a unitary. Therefore, we obtain another special case of Theorem \ref{T:3all} for preservers of the norm of the harmonic mean
in the setting of von Neumann algebras without commutative direct summand.
However, with Proposition \ref{prop:Schaffer}
we also have a positive result for the commutative case.
Theorem \ref{thm:gen-hm} below covers the case of all $AW^*$-algebras and hence that of all general von Neumann algebras.

For the formulation and the proof of Theorem \ref{thm:gen-hm} we recall the following.
A $C^*$-algebra $\mathcal{M}$ is said to be an \textit{$AW^*$-algebra}
if  any set of orthogonal projections in $\mathcal{M}$ has a least upper bound, and
that each maximal commutative  $C^*$-subalgebra of $\mathcal{M}$ is generated by its
projections.  It is equivalent to that
for every nonempty set $S\subset\mathcal{M}$ there is a projection
$P\in  \mathcal{M}$ such that
$$
S^\bot:=\{X\in \mathcal{M}: XY=0\ \text{holds for all}\ Y\in S\} = \mathcal{M}P
$$
(see, e.g., Exercise 1C on page 43 in \cite{BerBook}).

Clearly, a von Neumann algebra is an $AW^*$-algebra.  For any positive element $X$ in an $AW^*$-algebra $\mathcal{M}$ of Hilbert space operators, the range projection
$E_X(0,\infty)$ of $X$ belongs to $\mathcal{M}$ (see \cite[Theorem 7]{Frank}).
Moreover, for projections $P,Q$ in $\M$ the infimum $P\wedge Q$ exists in $\M$, which is the projection $R$
such that $\{I-P,I-Q\}^{\bot} = \mathcal M R$.
On the other hand,
a commutative $AW^*$-algebra is algebra $*$-isomorphic to $C(\Omega)$ for some Stonean space $\Omega$, i.e., a compact Hausdorff space
in which the closure $\overline{O}$ of
any open subset $O$ is open (see Theorem 1 on page 40 in \cite{BerBook}).

Let $\mathcal{A}$ be a $C^*$-subalgebra of an $AW^*$-algebra $\mathcal{M}$.
Call $\mathcal{A}$ an \textit{$AW^*$-subalgebra} of $\mathcal{M}$ if $\mathcal{A}$ itself is an $AW^*$-algebra, and
the supremum of any family of orthogonal projections in $\mathcal{A}$, computed in $\mathcal{M}$,
belongs to $\mathcal{A}$ (see Proposition 8(i) on page 23 in \cite{BerBook}).
Every maximal commutative $C^*$-subalgebra of $\mathcal{M}$ is an $AW^*$-subalgebra of $\mathcal{M}$
(cf. Proposition 8(iv) on page 23 in \cite{BerBook}).
The intersection of a family of $AW^*$-subalgebras of $\mathcal{M}$ is again an $AW^*$-subalgebra of $\mathcal{M}$
(see Proposition 8(ii) on page 23 in \cite{BerBook}).
For any nonempty subset $S$ of $\mathcal{M}$, the $AW^*$-subalgebra $AW^*(S)$ generated by $S$ is the intersection of
all $AW^*$-subalgebras of $\mathcal{M}$ containing $S$.
In the case when $S$ is a selfadjoint and commuting family of elements in $\mathcal{M}$,
the $AW^*$-subalgebra $AW^*(S)$ is  commutative,
as $S$ is contained in a maximal commutative $C^*$-subalgebra of $\mathcal{M}$.

The $AW^*$-subalgebra  $AW^*(I,X)$  of $\M$ generated by the
identity $I$ and a positive element $X$  is a commutative $AW^*$-subalgebra of $\mathcal{M}$.
Consequently, we can think of $AW^*(I,X)$ as $C({\Omega})$ for a Stonean space $C({\Omega})$ with
$f$ in $C({\Omega})$ corresponding to $X$ and the constant 1 function corresponding to  $I$.
We  associate to $X$ a `spectral family' $\{\E_X(t): t\geq0\}$ of projections contained in $\M$.
Namely, these projections are computed in  $C({\Omega})$  such that
$\E_X(t)$ is the indicator function of the clopen set $\overline{f^{-1}(t, \infty)}$.
It is easy to see that $\E_X(t)X=X\E_X(t)$ for all $t\geq 0$. Moreover, if $\mathcal M$ is an $AW^*$-algebra of Hilbert space operators, then it can be shown that
$$
E_X(t,\infty) \leq \E_X(t) \leq E_X[t,\infty), \quad t\geq 0.
$$
Here, $E_X(t,\infty)$ and $E_X[t,\infty)$ are the spectral projections
of $X$ which might not
belong to $\mathcal{M}$ while $\E_X(t)\in \mathcal{M}$.
Moreover, we have
$$
t\E_X(t)\leq X\E_X(t)\leq X \quad  \text{and}\quad X(I-\E_X(t)) \leq t(I-\E_X(t)), \quad t\geq 0.
$$

In the next lemma we present a characterization of the orthogonality of positive elements.

\begin{lemma}\label{lem:AW-ortho}
For any positive elements $X,Y$ in an $AW^*$-algebra $\mathcal{M}$, we have
$XY=0$ if and only if  $\E_X(t)\E_Y(t)=0$ holds for all  $t >0$.
\end{lemma}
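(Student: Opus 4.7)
My plan is to work only with the two order estimates stated just above the lemma, $t\E_X(t)\le X$ and $X(I-\E_X(t))\le t(I-\E_X(t))$ (and the analogues for $Y$), together with the elementary principle that a positive element whose square is zero is itself zero. This avoids any use of range projections or of a Hilbert space representation of $\M$.

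For ($\Rightarrow$), assume $XY=0$; then $YX=0$ and $YXY=0$, so
\[
(X^{1/2}YX^{1/2})^{2}=X^{1/2}(YXY)X^{1/2}=0,
\]
giving $X^{1/2}YX^{1/2}=0$ by positivity. Fix $t>0$ and write $P=\E_X(t)$, $Q=\E_Y(t)$. The inequality $tQ\le Y$ gives $Y-tQ\ge 0$; conjugating by $X^{1/2}$ yields
\[
0\le tX^{1/2}QX^{1/2}\le X^{1/2}YX^{1/2}=0,
\]
whence $X^{1/2}QX^{1/2}=(QX^{1/2})^{*}(QX^{1/2})=0$, so $QX^{1/2}=0$ and, taking adjoints, $X^{1/2}Q=0$. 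Symmetrically, $tP\le X$ gives $X-tP\ge 0$; conjugating by $Q$ and using $QXQ=(QX^{1/2})(X^{1/2}Q)=0$ yields $tQPQ\le 0$, and since $QPQ\ge 0$ we conclude $QPQ=0$. But $QPQ=(PQ)^{*}(PQ)$ (because $P^{2}=P$), so $PQ=0$, as required.

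For ($\Leftarrow$), assume $\E_X(t)\E_Y(t)=0$ for every $t>0$. Fix $t>0$, set $P=\E_X(t)$, $Q=\E_Y(t)$, and decompose
\[
X=XP+X(I-P),\qquad Y=QY+(I-Q)Y.
\]
Expanding $XY$ gives four summands. The cross term $(XP)(QY)=X\cdot(PQ)\cdot Y=0$ by hypothesis. In each of the remaining three summands at least one of the factors $X(I-P)$, $(I-Q)Y$ appears, each of norm at most $t$ by the second order estimate, while the complementary factor is bounded by $\|X\|$, $\|Y\|$, or $t$. Summing yields
\[
\|XY\|\le t\|X\|+t\|Y\|+t^{2},
\]
and letting $t\to 0^{+}$ gives $XY=0$.

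The only real difficulty is in ($\Rightarrow$): the abstract projections $\E_X(t)$ and $\E_Y(t)$ need not belong to the $C^{*}$-subalgebra generated by $X$, $Y$ and $I$, so continuous functional calculus cannot be applied to them directly. The two successive conjugations---first by $X^{1/2}$, then by $Q$---are what transfer the algebraic relation $XY=0$ into the orthogonality $PQ=0$ using nothing beyond the supplied order inequalities and positivity.
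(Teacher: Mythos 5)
Your proof is correct. The ($\Leftarrow$) direction is essentially the paper's own argument: the authors use the three-term decomposition $XY=(X-X\E_X(\epsilon))Y+X\E_X(\epsilon)(Y-\E_Y(\epsilon)Y)+X\E_X(\epsilon)\E_Y(\epsilon)Y$ and the same norm estimate $\|XY\|\le\epsilon(\|X\|+\|Y\|)$, so your four-term expansion is only cosmetically different. The ($\Rightarrow$) direction, however, is genuinely different. The paper passes to the commutative $AW^*$-subalgebra generated by $I$, $X$ and $Y$, realizes it as $C(\Omega)$ for a Stonean space $\Omega$, and observes that the supports of the functions representing $X$ and $Y$ are disjoint clopen sets containing the supports of the indicator functions $\E_X(t)$, $\E_Y(t)$. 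That route is short but implicitly requires one to know that the ``spectral family'' $\E_X(t)$, defined via $AW^*(I,X)$, is compatible with the larger commutative subalgebra $AW^*(I,X,Y)$ (which holds because suprema of projections in $AW^*$-subalgebras are computed in $\M$, but it is a point that needs to be kept in mind). Your argument bypasses the Gelfand/Stonean representation entirely: from $XY=0$ you get $X^{1/2}YX^{1/2}=0$, then the order estimate $t\E_Y(t)\le Y$ and two conjugations (by $X^{1/2}$, then by $\E_Y(t)$) combined with the standard facts $a\ge 0,\ a^2=0\Rightarrow a=0$ and $b^*b=0\Rightarrow b=0$ yield $\E_X(t)\E_Y(t)=0$. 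This uses only the two displayed inequalities stated before the lemma and elementary $C^*$-positivity, so it is more self-contained and would work verbatim for any family of projections satisfying $tP_t\le X$; the price is that it is slightly longer than the one-line appeal to disjoint supports. Both proofs are valid.
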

\begin{proof}
To see the `only if' part, we can consider the commutative $AW^*$-algebra $C(\mathrm{X})$ generated by $I, X$ and $Y$, in which
the supports of the continuous functions corresponding to $X$ and $Y$ are disjoint clopen subsets of the Stonean space $\mathrm{X}$.

For the `if' part,
observe that
$$XY=(X-X \E_X(\epsilon))Y+ X \E_X(\epsilon)(Y-\E_Y(\epsilon)Y)+X \E_X(\epsilon)\E_Y(\epsilon)Y.
$$
From this we derive
$$
\|XY\| \leq \|X-X \E_X(\epsilon)\|\,\|Y\| + \|X\|\,\|Y- \E_Y(\epsilon)Y\|  \leq \epsilon(\|X\|+\|Y\|).
$$
As $\epsilon >0$ can be arbitrary small, we see that $XY=0$.
\end{proof}

Let $X,Y$ be positive elements in a $C^*$-algebra $\A$. By $X\wedge Y=0$ we mean that there is no nonzero positive $Z$ in $\A$ such
that $Z\leq X$ and $Z\leq Y$.  It is apparent that in this case we have $(tX)\wedge Y=0$ for any positive $t>0$.
For a nonempty subset $F\subset \Ap$, let
$$
F^\wedge = \{Y\in \Ap: X\wedge Y = 0\ \text{for all}\ X\in F\},
$$
and $F^{\wedge\wedge}= (F^\wedge)^\wedge$.
Note that
$X\in F^\wedge$ implies $tX\in F^\wedge$ for all $t>0$.
Note also that if $\psi: \Ap\to \Bp$ is an order isomorphism between positive cones of $C^*$-algebras
then $\psi(F^\wedge)=\psi(F)^\wedge$ holds for any subset $F\subset \Ap$.

\begin{lemma}\label{lem:HM-ww}
Let $\mathcal{M}$ be an $AW^*$-algebra, and let $X\in \mathcal{M}_+$.  Then
\begin{eqnarray*}
&& \{Y\in \mathcal{M}_+ : Y\leq \|Y\|\E_X(t)\ \text{for some}\ t>0\}\\
 &\subset& \{X\}^{\wedge\wedge}
  \subset \{Y\in \mathcal{M}_+ : \E_Y(t)\leq \E_X(0)\ \text{for all} \ t>0\} .
  \end{eqnarray*}
\end{lemma}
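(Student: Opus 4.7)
The plan is to prove the two inclusions separately, using the bounds $t\E_X(t)\leq X$ and $X\leq \|X\|\E_X(0)$ (the latter because $X(I-\E_X(0))=0$, as recorded just before the lemma) together with the existence of projection-lattice infima in the AW*-algebra $\M$.

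\textbf{First inclusion.} Suppose $Y\leq \|Y\|\E_X(t)$ for some $t>0$ (the case $Y=0$ being trivial), and fix any $W\in\{X\}^\wedge$. If $0\leq Z\leq Y$ and $Z\leq W$, then $Z\leq \|Y\|\E_X(t)\leq (\|Y\|/t)X$. Setting $s:=\min(t/\|Y\|,\,1)$ yields $sZ\leq X$ and $sZ\leq W$, so $X\wedge W=0$ forces $sZ=0$, hence $Z=0$. Thus $Y\wedge W=0$ and $Y\in\{X\}^{\wedge\wedge}$.

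\textbf{Second inclusion.} The plan is to argue by contrapositive: assuming $\E_Y(t_0)\not\leq \E_X(0)$ for some $t_0>0$, the goal is to exhibit a witness $W\in\{X\}^\wedge$ with $Y\wedge W\neq 0$, so that $Y\notin\{X\}^{\wedge\wedge}$. With $Q:=I-\E_X(0)$, the natural candidate is
\[ W:=\E_Y(t_0)\,Q\,\E_Y(t_0),\]
which is nonzero precisely because $\E_Y(t_0)\not\leq \E_X(0)$ translates to $\E_Y(t_0)Q\neq 0$. Since $W\leq \E_Y(t_0)\leq Y/t_0$, the element $\min(t_0,1)\,W$ is a nonzero common lower bound of $Y$ and $W$, so $Y\wedge W\neq 0$ is immediate.

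\textbf{Main obstacle.} What requires care is verifying $W\in\{X\}^\wedge$, i.e., $W\wedge X=0$; the subtlety is that $\E_Y(t_0)$ and $\E_X(0)$ need not commute. The plan is to form the projection-lattice infimum $P_0:=\E_Y(t_0)\wedge \E_X(0)$, which exists in $\M$ by the AW*-axioms. Any positive $Z$ with $Z\leq W\leq \E_Y(t_0)$ and $Z\leq X\leq \|X\|\E_X(0)$ has range projection dominated by both $\E_Y(t_0)$ and $\E_X(0)$, hence by $P_0$; therefore $Z=P_0 Z P_0\leq P_0 W P_0$. Using $P_0\E_Y(t_0)=P_0=\E_Y(t_0)P_0$ and $\E_X(0)P_0=P_0=P_0\E_X(0)$, the $Q$-factor in the middle telescopes:
\[ P_0 W P_0 \;=\; P_0 Q P_0 \;=\; P_0(I-\E_X(0))P_0 \;=\; P_0-P_0 \;=\; 0,\]
which forces $Z=0$. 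This establishes $W\wedge X=0$ and completes the contrapositive.
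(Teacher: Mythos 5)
Your proof is correct. For the first inclusion your argument is essentially the paper's: both hinge on $t\E_X(t)\leq X$ and rescale a putative common lower bound so that it lies under both $X$ and the given element of $\{X\}^\wedge$. For the second inclusion you take a genuinely different route. The paper argues directly: it first observes that each $\E_Y(t)$ again lies in $\{X\}^{\wedge\wedge}$, sets $Q_t=\E_Y(t)\wedge\E_X(0)$, and proves $(\E_Y(t)-Q_t)\wedge X=0$ by an operator-monotonicity limit ($s^{1/n}P\leq X^{1/n}$, letting $n\to\infty$ to pass from $X$ to a projection under $\E_X(0)$); since $\E_Y(t)-Q_t$ then belongs to $\{X\}^\wedge$ while sitting below $\E_Y(t)\in\{X\}^{\wedge\wedge}$, it must vanish. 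You instead argue by contraposition with the explicit (non-projection) witness $W=\E_Y(t_0)(I-\E_X(0))\E_Y(t_0)$ and annihilate any common lower bound $Z$ of $W$ and $X$ by compressing through $P_0=\E_Y(t_0)\wedge\E_X(0)$, where $P_0WP_0=0$ is a two-line computation. This avoids the $n$-th root limiting argument entirely; the only external input is that the range projection of $Z$ lies in $\M$ (Frank's theorem, which the paper cites and itself relies on), and even that can be bypassed: $Z\leq W\leq\E_Y(t_0)$ and $Z\leq X\leq\|X\|\E_X(0)$ give $Z^{1/2}(I-\E_Y(t_0))=Z^{1/2}(I-\E_X(0))=0$, so $Z^{1/2}\in\{I-\E_Y(t_0),I-\E_X(0)\}^{\bot}=\M P_0$ and hence $Z=P_0ZP_0$ directly from the $AW^*$-definition of the infimum of two projections. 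Both arguments are sound; yours is somewhat more economical at the key step.
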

\begin{proof}
  Let $Y\in \mathcal{M}_+$ such that $Y\leq \|Y\|\E_X(t)$ \text{for some} $t>0$. Let $U\in \mathcal{M}_+$ such that $U\wedge X=0$ but
  $W\leq U$ and $W\leq Y$ for some nonzero $W\in \M_+$.
  Then there is a nonzero projection $P= \E_W(s)\in \mathcal{M}_+$ for some  $s>0$
  such that $sP\leq W\leq U$ and $sP\leq W\leq Y\leq \|Y\|\E_X(t)\leq (1/t)\|Y\|X$.
  Consequently, $sP$ is less than both $U$ and  $(1/t)\|Y\|X$.  While $ U\wedge (t\|Y\|X)=0$,
  we have $P=0$.  This contradiction establishes the first inclusion.

  On the other hand, let $Y\in \{X\}^{\wedge\wedge}$, and so does every  $\E_Y(t)$ for $t>0$.
  Let $Q_t=\E_Y(t)\wedge \E_X(0)$. Then
  $(\E_Y(t)-Q_t)\wedge \E_X(0)=0$.
  We claim that $(\E_Y(t)-Q_t)\wedge X=0$.
  For else, as in the first part, we will have a nonzero projection $P$ and a scalar $s>0$ such that
  $sP\leq \E_Y(t) -Q_t$ and $sP\leq X$.
  Since the map $a\mapsto a^{1/n}$ is operator monotone in $\M_+$, we have $s^{1/n}P\leq \E_Y(t) -Q_t$ and
  $s^{1/n}P\leq X^{1/n}$ for $n=1,2,\ldots$.  As $n\to \infty$, we have
  $P\leq \E_Y(t) -Q_t$ and $P\leq \E_X(0)$.  Hence $P\leq (\E_Y(t) -Q_t)\wedge \E_X(0)=0$.
  This contradiction verifies our claim.  Consequently,
   $\E_Y(t)-Q_t\in \{X\}^\wedge$.  This forces
  $\E_Y(t)\wedge (\E_Y(t)-Q_t)=0$.  In other words, $\E_Y(t) = Q_t\leq \E_X(0)$ for all $t>0$.
\end{proof}

The following lemma gives a characterization of the
orthogonality of positive elements in terms of the norm of harmonic means.

\begin{lemma}\label{lem:hm-ortho}
Let $\mathcal{M}$ be an $AW^*$-algebra and $t >0$ be a given number.
For $X,Y\in \mathcal{M}_{+}$, we have
$XY = 0$ if and only if
$$
\|(tI + U)!(tI + V)\|\leq \dfrac{2t(t+\max\{\|U\|,\|V\|\})}{2t+\max\{\|U\|,\|V\|\}}
$$
holds for any $U\in \{X\}^{\wedge\wedge}$ and $V\in \{Y\}^{\wedge\wedge}$.
\end{lemma}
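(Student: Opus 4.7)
The plan is to handle the two directions separately. For the necessity, assume $XY=0$. The first step is to show that this forces $UV=0$ for every $U\in\{X\}^{\wedge\wedge}$ and $V\in\{Y\}^{\wedge\wedge}$. Since $XY=0$ implies $X,Y$ commute, they lie in a common commutative $AW^*$-subalgebra of $\mathcal{M}$, realized as $C(\Omega)$ for some Stonean space $\Omega$. There, the supports $\overline{f^{-1}(0,\infty)}$ and $\overline{g^{-1}(0,\infty)}$ corresponding to $X$ and $Y$ are clopen sets whose intersection is empty (once one uses that the first is clopen and its complement must contain $\overline{g^{-1}(0,\infty)}$), hence $\E_X(0)\E_Y(0)=0$. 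By Lemma \ref{lem:HM-ww}, $\E_U(r)\leq\E_X(0)$ and $\E_V(r)\leq\E_Y(0)$ for all $r>0$. Using that, for projections, $P\leq Q$ forces $PQ=QP=P$, I would derive $\E_U(r)\E_V(r)=\E_U(r)\E_X(0)\E_Y(0)\E_V(r)=0$ for every $r>0$, so by Lemma \ref{lem:AW-ortho} we conclude $UV=0$.

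Then $U,V$ commute and generate a commutative $AW^*$-subalgebra, in which the corresponding functions $u,v$ have disjoint supports. A pointwise computation shows $(tI+U)!(tI+V)=g(U+V)$ where $g(s)=\frac{2t(t+s)}{2t+s}$ is monotone increasing in $s\geq0$. Because $UV=0$ yields $\|U+V\|=\max(\|U\|,\|V\|)=m$, we obtain $\|(tI+U)!(tI+V)\|=g(m)=\frac{2t(t+m)}{2t+m}$, so the inequality holds (with equality, in fact).

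For the converse I would argue by contrapositive. Assume $XY\neq0$; by Lemma \ref{lem:AW-ortho} there exists $s>0$ with $\E_X(s)\E_Y(s)\neq0$. Set $P=\E_X(s)$ and $Q=\E_Y(s)$. These are nonzero projections of norm one with $PQ\neq0$, and by Lemma \ref{lem:HM-ww} they lie in $\{X\}^{\wedge\wedge}$ and $\{Y\}^{\wedge\wedge}$ respectively. Using the identity $(tI+P)^{-1}=\frac{1}{t}I-\frac{1}{t(t+1)}P$ (verified directly), one obtains
$$(tI+P)^{-1}+(tI+Q)^{-1}=\frac{1}{t(t+1)}\bigl[2(t+1)I-(P+Q)\bigr],$$
hence
$$\|(tI+P)!(tI+Q)\|=\frac{2t(t+1)}{2(t+1)-\|P+Q\|}.$$
If $\|P+Q\|\leq1$, then $P+Q\leq I$, so $Q\leq I-P$, which forces $PQ=0$, contradicting the choice of $s$. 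Therefore $\|P+Q\|>1$, giving $\|(tI+P)!(tI+Q)\|>\frac{2t(t+1)}{2t+1}$ and violating the assumed bound (with $m=1$).

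The main obstacle is the first step: propagating $XY=0$ to $UV=0$ for arbitrary $U,V$ in the respective double annihilators. The subtlety is that the lattice relation $\wedge$ is strictly weaker than zero-product (two non-commuting projections can have trivial meet yet nonzero product), so one cannot simply chase the $\{\cdot\}^{\wedge\wedge}$ structure. The key is Lemma \ref{lem:HM-ww}, which upgrades membership in the double annihilator to a projection-level domination by the support projections of $X$ and $Y$; these support projections genuinely commute and are orthogonal, which then transfers to $\E_U(r)\E_V(r)=0$ and hence to $UV=0$ via Lemma \ref{lem:AW-ortho}.
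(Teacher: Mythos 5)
Your proof is correct and follows essentially the same route as the paper's: both directions rest on Lemmas \ref{lem:AW-ortho} and \ref{lem:HM-ww} in exactly the way you use them, with the forward direction reducing to the commutative computation $\|(tI+U)!(tI+V)\|=2t-\tfrac{2t^2}{2t+\|U+V\|}$ after establishing $UV=0$, and the converse hinging on the same identity $(tI+P)^{-1}=\tfrac1t I-\tfrac{1}{t(1+t)}P$ for the projections $P=\E_X(s)$, $Q=\E_Y(s)$. The only cosmetic differences are that you spell out the deduction of $UV=0$ (which the paper compresses into one sentence) and phrase the converse contrapositively rather than deriving $PQ=0$ directly from the assumed inequality.
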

\begin{proof}
For the `only if' part, suppose $X,Y\in\mathcal{M}_+$ are such that  $XY=0$.
If  $U\in \{X\}^{\wedge\wedge}$ and $V\in \{Y\}^{\wedge\wedge}$, then
by Lemmas \ref{lem:AW-ortho} and \ref{lem:HM-ww} we see that $UV=0$.
Using functional calculus, we can assume that $U,V, I$ are
nonnegative continuous functions on some compact Hausdorff space
with $I$ being the constant 1 function.   Then,
$\|U+V\|= \max\{\|U\|,\|V\|\}$.
Moreover,
\begin{align*}
\|(tI  + U)!(tI  + V)\| &= \left\|\dfrac{2(tI+U)(tI+V)}{2tI + U + V}\right\| \\
&=
\left\|\dfrac{2t(tI+U+V)}{2tI +U + V}\right\|=\left\| 2t - \frac{2t^2}{2tI +U +V}\right\| \\
& = 2t - \frac{2t^2}{2t +\max\{\|U\|,\|V\|\}}= \dfrac{2t(t+\max\{\|U\|,\|V\|\})}{2t+\max\{\|U\|,\|V\|\}}.
\end{align*}

For the `if' part suppose the inequality in the statement holds for nonzero $X,Y$ in $\mathcal{M}_{+}$.
Suppose $s>0$ is such that both the  projections $P=\E_X(s)$ and $Q=\E_Y(s)$   are nonzero.
By Lemma \ref{lem:HM-ww}, $P\in \{X\}^{\wedge\wedge}$ and $Q\in \{Y\}^{\wedge\wedge}$.
By assumption, we have
$$
\|(tI+P)!(tI+Q)\|\leq \dfrac{2t(t+\max\{\|P\|,\|Q\|\})}{2t+\max\{\|P\|,\|Q\|\}} = \dfrac{2t(t+1)}{2t + 1}.
$$
Since
$$
(tI+P)^{-1}= \dfrac{I}{t} - \dfrac{P}{t(1+t)},
$$
we have
\begin{align*}
  (tI+P)!(tI+Q) &= 2\big( (tI+P)^{-1} + (tI+Q)^{-1}\big)^{-1}\\
   & = 2\left(\dfrac{I}{t} -\dfrac{P}{t(1+t)} + \dfrac{I}{t} -\dfrac{Q}{t(1+t)}\right)^{-1}
   = \left(\dfrac{I}{t} -\dfrac{P+Q}{2t(1+t)}\right)^{-1}\\
   &= 2t(1+t)\left({2(1+t)I -(P+Q)}\right)^{-1}.
\end{align*}
The assumption implies that
$$
2t(1+t)\left({2(1+t)I -(P+Q)}\right)^{-1}\leq \dfrac{2t(t+1)I}{2t+1},
$$
or equivalently,
$$
({2t+1})I \leq {2(1+t)I -(P+Q)}.
$$
This forces
$P+Q \leq I$, or $PQ=0$. If one of $P,Q$ is zero, then we obviously have $PQ=0$. Therefore, $\E_X(s)\E_Y(s)=0$ holds for all $s>0$ and hence we obtain $XY=0$ by Lemma \ref{lem:AW-ortho}.
\end{proof}

The following is  the  $AW^*$-algebra part in Theorem \ref{T:3all} for preservers of the norm of the harmonic mean.

\begin{theorem}\label{thm:gen-hm}
Let $\mathcal{M}, \mathcal{N}$ be $AW^*$-algebras.
Assume that $\phi: \mathcal{M}_{+}^{-1} \to \mathcal{N}_{+}^{-1}$ is a surjective map. Then $\phi$ satisfies
\[
\|\phi(A)!\phi(B)\| = \|A!B\|, \quad A,B\in \mathcal{M}_{+}^{-1},
\]
if and only if it extends to a Jordan $*$-isomorphism from $\mathcal{M}$ onto $\mathcal{N}$.
\end{theorem}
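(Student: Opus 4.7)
Necessity is the content; sufficiency is immediate, since Jordan $*$-isomorphisms are isometries commuting with the harmonic mean on positive definite elements. Setting $A=B$ in the hypothesis gives $\|\phi(A)\|=\|A\|$; Lemma \ref{lem:HM-order} together with the surjectivity of $\phi$ makes $\phi$ an order isomorphism of $\mathcal{M}_{+}^{-1}$ onto $\mathcal{N}_{+}^{-1}$; and the short argument used in the opening paragraph of the proof of Lemma \ref{lem:key} forces $\phi(tI)=tI$ for every $t>0$.

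The plan is to reduce the theorem to Lemma \ref{lem:key}, so I must first extend $\phi$ to the entire positive cone and then verify that this extension preserves the norm, the order, and the orthogonality. Using that every $AW^{*}$-algebra is monotone complete, put
\[
\tilde\phi(A)=\inf_{\varepsilon>0}\phi(A+\varepsilon I),\qquad A\in \mathcal{M}_{+},
\]
the infimum being taken in $\mathcal{N}_{+}$. Applying the same construction to $\phi^{-1}$ produces a two-sided inverse, and hence $\tilde\phi:\mathcal{M}_{+}\to\mathcal{N}_{+}$ is a bijective order isomorphism which extends $\phi$ and still satisfies $\tilde\phi(tI)=tI$ for every $t\geq 0$. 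Because $\|B\|=\inf\{t\geq 0:B\leq tI\}$ in any unital $C^{*}$-algebra, $\tilde\phi$ inherits norm preservation from this identity combined with $\tilde\phi(tI)=tI$.

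The decisive step is orthogonality preservation, and I extract it from Lemma \ref{lem:hm-ortho}. Since $\tilde\phi$ is an order isomorphism sending $0$ to $0$, one easily checks $\tilde\phi(F^{\wedge})=\tilde\phi(F)^{\wedge}$ for every $F\subset\mathcal{M}_{+}$; in particular, for $X,Y\in\mathcal{M}_{+}$ the sets $\{\tilde\phi(X)\}^{\wedge\wedge}$ and $\{\tilde\phi(Y)\}^{\wedge\wedge}$ consist precisely of the $\tilde\phi(U)$ and $\tilde\phi(V)$ with $U\in\{X\}^{\wedge\wedge}$ and $V\in\{Y\}^{\wedge\wedge}$, and of course $\|\tilde\phi(U)\|=\|U\|$, $\|\tilde\phi(V)\|=\|V\|$. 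Matching the inequality of Lemma \ref{lem:hm-ortho} for the pairs $(X,Y)$ and $(\tilde\phi(X),\tilde\phi(Y))$ then hinges on the translation identity
\[
\tilde\phi(sI+W)=sI+\tilde\phi(W),\qquad W\in\mathcal{M}_{+},\; s>0,\quad (\ast)
\]
since, together with the fact that $\tilde\phi$ coincides with $\phi$ on the invertible elements $sI+W$, it yields
\[
\|(tI+\tilde\phi(U))\,!\,(tI+\tilde\phi(V))\|=\|\phi(tI+U)\,!\,\phi(tI+V)\|=\|(tI+U)\,!\,(tI+V)\|.
\]
Granted $(\ast)$, the \emph{only if} direction of Lemma \ref{lem:hm-ortho} applied to $(X,Y)$ transfers verbatim to $(\tilde\phi(X),\tilde\phi(Y))$, and the \emph{if} direction delivers $\tilde\phi(X)\tilde\phi(Y)=0$; symmetry (applied to $\tilde\phi^{-1}$) provides the converse implication, and Lemma \ref{lem:key} closes the proof by producing a Jordan $*$-isomorphism $J:\mathcal{M}\to\mathcal{N}$ extending $\tilde\phi$, hence $\phi$.

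The main obstacle is the translation identity $(\ast)$. My proposed route is a localisation argument: for each $A\in\mathcal{M}_{+}^{-1}$ the subalgebra $AW^{*}(I,A)$ is commutative and $*$-isomorphic to some $C(\Omega_{A})$ with $\Omega_{A}$ Stonean, and the goal is to show that $\phi$ maps $AW^{*}(I,A)_{+}^{-1}$ into the positive invertible cone of a commutative $AW^{*}$-subalgebra of $\mathcal{N}$. Once this is granted, Proposition \ref{prop:Schaffer} applied in the Stonean setting, together with $\phi(tI)=tI$, forces $\phi$ restricted there to be a pullback by a homeomorphism, hence affine, so $\phi(A+sI)=\phi(A)+sI$; passing to infima yields $(\ast)$ throughout $\mathcal{M}_{+}$. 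The delicate technical heart is thus to verify that $\phi$ does send commutative invertible cones to commutative ones, and this is precisely where the fine information carried by the norm-of-harmonic-mean hypothesis, mediated by Lemmas \ref{lem:AW-ortho}, \ref{lem:HM-ww} and \ref{lem:hm-ortho} on orthogonality of spectral projections, must actually be invested.
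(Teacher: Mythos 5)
Your reduction to Lemma \ref{lem:key} is the right target, and your observation that the whole matter comes down to transporting the orthogonality criterion of Lemma \ref{lem:hm-ortho} is on point. But the proof as written has two genuine gaps. First, the extension $\tilde\phi(A)=\inf_{\varepsilon>0}\phi(A+\varepsilon I)$ rests on the claim that every $AW^*$-algebra is monotone complete; this is not a known fact (it is a long-standing open problem whether $AW^*$-algebras are monotone complete), so the infimum defining $\tilde\phi$ need not exist in $\mathcal{N}_+$. Second, and more seriously, the entire argument hinges on the translation identity $(\ast)$, which you explicitly do not prove. The proposed route --- showing that $\phi$ carries each commutative cone $AW^*(I,A)_{+}^{-1}$ onto the full positive invertible cone of a commutative $AW^*$-subalgebra of $\mathcal{N}$, and then invoking Proposition \ref{prop:Schaffer} --- is itself a substantial unproved claim: there is no reason offered why the image should be commutative, nor why it should exhaust the positive invertible cone of any subalgebra (Sch\"affer's theorem needs an order isomorphism \emph{onto} such a cone). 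So the decisive step is missing, not merely deferred.

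The way out is to notice that neither the global extension nor $(\ast)$ is needed. Fix $t>0$ and set $\psi(X)=\phi(X+tI)-tI$; Lemma \ref{lem:App2Ap} shows directly (with no completeness assumption) that $\psi$ is a norm-preserving order isomorphism of $\mathcal{M}_+$ onto $\mathcal{N}_+$. The criterion of Lemma \ref{lem:hm-ortho} is phrased precisely in terms of the quantities $\|(tI+U)\,!\,(tI+V)\|$ with $U\in\{X\}^{\wedge\wedge}$, $V\in\{Y\}^{\wedge\wedge}$; since $\psi(\{X\}^{\wedge\wedge})=\{\psi(X)\}^{\wedge\wedge}$ and $tI+\psi(U)=\phi(tI+U)$ \emph{by the very definition of} $\psi$, the hypothesis $\|\phi(tI+U)\,!\,\phi(tI+V)\|=\|(tI+U)\,!\,(tI+V)\|$ transports the criterion verbatim, and $\psi$ (and symmetrically $\psi^{-1}$) preserves orthogonality with no translation identity in sight. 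Lemma \ref{lem:key} then yields a Jordan $*$-isomorphism $J_t$ with $\phi(X+tI)=J_t(X)+tI$, and a short computation shows $J_t$ does not depend on $t$, which gives $\phi(A)=J(A)$ on all of $\mathcal{M}_{+}^{-1}$. In short: the shift by $tI$ should be built into the map you feed to Lemma \ref{lem:key}, rather than proved as a property of an extension of $\phi$.
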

\begin{proof}
It suffices to check the necessity part.
Clearly, $\phi$ preserves the norm.
By Lemma \ref{lem:HM-order}, we see that $\phi$ is an order isomorphism.
By Lemma \ref{lem:App2Ap}, with any fixed $t>0$, the map
$\psi:\mathcal{M}_{+} \to \mathcal{N}_{+}$ defined by
 $\psi(X) = \phi(X+t I) -t I$, $X\in \mathcal{M}_{+}$ is an order isomorphism  preserving the norm.
It follows from Lemma \ref{lem:hm-ortho} that
both $\psi$ and $\psi^{-1}$  preserve the
orthogonality.
Consequently,
Lemma \ref{lem:key} provides a Jordan $*$-isomorphism $J: \mathcal{M}\to \mathcal{N}$ such that
$$
\phi(X+t I) = \psi(X) + tI = J(X) + tI, \quad X\in \mathcal{M}_+.
$$
Observe that we can choose an arbitrary $t>0$ to define $\psi$ and the associated Jordan $*$-isomorphism must be the same.
Indeed, if $t'>0$ is such that $t'> t$ and $t'$ is associated with the Jordan $*$-isomorphism $J'$, then
\begin{eqnarray*}
J'(X)  &=& \phi(X + t'I)- t'I = \phi((X+ (t'-t)I)+tI) - t'I \\
&=& J(X+(t'-t)I) + tI - t'I  = J(X),\quad X\in \mathcal{M}_+.
\end{eqnarray*}
For any $A\in \mathcal{M}_+^{-1}$ we can find $\epsilon >0$ such that $X=A-\epsilon I\geq0$.  It follows
$$
\phi(A) = \phi(X+\epsilon I) = J(X) + \epsilon I = J(A-\epsilon I) + \epsilon I = J(A).
$$
This completes the proof.
\end{proof}

We leave this as an exciting open problem to see if the theorem above remains valid also for general unital $C^*$-algebras.

\section{Nonsurjective maps between positive cones of commutative
$C^*$-algebras which preserve the norm of means of any finitely many elements}\label{S:3}

In this section we consider our preservers in the setting of commutative (but not necessarily unital) $C^*$-algebras;
more concretely, we consider those maps
between positive cones of algebras of continuous functions.  We point out that bijective maps which
preserve the norm of a general homogeneous mean of pairs of functions were characterized in Theorem 3 in \cite{MS15}. In
this section, we are interested in the nonsurjective (meaning that not necessarily surjective) case.

We begin with some necessary notation.
For a locally compact Hausdorff space $\mathrm{X}$, we denote its one-point compactification by $\mathrm{X}\cup \{\infty\}$. Let $C_0(\mathrm{X})$ be the commutative $C^*$-algebra of all complex-valued continuous functions on $\mathrm{X}$ vanishing at infinity equipped with the sup-norm
$\|f\|=\sup\{|f(x)|:x\in \mathrm{X}\}$.  As usual, we also write $C(\mathrm{X})$ for $C_0(\mathrm{X})$ when $\mathrm{X}$ is compact.
Denote by
$$
C_{0}(\mathrm{X})_+ = \{f\in C_0(\mathrm{X}): f(x)\geq 0, x\in \mathrm{X}\}
$$
the positive (semidefinite) cone of $C_0(\mathrm{X})$.

Urysohn's Lemma (see, e.g., \S2.12 in \cite{Rud}) tells us that for any nonempty
sets $K\subset V\subset \mathrm{X}$, with $K$ being compact and $V$ being open, there is a continuous function $f$ on $\mathrm{X}$ with values in $[0,1]$,
which is constant $1$ on $K$ and has compact support contained in $V$.
It follows,  when $\mathrm{X}$ is $\sigma$-compact, the set of strictly positive functions,
$$
C_0(\mathrm{X})_{++} = \{f\in C_0(\mathrm{X}): f(x)> 0, x\in \mathrm{X}\},
$$
is nonempty and  indeed dense in $C_0(\mathrm{X})_+$.
Observe that in the unital case, i.e., when $\mathrm{X}$ is compact,
$C(\mathrm{X})_{++}$ coincides with the set $C(\mathrm{X})_+^{-1}$ of all invertible elements of $C(\mathrm{X})_+$.
Furthermore, define
$
C_0(\mathrm{X})_{+,1}=\{g\in C_{0}(\mathrm{X})_+:\|g\|=1\}
$
and
$
C_0(\mathrm{X})_{++,1}=\{g\in C_{0}(\mathrm{X})_{++}:\|g\|=1\}.
$

In this section,  $F(\mathrm{X})$ denotes a subset of $C_0(\mathrm{X})_{+}$ which is assumed to contain sufficiently many functions to peak on  compact
$G_\delta$ subsets of $\mathrm{X}$. This means that for any nonempty compact  subset $K$ of $\mathrm{X}$, which is
also a $G_\delta$ set, i.e., a countable intersection
of open sets,  we have a function $f\in F(\mathrm{X})$ such that its \textit{peak set}, i.e.,
$
\operatorname{pk}(f)=\{x\in \mathrm{X} : f(x)=\|f\|\},
$
coincides with $K$.
By Urysohn's Lemma, examples of
such families $F(\mathrm{X})$ include $C_0(\mathrm{X})_+$, $C_0(\mathrm{X})_{+,1}$,
and in the case where $\mathrm{X}$ is $\sigma$-compact, $C_0(\mathrm{X})_{++}$ and $C_0(\mathrm{X})_{++,1}$, too.

We call a nonempty subset $A$ of $\mathrm{X}$ a \textit{level set}
if there is a function $f\in C_0(\mathrm{X})_+$ and a scalar $\lambda>0$ such that $A=\{x\in \mathrm{X}: f(x)=\lambda\}$.
Again, by Urysohn's Lemma, it is not difficult to see that level subsets of a locally compact Hausdorff space are exactly
those being nonempty, compact and $G_\delta$.

The following lemma collects those properties of $F(\mathrm{X})$ which we will use in the sequel.

\begin{lemma}\label{lem:F(X)}
Let $\mathrm{X}$ be a locally compact space and let $F(\mathrm{X})\subset C_{0}(\mathrm{X})_+$ contain sufficiently many functions to peak on compact $G_\delta$ subsets
of $\mathrm{X}$.
\begin{enumerate}[(a)]
  \item Let $x_1\in \mathrm{X}$ and $F$ be a closed set in $\mathrm{X}\cup\{\infty\}$ with $x_1\notin F$.
   Then there is $g\in F(\mathrm{X})$ such that
   $$
   x_1\in \operatorname{pk}(g)\quad\text{and}\quad  \operatorname{pk}(g)\cap F =\emptyset.
   $$
  \item Let $f\in C_0(\mathrm{X})_+$ and $x_1\in \mathrm{X}$.  Then there is $g\in F(\mathrm{X})$ such that
  $$
  x_1\in \operatorname{pk}(g) \subset \{x\in \mathrm{X}: f(x)=f(x_1)\}.
  $$

 \item For a net $\{x_\mu\}$ in $\mathrm{X}$ and a point $x_0\in \mathrm{X}\cup\{\infty\}$ we have $x_\mu \to x_0$    if and only if $g(x_\mu) \to g(x_0)$ for all $g\in F(\mathrm{X})$.
\end{enumerate}
\end{lemma}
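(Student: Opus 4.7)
My approach is to handle parts (a) and (b) by explicit constructions of compact $G_\delta$ subsets of $\mathrm{X}$ with the required properties, and then to invoke the defining assumption on $F(\mathrm{X})$ to produce the desired $g$. Part (c) will follow from (a) by a standard separation-by-functions argument, adapted to handle the point at infinity of the one-point compactification.

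For (a), since $F$ is closed in $\mathrm{X}\cup\{\infty\}$ and $x_1\notin F$, local compactness yields an open neighborhood $V$ of $x_1$ in $\mathrm{X}$ with $\overline{V}$ compact and $\overline{V}\cap F=\emptyset$. By Urysohn's lemma I choose $h\in C_c(\mathrm{X})$ with $0\leq h\leq 1$, $h(x_1)=1$, and $\operatorname{supp}(h)\subset V$. Then $K:=h^{-1}(\{1\})$ is closed in $\operatorname{supp}(h)$ (hence compact), equals $\bigcap_{n}\{x:h(x)>1-1/n\}$ (hence $G_\delta$), contains $x_1$, and is disjoint from $F$. The defining hypothesis now supplies the required $g\in F(\mathrm{X})$ with $\operatorname{pk}(g)=K$.

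For (b), the main obstacle is that the natural candidate $L:=\{x:f(x)=f(x_1)\}$ need not itself be compact: in particular, when $f(x_1)=0$ the closed set $L$ typically has non-compact closure and cannot arise as a peak set of any element of $C_0(\mathrm{X})_+$. The remedy is to cut $L$ down to a compact neighborhood of $x_1$. I pick, by Urysohn, a compactly supported $h\in C_0(\mathrm{X})_+$ with $h(x_1)=\|h\|=1$ and set $K:=h^{-1}(\{1\})\cap L$; then $K$ is closed in the compact set $h^{-1}(\{1\})$, hence compact; it is $G_\delta$ as a countable intersection of open conditions on both $h$ and $f$; and $x_1\in K\subset L$ by construction. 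The defining hypothesis again furnishes a suitable $g$, uniformly handling both the cases $f(x_1)>0$ and $f(x_1)=0$.

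For (c), the ``only if'' direction is automatic, since each $g\in F(\mathrm{X})\subset C_0(\mathrm{X})$ extends continuously to $\mathrm{X}\cup\{\infty\}$ by setting $g(\infty)=0$. For the converse I argue by contradiction: if $x_\mu\not\to x_0$, there exist an open neighborhood $U$ of $x_0$ in $\mathrm{X}\cup\{\infty\}$ and a subnet of $\{x_\mu\}$ lying in the compact set $F_0:=(\mathrm{X}\cup\{\infty\})\setminus U$. When $x_0\in\mathrm{X}$, part (a) produces $g\in F(\mathrm{X})$ with $g(x_0)=\|g\|$ and $\operatorname{pk}(g)\cap F_0=\emptyset$; since the continuous extension of $g$ attains its maximum on the compactum $F_0$ at some point where $g<\|g\|$, we deduce that $g(x_\mu)$ stays bounded away from $g(x_0)$ along the subnet, contradicting $g(x_\mu)\to g(x_0)$. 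When $x_0=\infty$, the failure of $x_\mu\to\infty$ produces a compact $K_0\subset\mathrm{X}$ containing a subnet; extracting a further subnet converging in $K_0$ to some $x^{*}\in\mathrm{X}$ and applying part (a) to $x^{*}$ produces $g\in F(\mathrm{X})$ with $g(x^{*})=\|g\|>0$, contradicting the assumed convergence $g(x_\mu)\to g(\infty)=0$.
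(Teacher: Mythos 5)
Your proof is correct and follows essentially the same route as the paper: construct an appropriate nonempty compact $G_\delta$ set, invoke the peaking hypothesis on $F(\mathrm{X})$ to produce $g$, and deduce (c) from (a) by a subnet/contradiction argument in $\mathrm{X}\cup\{\infty\}$. The only cosmetic differences are that in (b) you treat the cases $f(x_1)>0$ and $f(x_1)=0$ uniformly via the compact $G_\delta$ set $h^{-1}(\{1\})\cap\{x: f(x)=f(x_1)\}$, where the paper splits into cases and uses the auxiliary function $h\cdot(1-\min\{1,f\})$ when $f(x_1)=0$, and in (c) you replace the paper's further sub-case analysis of a convergent subnet by bounding $g$ on the compact complement of the chosen neighborhood.
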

\begin{proof}
  The assertion (a) is  trivial   since Urysohn's Lemma provides functions in $C_0(\mathrm{X})$ peaking at $x_1$ and vanishing on $F$.

  For (b),
   it is trivial if $f(x_1)>0$ since the level set $\{x\in \mathrm{X}: f(x)=f(x_1)\}$ is compact and $G_\delta$.
  Suppose $f(x_1)=0$.
  Using Urysohn's Lemma we can have     $h\in C_0(\mathrm{X})_+$ with values in $[0,1]$ such that $h(x_1)=1$.
  Let $k(x) = h(x)(1-\min\{1, f(x)\})$
   for all $x\in \mathrm{X}$.  Then $k\in C_0(\mathrm{X})$ and $x_1\in \operatorname{pk}(k)\subset f^{-1}(0)$.  Choose $g\in F(\mathrm{X})$ with
   $\operatorname{pk}(g)=\operatorname{pk}(k)$, and we are done.

  For (c), one direction is plain.  Suppose now that $g(x_\mu)\to g(x_0)$ for every $g\in F(\mathrm{X})$.
We need to show that $x_\mu \to x_0$.
Suppose first that $x_0\in \mathrm{X}$.  Assume  on the contrary that there is an open set $V$ in $\mathrm{X}$ such that
$x_0\in V$ and there is a subnet $\{z_\alpha\}$ of $\{x_\mu\}$ whose elements
are in the closed set
$\mathrm{X}\setminus V$.  We can choose a nonzero $h\in F(\mathrm{X})$ such that $x_0 \in\operatorname{pk}(h)\subset V$ by part (a).
By compactness, $\{z_\alpha\}$ has a subnet $\{w_\beta\}$ which is convergent in $\mathrm{X}\cup\{ \infty\}$. If $w_\beta \to \infty$, then we
have $h(w_\beta)\to 0$ which contradicts $h(x_\mu)\to h(x_0)=\|h\|\neq 0$. If $w_\beta \to w\in \mathrm{X}$, then we have, on one hand,
that $w\in \mathrm{X}\setminus V$ and, on the other hand, that $h(w)=h(x_0)=\|h\|$ yielding $w\in \operatorname{pk}(h)\subset V$,
a contradiction again. Therefore, we have $x_\mu \to x_0$.

Finally, if $x_0=\infty$ then $g(x_\mu)\to 0$ for all $g\in F(\mathrm{X})$.  If  $\{x_\mu\}$ does not converge to $\infty$ then
there is a subnet $\{u_\alpha\}$ converging to $u_0$ in $\mathrm{X}$.  It turns out that $g(u_0)=0$ for all $g\in F(\mathrm{X})$.  This conflicts with
part (a), and thus completes the proof.
\end{proof}

Let now $\mathrm{X},\mathrm{Y}$ be locally compact Hausdorff spaces.  Let
$F(\mathrm{X})\subset C_{0}(\mathrm{X})_+$  contain sufficiently many functions to peak on  compact $G_\delta$ subsets of $\mathrm{X}$. In what
follows we will show that any finitely norm-additive map $T: F(\mathrm{X})\rightarrow C_{0}(\mathrm{Y})_+$ can be written as a generalized composition
operator. Equivalently, if $T$ preserves the norm of the arithmetic mean of any collection of finitely many elements, then $T$ is necessarily
a generalized composition operator.
In fact, along the same line of reasoning, we also obtain
similar results for nonsurjective maps
which preserve the norm of the geometric or the harmonic mean of any collection of finitely many elements.
Actually, we even have a result concerning general power means, namely, Theorem \ref{thm:p-means} stated in the Introduction.

We consider the cases of the three different means separately.
Before going into the details of the proof, we introduce some further notation.
Let  $T: F(\mathrm{X})\rightarrow C_{0}(\mathrm{Y})_+$ be a nonsurjective map preserving the norm of the arithmetic,
the geometric or the harmonic mean of any collection of finitely many elements.
In each case, $T$ preserves the norm, namely, $\|Tf\| = \|f\|$ for any $f\in F(\mathrm{X})$.

For any $x\in \mathrm{X}$ and $f\in F(\mathrm{X})$, beside the peak set
\begin{align}
&\operatorname{pk}(f)=\{z\in \mathrm{X}: f(z)=\|f\|\},\nonumber
\intertext{we define the following sets}
&\operatorname{PKat}(x)=\{g\in F(\mathrm{X}): g(x)=\|g\|\},\nonumber\\
&\PSupp_T(x)=\bigcap_{h\in \operatorname{PKat}(x)}\operatorname{pk}(T h) =\left\{y\in \mathrm{Y}: T(\operatorname{PKat}(x))\subset \operatorname{PKat}(y)\right\},\label{E:Mol}\\
&\Supp_T(x, f)=\PSupp_T(x)\cap\{y\in \mathrm{Y}: (Tf)(y)=f(x)\},\nonumber\\
&\Supp_T(x)=\bigcap_{f\in F(\mathrm{X})}\Supp_T(x, f).
\nonumber
\end{align}
Since $T$ preserves the norm, it is apparent that
\begin{align}\label{eq:yinPsupp}
y\in \PSupp_T(x)\quad \text{if and only if} \quad
(Tf)(y) = f(x)\ \text{whenever}\ f\in F(\mathrm{X})\ \text{peaks at}\ x.
\end{align}
It also holds
\begin{gather}\label{eq:yinsupp}
y\in \Supp_T(x)\quad\text{if and only if}\quad (Tf)(y) = f(x)\ \text{whenever}\ f\in F(\mathrm{X}).
\end{gather}

While $\Supp_T(x)\subseteq \PSupp_T(x)$ for any  $x\in \mathrm{X}$, the following example tells us
that the inclusion can be strict.

\begin{example}\label{eg:Supp-not-PSupp}
  Consider the compact discrete spaces $\mathrm{X}=\{1,2\}$ and $\mathrm{Y}=\{1,2,3\}$.
  Let $F(\mathrm{X})=\{f_1,f_2,f_3\}\subset C(\mathrm{X})_+^{-1}$, in which
  $$
  f_1(1)=f_2(2)=f_3(1)=f_3(2)=1 \quad\text{and}\quad f_1(2)=f_2(1)=1/2.
  $$
  Define $T: F(\mathrm{X})\to C(\mathrm{Y})_+^{-1}$ by
  $$
  (Tf)(1)=f(1),\quad (Tf)(2)=f(2)\quad\text{and}\quad (Tf)(3) = \frac{4f(1)-1}{3}, \quad f\in F(\mathrm{X}).
  $$
  It is easy to check that $T$ preserves the norm of the arithmetic, geometric and harmonic means of any
   finite family of functions from $F(\mathrm{X})$.
  Moreover,
  $$
  \PSupp_T(1) = \{1,3\} \quad\text{and}\quad \PSupp_T(2) = \{2\},
  $$
  while
  $$
  \Supp_T(1) = \{1\} \quad\text{and}\quad \Supp_T(2) = \{2\}.
  $$
  \end{example}

\subsection{Preservers of the norm of the arithmetic mean of any finite collection}

In this subsection, assume that $\mathrm{X},\mathrm{Y}$ are locally compact Hausdorff spaces, and $F(\mathrm{X})\subset C_{0}(\mathrm{X})_+$ is a subset which contains sufficiently many
functions to peak on  compact $G_\delta$ subsets of $\mathrm{X}$.
We consider a (not necessarily additive, positive homogeneous, or surjective)
map $T: F(\mathrm{X})\rightarrow C_{0}(\mathrm{Y})_+$ which preserves the norm of the arithmetic mean of any finite collection of elements of $F(\mathrm{X})$. We note
that $T$ necessarily preserves the norm.

The proof of the fact that $T$ is a generalized composition operator as stated in Theorem \ref{thm:p-means} will be carried out through several observations. We begin with the
following lemma.

\begin{lemma}\label{le302}
For any $x\in \mathrm{X}$, the set
$
\PSupp_T(x)
$
is compact and nonempty.
\end{lemma}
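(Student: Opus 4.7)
The plan is to establish the two properties separately, with nonemptiness being the substantive part.

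For compactness, I would first note that $\operatorname{PKat}(x)$ contains at least one nonzero function: by Lemma \ref{lem:F(X)}(a) applied to any closed set in $\mathrm{X}\cup\{\infty\}$ avoiding $x$ (say $\{\infty\}$), there is $g\in F(\mathrm{X})$ with $x\in \operatorname{pk}(g)$, and such $g$ is nonzero since $\operatorname{pk}(g)$ being a compact subset of $\mathrm{X}$ forces $\|g\|>0$. For any such nonzero $h\in \operatorname{PKat}(x)$, the set $\operatorname{pk}(Th)$ is closed (as the preimage of $\{\|Th\|\}$ under a continuous function) and it is contained in the compact set $\{y\in \mathrm{Y}:(Th)(y)\geq \|Th\|/2\}$, which is compact because $Th\in C_0(\mathrm{Y})$ and $\|Th\|=\|h\|>0$. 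Hence each $\operatorname{pk}(Th)$ is compact, and $\PSupp_T(x)$, being an intersection of closed subsets of one fixed such compact set, is itself compact.

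For nonemptiness, the key idea is to invoke the finite intersection property and leverage the preservation of the norm of the arithmetic mean for finite families. Given $h_1,\ldots,h_n\in \operatorname{PKat}(x)$, I would observe that since $h_i(x)=\|h_i\|$ for every $i$,
\[
\|h_1+\cdots+h_n\|\geq (h_1+\cdots+h_n)(x)=\|h_1\|+\cdots+\|h_n\|,
\]
with the reverse inequality holding trivially, so equality holds. Using the hypothesis that $T$ preserves the norm of the arithmetic mean of finite collections, and noting $\|Th_i\|=\|h_i\|$, I get
\[
\|Th_1+\cdots+Th_n\|=\|h_1+\cdots+h_n\|=\|Th_1\|+\cdots+\|Th_n\|.
\]
Since $Th_1+\cdots+Th_n\in C_0(\mathrm{Y})_+$ attains its supremum norm at some point $y_0\in \mathrm{Y}$, the equality $\sum_i (Th_i)(y_0)=\sum_i\|Th_i\|$ combined with $(Th_i)(y_0)\leq \|Th_i\|$ forces $(Th_i)(y_0)=\|Th_i\|$ for every $i$. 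Therefore $y_0\in \bigcap_{i=1}^n \operatorname{pk}(Th_i)$, establishing the finite intersection property.

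Finally, fixing any nonzero $h_0\in \operatorname{PKat}(x)$, the family $\{\operatorname{pk}(Th)\cap \operatorname{pk}(Th_0): h\in \operatorname{PKat}(x)\}$ consists of closed subsets of the compact space $\operatorname{pk}(Th_0)$ with the finite intersection property, so its total intersection is nonempty, giving $\PSupp_T(x)\neq \emptyset$. The main (and only real) obstacle is the finite intersection step, and it is handled cleanly by the additivity of the supremum on peaking families combined with the norm-of-sums preservation hypothesis; this is the mechanism that makes $F(\mathrm{X})$'s peaking richness interact with $T$ nontrivially.
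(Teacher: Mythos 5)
Your proposal is correct and follows essentially the same route as the paper: the same computation showing $\|Th_1+\cdots+Th_n\|=\sum_i\|Th_i\|$ forces a common peak point, hence the finite intersection property for the compact peak sets, hence nonemptiness. Your explicit treatment of compactness (each $\operatorname{pk}(Th)$ for nonzero $h$ being a closed subset of a compact sublevel-type set) is a detail the paper leaves implicit, but the substance is identical.
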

\begin{proof}
Choose $h_1,h_2,\ldots,h_n\in \operatorname{PKat}(x)$. Observe that
\begin{align}\label{ineq307}
\|T h_1+T h_2+\cdots+T h_n\|&=\|h_1+h_2+\cdots+h_n\|= h_1(x)+h_2(x)+\cdots + h_n(x) \notag\\
&=\|h_1\| + \|h_2\| +\cdots +\|h_n\|.
\end{align}
It follows from (\ref{ineq307}) and  the fact  $T$ preserves the norm that there exists a point $y_1\in \mathrm{Y}$ with
$$
(T h_1)(y_1)+(T h_2)(y_1)+\cdots+(T h)_n(y_1) = \|Th_1\| + \|Th_2\| + \cdots +\|Th_n\|.
$$
This forces that
$(T h_i)(y_1)=\|Th_i\|$ for all $1\leq i\leq n$. Consequently,
\begin{align*}%\label{ineq308}
\bigcap_{i=1}^{n}\operatorname{pk}(T h_i)\neq\emptyset.
\end{align*}
Hence, the family $\{\operatorname{pk}(T h)\}_{0\neq h\in \operatorname{PKat}(x)}$ of compact subsets of $\mathrm{Y}$ has the finite intersection property. It follows that the compact set
\begin{align*}%\label{ineq309}
\PSupp_T(x)
=\bigcap_{h\in \operatorname{PKat}(x)}\operatorname{pk}(T h)\neq\emptyset.
\end{align*}
\end{proof}

\begin{lemma}\label{le304}
If $x_1,x_2\in \mathrm{X}$, $x_1\neq x_2$, then we have $\PSupp_T(x_1)\cap \PSupp_T(x_2)=\emptyset$.
\end{lemma}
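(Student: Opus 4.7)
I will argue by contradiction. Suppose that there exist distinct points $x_1,x_2\in \mathrm{X}$ and a common point $y\in \PSupp_T(x_1)\cap \PSupp_T(x_2)$. My goal is to produce two peaking functions $h_1,h_2\in F(\mathrm{X})$ whose peak sets in $\mathrm{X}$ are compact and disjoint, so that on the one hand $\|h_1+h_2\| < \|h_1\|+\|h_2\|$, while on the other hand the assumption on $y$ forces $(Th_1)(y)+(Th_2)(y) = \|h_1\|+\|h_2\|$. Combined with finite norm-additivity applied to the pair $\{h_1,h_2\}$, namely $\|Th_1+Th_2\|=\|h_1+h_2\|$, this will yield the required contradiction.

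To build $h_1$ and $h_2$, I will apply Lemma \ref{lem:F(X)}(a) twice. First, since $\{x_2\}$ is closed in $\mathrm{X}\cup\{\infty\}$ and $x_1\notin\{x_2\}$, part (a) gives an $h_1\in F(\mathrm{X})$ with $x_1\in\operatorname{pk}(h_1)$ and $x_2\notin\operatorname{pk}(h_1)$. Its peak set $\operatorname{pk}(h_1)$ is compact in $\mathrm{X}$, hence closed in $\mathrm{X}\cup\{\infty\}$, and does not contain $x_2$. A second application of Lemma \ref{lem:F(X)}(a) with $F=\operatorname{pk}(h_1)$ then produces an $h_2\in F(\mathrm{X})$ with $x_2\in\operatorname{pk}(h_2)$ and $\operatorname{pk}(h_2)\cap\operatorname{pk}(h_1)=\emptyset$. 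Both $h_1$ and $h_2$ are nonzero (they actually peak on a proper compact $G_\delta$ set), so $\|h_1\|,\|h_2\|>0$.

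Because the two peak sets are disjoint, at every point $z\in \mathrm{X}$ at least one of $h_1(z)<\|h_1\|$ or $h_2(z)<\|h_2\|$ holds, so $(h_1+h_2)(z)<\|h_1\|+\|h_2\|$ pointwise. Since $h_1+h_2\in C_0(\mathrm{X})_+$ is nonzero, it attains its supremum at some point of $\mathrm{X}$, giving the strict inequality $\|h_1+h_2\|<\|h_1\|+\|h_2\|$. On the other hand, since $h_i\in\operatorname{PKat}(x_i)$ and $y\in\PSupp_T(x_i)$, the characterization \eqref{eq:yinPsupp} together with $\|Th_i\|=\|h_i\|$ yields $(Th_i)(y)=\|h_i\|$ for $i=1,2$. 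Therefore
\[
\|Th_1+Th_2\|\geq (Th_1)(y)+(Th_2)(y)=\|h_1\|+\|h_2\|,
\]
which, combined with the pair case of finite norm-additivity $\|Th_1+Th_2\|=\|h_1+h_2\|$, contradicts $\|h_1+h_2\|<\|h_1\|+\|h_2\|$. I do not foresee any real obstacle here: the only delicate point is ensuring that the two peak sets can be chosen compact and disjoint, and Lemma \ref{lem:F(X)}(a) handles this in two quick applications.
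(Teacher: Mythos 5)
Your proof is correct and follows essentially the same route as the paper: two applications of Lemma \ref{lem:F(X)}(a) produce peaking functions at $x_1$ and $x_2$ with disjoint peak sets, and the contradiction comes from comparing $\|Th_1+Th_2\|\geq\|h_1\|+\|h_2\|$ at the common point $y$ with the norm-additivity identity $\|Th_1+Th_2\|=\|h_1+h_2\|<\|h_1\|+\|h_2\|$. The only cosmetic difference is that you verify the strict inequality directly (the sup of $h_1+h_2\in C_0(\mathrm{X})_+$ is attained and is pointwise below $\|h_1\|+\|h_2\|$), whereas the paper states the contrapositive, that equality would force a common peak point.
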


\begin{proof}
It follows from Lemma \ref{lem:F(X)} that there exists a function $g_1\in F(\mathrm{X})$ such that $g_1(x_1)=\|g_1\|$ and $g_1(x_2)<\|g_1\|$, and another one
$g_2\in F(\mathrm{X})$ such that $g_2(x_2)=\|g_2\|$ and $g_2(x)<\|g_2\|$ for all $x\in \operatorname{pk}(g_1)$. Clearly, $g_1\in \operatorname{PKat}(x_1)$ and $g_2\in \operatorname{PKat}(x_2)$.

Assume that $y\in \PSupp_T(x_1)\cap \PSupp_T(x_2)$. Then $Tg_1, Tg_2\in \operatorname{PKat}(y)$, and hence
\begin{align*}%\label{ineq314}
\|Tg_1+Tg_2\|\geq(Tg_1+Tg_2)(y)=\|Tg_1\| + \|Tg_2\| = \|g_1\| + \|g_2\|.
\end{align*}
On the other hand, $\|Tg_1+Tg_2\|=\|g_1+g_2\|$. Therefore, we would have $\|g_1+g_2\|=\|g_1\| + \|g_2\|$ which forces that $g_1,g_2$ peak at some common point, a contradiction.
\end{proof}

\begin{lemma}\label{le301} For any $g_0, f\in C_{0}(\mathrm{X})_+$, we have
\begin{align}\label{ineq302}
\lim_{n\rightarrow\infty} \left(\|n g_0+f\|-n\|g_0\|\right)=\max \left\{f(x):x\in \operatorname{pk}(g_0)\right\}.
\end{align}
\end{lemma}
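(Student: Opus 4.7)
The plan is to set $M = \max\{f(x) : x \in \operatorname{pk}(g_0)\}$ and bound $\|ng_0+f\| - n\|g_0\|$ from both sides by $M$ and $M+\varepsilon$, respectively, for sufficiently large $n$. Note first that $M$ is well-defined: if $\|g_0\|>0$, then $\operatorname{pk}(g_0)$ is a compact subset of $\mathrm{X}$ (being closed in $\mathrm{X}$ and contained in $\{x : g_0(x) \geq \|g_0\|/2\}$, which is compact because $g_0 \in C_0(\mathrm{X})$), so $f|_{\operatorname{pk}(g_0)}$ attains its maximum; the case $\|g_0\|=0$ reduces to $\lim\|ng_0+f\|=\|f\|=\max f$, which is immediate.

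For the lower bound, pick $x_0 \in \operatorname{pk}(g_0)$ with $f(x_0)=M$. Then
\[
\|ng_0+f\| \geq (ng_0+f)(x_0) = n\|g_0\| + M,
\]
so $\|ng_0+f\|-n\|g_0\| \geq M$ for every $n$.

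For the upper bound, fix $\varepsilon>0$ and set $K = \{x \in \mathrm{X} : f(x) \geq M+\varepsilon\}$. Since $f \in C_0(\mathrm{X})_+$ and $M+\varepsilon>0$, $K$ is compact. By the definition of $M$, $K \cap \operatorname{pk}(g_0) = \emptyset$, so $g_0 < \|g_0\|$ on $K$; by continuity and compactness there is $\delta>0$ with $g_0 \leq \|g_0\|-\delta$ on $K$. Choose $n$ so large that $n\delta \geq \|f\|$. Then on $K$,
\[
ng_0(x) + f(x) \leq n\|g_0\| - n\delta + \|f\| \leq n\|g_0\| \leq n\|g_0\| + M + \varepsilon,
\]
while on $\mathrm{X}\setminus K$ we have $f(x) < M+\varepsilon$ and trivially $ng_0(x)+f(x) < n\|g_0\| + M + \varepsilon$. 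Hence $\|ng_0+f\| - n\|g_0\| \leq M + \varepsilon$ for all large $n$.

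Combining the two estimates gives $M \leq \liminf_n(\|ng_0+f\|-n\|g_0\|) \leq \limsup_n(\|ng_0+f\|-n\|g_0\|) \leq M+\varepsilon$ for each $\varepsilon>0$, which yields the claimed limit. The only mildly delicate point is ensuring $K$ is compact (so that the uniform gap $\delta$ exists) despite $\mathrm{X}$ being merely locally compact; this is precisely where the $C_0$ hypothesis on $f$ is used.
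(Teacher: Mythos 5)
Your proof is correct, and it takes a genuinely different route from the paper's. The paper picks, for each $n$, a maximizer $x_n$ of $ng_0+f$, shows $g_0(x_n)\to\|g_0\|$ so that every cluster point of $\{x_n\}$ in $\mathrm{X}\cup\{\infty\}$ lies in $\operatorname{pk}(g_0)$, deduces $f(x_n)\geq C$ and that $f(x_n)\to C$, and then squeezes $\|ng_0+f\|-n\|g_0\|$ between $f(x_n)$ and $C$. You instead prove the two one-sided estimates directly: the lower bound by evaluating at a maximizer of $f$ on $\operatorname{pk}(g_0)$, and the upper bound by splitting $\mathrm{X}$ into the compact super-level set $K=\{f\geq M+\varepsilon\}$ (where the uniform gap $g_0\leq\|g_0\|-\delta$ kills the $f$ contribution once $n\delta\geq\|f\|$) and its complement (where $f<M+\varepsilon$ trivially suffices). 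Your argument is more elementary --- it avoids maximizing sequences, cluster points and the one-point compactification entirely --- and it even yields an explicit threshold $n\geq\|f\|/\delta$ beyond which $\|ng_0+f\|-n\|g_0\|\leq M+\varepsilon$. What the paper's method buys is a uniform template: essentially the same maximizing-sequence analysis is recycled verbatim for the multiplicative version (Lemma \ref{leG01}) and the harmonic version (Lemma \ref{leH01}), whereas your splitting argument would need to be re-engineered for each mean. Your handling of the side issues (compactness and nonemptiness of $\operatorname{pk}(g_0)$, the degenerate case $g_0=0$, and the possibility $K=\emptyset$, which is harmless since the estimate on $K$ is then vacuous) is sound.
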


\begin{proof}
Let
$$
C=\max \left\{f(x):x\in \operatorname{pk}(g_0)\right\}.
$$
Since $\mathrm{X}$ is locally compact, for any $n\in \mathbb{N}$, there exists $x_n\in \mathrm{X}$ such that
\begin{align}
\label{ineq304}ng_0(x_n)+f(x_n)=\|ng_0+f\|.
\end{align}
From \eqref{ineq304} we infer
\begin{align*}%\label{ineq303}
\lim_{n\rightarrow\infty}g_0(x_n)&=\lim_{n\rightarrow\infty}\frac{\|ng_0+f\|-f(x_n)}{n}
=\lim_{n\rightarrow\infty}\left\|g_0+\frac{f}{n}\right\|-\lim_{n\rightarrow\infty}\frac{f(x_n)}{n}=\|g_0\|.
\end{align*}
This implies that every cluster point $z$ of the sequence $\{x_n\}$ (in the compact space $\mathrm{X}\cup\{\infty\}$) belongs to $\operatorname{pk}(g_0)$.
We further claim that
$f(z)= C$.
Indeed, from (\ref{ineq304})  we deduce that
$$
f(x_n) = \|ng_0 + f\| - ng_0(x_n) \geq n(g_0(x)-g_0(x_n)) + f(x) \geq f(x), \quad x\in \operatorname{pk}(g_0).
$$
Hence, $f(x_n)\geq C$ for all $n\in \mathbb{N}$, and thus $f(z)= C$ holds for every cluster point $z$ of the sequence $\{x_n\}$.
This forces that the bounded scalar sequence  $\{f(x_n)\}$ converges to $C$.
Consequently,
\begin{align*}%\label{ineq305}
f(x_n)&\geq ng_0(x_n)+f(x_n)-n\|g_0\|=\|ng_0+f\|-n\|g_0\|\nonumber\\
&\geq ng_0(z)+f(z)-n\|g_0\|=f(z)=C.
\end{align*}
Letting $n\rightarrow\infty$, we get (\ref{ineq302}).
\end{proof}

\begin{lemma}\label{le306}
Let $x_0\in \mathrm{X}$ and let $g_1,g_2,\ldots,g_m\in \operatorname{PKat}(x_0)$. Then for any $f\in C_{0}(\mathrm{X})_+$ we have
\begin{align}\label{ineq401}
\lim_{n\rightarrow\infty}\left(\left\|n\sum_{j=1}^m g_j+f\right\|-n\sum_{j=1}^m \left\|g_j\right\|\right)=\max\left\{f(x): x\in \bigcap_{j=1}^m \operatorname{pk}(g_j)\right\}.
\end{align}
\end{lemma}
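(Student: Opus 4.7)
The plan is to reduce Lemma \ref{le306} to the single-function case already established in Lemma \ref{le301}. Concretely, I would set $g_0 := \sum_{j=1}^{m} g_j$ and apply Lemma \ref{le301} to this $g_0$ together with the given $f$. For the reduction to yield exactly \eqref{ineq401}, two elementary identities must be verified first:
\[
\|g_0\| = \sum_{j=1}^{m}\|g_j\|
\qquad\text{and}\qquad
\operatorname{pk}(g_0) = \bigcap_{j=1}^{m}\operatorname{pk}(g_j).
\]

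The first identity is immediate because all $g_j$ peak at the common point $x_0$, so $g_0(x_0) = \sum_{j=1}^{m} g_j(x_0) = \sum_{j=1}^{m}\|g_j\|$, while the reverse bound $g_0(x) \leq \sum_{j=1}^{m}\|g_j\|$ is automatic pointwise. The second identity follows from an attainment argument: an equality $g_0(x) = \sum_{j=1}^{m}\|g_j\|$ forces each summand $g_j(x)$ to attain its own maximum $\|g_j\|$, since each term is already bounded above by $\|g_j\|$; conversely, a point in the intersection of all the peak sets clearly makes $g_0$ attain its maximum.

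With these two identities in hand, an application of Lemma \ref{le301} to $g_0$ and $f$ gives
\[
\lim_{n\to\infty}\bigl(\|ng_0 + f\| - n\|g_0\|\bigr) = \max\{f(x) : x \in \operatorname{pk}(g_0)\},
\]
which is precisely \eqref{ineq401}. There is no substantive obstacle: the lemma is a direct corollary of Lemma \ref{le301}, leveraging only the purely algebraic observation that a finite sum of nonnegative functions sharing a common peak point peaks exactly on the intersection of the individual peak sets.
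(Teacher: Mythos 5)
Your proposal is correct and follows exactly the paper's own argument: set $g_0=\sum_{j=1}^m g_j$, note that the common peak point $x_0$ gives $\|g_0\|=\sum_{j=1}^m\|g_j\|$ and $\operatorname{pk}(g_0)=\bigcap_{j=1}^m\operatorname{pk}(g_j)$, and then apply Lemma \ref{le301}. Nothing is missing.
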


\begin{proof}
Observe that
$$
\left\|\sum_{j=1}^m g_j\right\| = \sum_{j=1}^m \left\|g_j\right\|
\quad\text{and}\quad
\operatorname{pk}\left(\sum_{j=1}^m g_j\right)=\bigcap_{j=1}^m \operatorname{pk}(g_j).
$$
On the other hand, it follows from Lemma \ref{le301} that
\begin{align}\label{ineq402}
\lim_{n\rightarrow\infty}\left(\left\|n\left(\sum_{j=1}^m g_j\right)+f\right\|-n\left\|\sum_{j=1}^m g_j\right\|\right)
=\max\left\{f(z): z\in \operatorname{pk}\left(\sum_{j=1}^m g_j\right)\right\}.
\end{align}
Then (\ref{ineq401}) follows from (\ref{ineq402}).
\end{proof}

Together with \eqref{eq:yinPsupp}, Lemma \ref{le303} tells us
the role of the set $\PSupp_T(x)$, but this is the best one can say as demonstrated in Example \ref{eg:Supp-not-PSupp}.

\begin{lemma}\label{le303}
Let $x_0\in \mathrm{X}$. Then we have
$$
(Tf)(y)\leq f(x_0), \quad  f\in F(\mathrm{X}), y\in \PSupp_T(x_0).
$$
\end{lemma}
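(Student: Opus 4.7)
The plan is to combine Lemma \ref{le306} applied on both the $\mathrm{X}$-side and the $\mathrm{Y}$-side with the finite norm-additivity of $T$. The key observation is that the finite norm-additivity yields
$$
\|n \cdot Tg + Tf\| \;=\; \|\underbrace{Tg+\cdots+Tg}_{n}+Tf\| \;=\; \|ng+f\|
$$
for every $g,f\in F(\mathrm{X})$ and every $n\in \mathbb{N}$, and in particular $\|Tg\|=\|g\|$. Hence the two limits produced by Lemma \ref{le306}, one computed in $C_0(\mathrm{X})_+$ with the family $\{g\}$ and the other in $C_0(\mathrm{Y})_+$ with $\{Tg\}$, must coincide.

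Given $f\in F(\mathrm{X})$ and $x_0\in \mathrm{X}$, I would first invoke Lemma \ref{lem:F(X)}(b) to pick a single $g\in F(\mathrm{X})$ with
$$
x_0\in \operatorname{pk}(g)\subset \{x\in \mathrm{X}: f(x)=f(x_0)\}.
$$
Then $g\in \operatorname{PKat}(x_0)$, so by the defining property of $\PSupp_T(x_0)$, any $y\in \PSupp_T(x_0)$ lies in $\operatorname{pk}(Tg)$. Applying Lemma \ref{le306} (the case $m=1$) on the $\mathrm{X}$-side gives
$$
\lim_{n\to\infty}\bigl(\|ng+f\|-n\|g\|\bigr)=\max\{f(x): x\in \operatorname{pk}(g)\}=f(x_0),
$$
where the last equality uses the inclusion $\operatorname{pk}(g)\subset f^{-1}(f(x_0))$.

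On the $\mathrm{Y}$-side, Lemma \ref{le306} applied to $Tg$ and $Tf$ yields
$$
\lim_{n\to\infty}\bigl(\|n\,Tg+Tf\|-n\|Tg\|\bigr)=\max\{(Tf)(z): z\in \operatorname{pk}(Tg)\}\;\geq\; (Tf)(y),
$$
since $y\in \operatorname{pk}(Tg)$. Because $\|nTg+Tf\|=\|ng+f\|$ and $\|Tg\|=\|g\|$, the two limits above are equal, giving $(Tf)(y)\leq f(x_0)$.

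There is no serious obstacle here once Lemmas \ref{le306} and \ref{lem:F(X)}(b) are available; the only subtlety is to choose $g$ so that its peak set sits inside a level set of $f$ containing $x_0$, which is exactly what Lemma \ref{lem:F(X)}(b) provides. Note that a single function $g$ suffices, so one does not even need the full strength of Lemma \ref{le306} with $m>1$; the more general version will be useful later for strengthening the conclusion from $\PSupp_T$ to $\Supp_T$.
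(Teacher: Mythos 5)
Your proposal is correct and follows essentially the same route as the paper: choose $g$ via Lemma \ref{lem:F(X)}(b) so that $\operatorname{pk}(g)$ sits inside the level set $\{x: f(x)=f(x_0)\}$, use finite norm-additivity to equate $\|ng+f\|-n\|g\|$ with $\|nTg+Tf\|-n\|Tg\|$, and pass to the limit via the asymptotic lemma on both sides. The paper cites Lemma \ref{le301} directly rather than the $m=1$ case of Lemma \ref{le306}, but as you note these coincide, so the difference is purely cosmetic.
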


\begin{proof}
Fix any $f\in F(\mathrm{X})$.
By Lemma \ref{lem:F(X)}, there is a  function
$g_0\in F(\mathrm{X})$ such that
$$
x_0\in \operatorname{pk}(g_0)\subset \{x\in \mathrm{X}: f(x)=f(x_0)\}.
$$
Lemma \ref{le301} implies that
\begin{align*}%\label{ineq312}
f(x_0)&=\max\{f(x):x\in \operatorname{pk}(g_0)\}\nonumber\\
&=\lim_{n\rightarrow\infty}(\|ng_0+f\|-n\|g_0\|)\nonumber\\
&=\lim_{n\rightarrow\infty}(\|n(Tg_0)+Tf\|-n\|Tg_0\|)\nonumber\\
&=\max\{(Tf)(y):y\in \operatorname{pk}(Tg_0)\}.
\end{align*}
Since $g_0\in \operatorname{PKat}(x_0)$, we have $\PSupp_T(x_0)\subset \operatorname{pk}(Tg_0)$ and
the assertion follows.
\end{proof}

\begin{lemma}\label{le305}
The set $\Supp_T(x_0)$ is compact and nonempty for any $x_0\in \mathrm{X}$.
\end{lemma}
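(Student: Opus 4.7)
The plan is to split the claim into two parts: compactness is cheap, and nonemptiness requires a finite intersection argument that reuses the machinery of Lemmas~\ref{le306}, \ref{le303}, and \ref{le301}.

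For compactness, observe that for each $f\in F(\mathrm{X})$ the set
\[
\Supp_T(x_0,f)=\PSupp_T(x_0)\cap (Tf)^{-1}(\{f(x_0)\})
\]
is the intersection of the compact set $\PSupp_T(x_0)$ (Lemma~\ref{le302}) with a closed set (continuity of $Tf$), so it is compact. Hence $\Supp_T(x_0)$ is a closed subset of the compact set $\PSupp_T(x_0)$, and thus compact. The substantive work is to show that the family $\{\Supp_T(x_0,f)\}_{f\in F(\mathrm{X})}$ has the finite intersection property.

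For nonemptiness, I would prove the stronger statement: for any finite choice $h_1,\dots,h_m\in \operatorname{PKat}(x_0)$ and $f_1,\dots,f_n\in F(\mathrm{X})$, the set
\[
\Bigl(\bigcap_{l=1}^m \operatorname{pk}(Th_l)\Bigr)\cap \bigcap_{i=1}^n \{y\in\mathrm{Y}: (Tf_i)(y)=f_i(x_0)\}
\]
is nonempty. To do this, first apply Lemma~\ref{lem:F(X)}(b) to obtain, for each $i$, a function $g_i\in F(\mathrm{X})$ such that $x_0\in\operatorname{pk}(g_i)\subset\{f_i=f_i(x_0)\}$; in particular $g_i\in\operatorname{PKat}(x_0)$. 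Apply Lemma~\ref{le306} to the combined family $\{h_1,\dots,h_m,g_1,\dots,g_n\}\subset\operatorname{PKat}(x_0)$ with the test function $f=\sum_i f_i$: since every $x$ in the common peak set lies in each $\operatorname{pk}(g_i)$, we get $\sum_i f_i(x)=\sum_i f_i(x_0)$ there, so the right-hand maximum equals $\sum_i f_i(x_0)$.

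Now I transfer to the image side. Writing $k\sum_l h_l+k\sum_j g_j+\sum_i f_i$ as a long sum of elements of $F(\mathrm{X})$ (with repetitions) and using that $T$ preserves the norm of the arithmetic mean of any finite collection, the norms coincide with their $T$-image counterparts; the constants $\sum\|h_l\|+\sum\|g_j\|$ are also preserved because $T$ preserves norms. Since any $y_0\in\PSupp_T(x_0)$ (nonempty by Lemma~\ref{le302}) is a common peak point of all the $Th_l$ and $Tg_j$, the sum $\sum_l Th_l+\sum_j Tg_j$ attains its norm there, and Lemma~\ref{le301} (applied in $\mathrm{Y}$) identifies the limit as $\max\{\sum_i(Tf_i)(y): y\in\bigcap_l\operatorname{pk}(Th_l)\cap\bigcap_j\operatorname{pk}(Tg_j)\}$. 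Equating the two sides yields some $y^*$ in that intersection with $\sum_i(Tf_i)(y^*)=\sum_i f_i(x_0)$. Because $y^*\in\operatorname{pk}(Tg_i)$, the argument of Lemma~\ref{le303} (which only uses membership in the peak set of a single $Tg_i$ whose $\operatorname{pk}$ sits inside $\{f_i=f_i(x_0)\}$) gives $(Tf_i)(y^*)\leq f_i(x_0)$ for each $i$, forcing equality in every coordinate.

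Finally, as $(h_1,\dots,h_m)$ ranges over finite subfamilies of $\operatorname{PKat}(x_0)$, the sets $\bigcap_l\operatorname{pk}(Th_l)\cap\bigcap_i\{Tf_i=f_i(x_0)\}$ are closed in the compact set $\operatorname{pk}(Th_1)$ and form a decreasing family of nonempty compacta, so their total intersection $\bigcap_{i=1}^n\Supp_T(x_0,f_i)$ is nonempty. Thus $\{\Supp_T(x_0,f)\}_{f\in F(\mathrm{X})}$ has the finite intersection property inside the compact space $\PSupp_T(x_0)$, and $\Supp_T(x_0)$ is nonempty. The main obstacle is the bookkeeping in step~(3): recognizing that ``integer multiples'' $k\cdot g$ can be encoded as repeated sums so that the finite-collection norm-preservation of $T$ is legitimately applicable, and then verifying that the common peak point extracted from the asymptotic expansion can be simultaneously controlled by Lemma~\ref{le303} in each coordinate $f_i$.
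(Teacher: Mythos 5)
Your proposal is correct and follows essentially the same route as the paper: identify $\max\{\sum_i f_i \text{ on the common peak set}\}$ via the asymptotic formula of Lemmas~\ref{le301}/\ref{le306}, transfer it to the image side by encoding integer multiples as repeated sums, use the Lemma~\ref{le303} bound to force coordinatewise equality, and finish with a finite intersection argument inside the compact set $\PSupp_T(x_0)$. The only (harmless) reorganization is that you apply the argument of Lemma~\ref{le303} directly on $\operatorname{pk}(Tg_i)$ at the level of each finite subfamily, whereas the paper first passes to a point of $\PSupp_T(x_0)$ via the finite intersection property and then invokes Lemma~\ref{le303} as stated.
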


\begin{proof}
Fix  any $f_1,f_2,\ldots,f_l\in F(\mathrm{X})$.  By Lemma \ref{lem:F(X)}, there is a  function $g_0\in F(\mathrm{X})$ such that
\[
x_0\in \operatorname{pk}(g_0)\subset \left\{x\in \mathrm{X}: \sum_{i=1}^lf_i(x)=\sum_{i=1}^l f_i(x_0)\right\}.\]
Hence, for any $g_1,g_2,\ldots,g_m\in \operatorname{PKat}(x_0)$, it follows from Lemma \ref{le306} that
\begin{align*}%\label{ineq316}
\sum_{i=1}^lf_i(x_0)
&=\max\left\{\sum_{i=1}^lf_i(x):x\in \bigcap_{j=0}^m \operatorname{pk}(g_j)\right\}\nonumber\\
&=\lim_{n\rightarrow\infty}\left(\left\|n\sum_{j=0}^m g_j+\sum_{i=1}^lf_i\right\|-n\sum_{j=0}^m \|g_j\|\right)\nonumber\\
&=\lim_{n\rightarrow\infty}\left(\left\|n\sum_{j=0}^m Tg_j+\sum_{i=1}^lTf_i\right\|-n\sum_{j=0}^m \|Tg_j\|\right)\nonumber\\
&=\max\left\{\left(\sum_{i=1}^lTf_i\right)(y):y\in \bigcap_{j=0}^m \operatorname{pk}(Tg_j)\right\}.
\end{align*}
As a result, there is a point $y_0\in \bigcap_{j=0}^m \operatorname{pk}(Tg_j)$ such that
\begin{align*}%\label{ineq317}
\sum_{i=1}^l(Tf_i)(y_0)=\sum_{i=1}^lf_i(x_0).
\end{align*}
Consequently,
\begin{align*}%\label{ineq318}
\left(\bigcap_{j=0}^{m}\operatorname{pk}(T g_j)\right)\bigcap\left\{y\in \mathrm{Y}:\sum_{i=1}^l(Tf_i)(y)=\sum_{i=1}^lf_i(x_0)\right\}\neq\emptyset.
\end{align*}
It follows that the family
$$\left\{\operatorname{pk}(T g)\bigcap\left\{y\in \mathrm{Y}:\sum_{i=1}^l(Tf_i)(y)=\sum_{i=1}^lf_i(x_0)\right\}\right\}_{0\neq g\in \operatorname{PKat}(x_0)}$$
of compact subsets of $\mathrm{Y}$ has the finite intersection property.
Therefore,
\begin{align*}%\label{ineq320}
\bigcap_{g\in \operatorname{PKat}(x_0)}\left(\operatorname{pk}(T g)\bigcap\left\{y\in \mathrm{Y}:\sum_{i=1}^l(Tf_i)(y)=\sum_{i=1}^lf_i(x_0)\right\}\right)\neq\emptyset.
\end{align*}
In other words,
\begin{align*}%\label{ineq321}
\PSupp_T(x_0)\bigcap\left\{y\in \mathrm{Y}:\sum_{i=1}^l(Tf_i)(y)=\sum_{i=1}^lf_i(x_0)\right\}\neq\emptyset.
\end{align*}
This means that there exists a point $y_0\in \PSupp_T(x_0)$ such that $\sum_{i=1}^l(Tf_i)(y_0)=\sum_{i=1}^lf_i(x_0)$.
Moreover, Lemma \ref{le303} implies that $(Tf_i)(y_0)\leq f_i(x_0)$, $i=1,...,l$ and hence we deduce
$$
(Tf_i)(y_0)= f_i(x_0), \quad i=1,2,\ldots,l.
$$

Recall that for any $f\in F(\mathrm{X})$, we have defined
$$
\Supp_T(x_0, f)=\PSupp_T(x_0)\cap\{y\in \mathrm{Y}: Tf(y)=f(x_0)\}.
$$
We have just proved that the family $\{\Supp_T(x_0, f)\}_{f\in F(\mathrm{X})}$ of compact subsets of $\mathrm{Y}$ has the finite intersection
property.  It follows  that
\[\Supp_T(x_0)= \bigcap_{f\in F(\mathrm{X})}\Supp_T(x_0, f)\neq\emptyset.\]
\end{proof}

Lemma \ref{le305} shows that $\Supp_T(x)\neq \emptyset$ for all $x\in \mathrm{X}$.
In view of \eqref{eq:yinsupp},
we can now prove the following statement which is the part of Theorem \ref{thm:p-means} relating the arithmetic mean.

\begin{theorem}\label{th301}
Let $\mathrm{X},\mathrm{Y}$ be locally compact Hausdorff spaces.
Let $F(\mathrm{X})$ be a subset of $C_0(\mathrm{X})_{+}$ containing sufficiently many functions to peak on  compact $G_\delta$ subsets of $\mathrm{X}$.
Let  the map $T: F(\mathrm{X}) \rightarrow C_{0}(\mathrm{Y})_+$ preserve the norm of the arithmetic mean of any finitely many elements of $F(\mathrm{X})$. Then there is
a locally compact subset $\mathrm{Y}_0$ of $\mathrm{Y}$ and a surjective continuous map $\tau: \mathrm{Y}_0\rightarrow \mathrm{X}$ such that
\begin{align}\label{ineq322}
(Tf)(y)= f(\tau (y)), \quad y\in \mathrm{Y}_0, f\in F(\mathrm{X}).
\end{align}
In particular, $T$ is injective. In the case when $\mathrm{X}$ is compact, $\mathrm{Y}_0$ is a closed subset of $\mathrm{Y}$.

If the range $T(F(\mathrm{X}))$ of $T$ also contains sufficiently many functions to peak on  compact $G_\delta$ subsets of $\mathrm{Y}$, then $\mathrm{Y}_0=\mathrm{Y}$ and $\tau$ is a homeomorphism from $\mathrm{Y}$ onto $\mathrm{X}$.
\end{theorem}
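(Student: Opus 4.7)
The plan is to construct $\tau$ directly from the compact-valued support assignments built in Lemmas \ref{le304} and \ref{le305}. Set $\mathrm{Y}_0 = \bigcup_{x\in\mathrm{X}}\Supp_T(x)$ and define $\tau: \mathrm{Y}_0\to\mathrm{X}$ by $\tau(y)=x$ when $y\in\Supp_T(x)$: by Lemma \ref{le304} the sets $\Supp_T(x)\subseteq\PSupp_T(x)$ are pairwise disjoint, so $\tau$ is well-defined, and by Lemma \ref{le305} it is surjective. The defining identity \eqref{eq:yinsupp} is precisely $(Tf)(y)=f(\tau(y))$ for all $f\in F(\mathrm{X})$ and $y\in\mathrm{Y}_0$. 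Injectivity of $T$ is immediate: $Tf_1=Tf_2$ would give $f_1\circ\tau=f_2\circ\tau$, and surjectivity of $\tau$ forces $f_1=f_2$ on $\mathrm{X}$.

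Continuity of $\tau$ follows directly from Lemma \ref{lem:F(X)}(c): if $y_\mu\to y_0$ in $\mathrm{Y}_0$, then $f(\tau(y_\mu))=(Tf)(y_\mu)\to (Tf)(y_0)=f(\tau(y_0))$ for every $f\in F(\mathrm{X})$, which forces $\tau(y_\mu)\to \tau(y_0)$ in $\mathrm{X}\cup\{\infty\}$, hence in $\mathrm{X}$ since $\tau(y_0)\in\mathrm{X}$. For local compactness of $\mathrm{Y}_0$, introduce the closed set $Z=\{y\in\mathrm{Y}: (Tf)(y)=0 \text{ for all } f\in F(\mathrm{X})\}$ and its open complement $W=\mathrm{Y}\setminus Z$. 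Lemma \ref{lem:F(X)}(a) yields $\mathrm{Y}_0\subseteq W$, and $\mathrm{Y}_0$ is relatively closed in $W$: for a net $y_\mu\in\mathrm{Y}_0$ with $y_\mu\to y_0\in W$, compactness of $\mathrm{X}\cup\{\infty\}$ provides a subnet along which $\tau(y_\mu)\to x$; the case $x=\infty$ would force $(Tf)(y_0)=0$ for all $f$, contradicting $y_0\in W$, so $x\in\mathrm{X}$ and $y_0\in\mathrm{Y}_0$ with $\tau(y_0)=x$. Thus $\mathrm{Y}_0$ is a closed subset of the open, locally compact set $W$, hence locally compact. When $\mathrm{X}$ itself is compact, the infinity case cannot arise and the same argument yields $\mathrm{Y}_0$ closed in $\mathrm{Y}$.

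For the final assertion I would bootstrap by applying the theorem to $T^{-1}: T(F(\mathrm{X}))\to C_0(\mathrm{X})_+$, which inherits the norm-preserving property for arithmetic means of arbitrary finite families, and whose domain already has sufficiently many peak functions by assumption. The first part produces a locally compact $\mathrm{X}_0\subseteq\mathrm{X}$ and a continuous surjection $\sigma:\mathrm{X}_0\to\mathrm{Y}$ satisfying $f(x)=(Tf)(\sigma(x))$ for $x\in\mathrm{X}_0$, $f\in F(\mathrm{X})$. Comparing the two formulas forces $\sigma(x)\in\mathrm{Y}_0$ and $\tau(\sigma(x))=x$ on $\mathrm{X}_0$. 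Surjectivity of $\sigma$ onto $\mathrm{Y}$ then gives $\mathrm{Y}_0=\mathrm{Y}$, and surjectivity of $\tau$ combined with $\tau\circ\sigma=\mathrm{id}_{\mathrm{X}_0}$ gives $\mathrm{X}_0=\mathrm{X}$. Finally, $\tau$ is injective because $T(F(\mathrm{X}))$ separates the points of $\mathrm{Y}$ (Lemma \ref{lem:F(X)}(c) applied on $\mathrm{Y}$), so $\tau$ is a continuous bijection whose inverse $\sigma$ is continuous, i.e., a homeomorphism.

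The main obstacle I anticipate is the local compactness argument in the general locally compact case, since $\mathrm{Y}_0$ is assembled as a disjoint union of compact pieces $\Supp_T(x)$ and is a priori neither open nor closed in $\mathrm{Y}$. Relocating it as a relatively closed subset of the open set $W$ where some $(Tf)(y)$ is nonzero is the essential topological step, and it relies crucially on having enough peak functions in $F(\mathrm{X})$ to exclude the possibility that $\tau$ \emph{escapes to infinity}.
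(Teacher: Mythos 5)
Your proposal is correct and follows the same architecture as the paper's proof: $\mathrm{Y}_0=\bigcup_x\Supp_T(x)$, $\tau$ defined via membership in the pairwise disjoint sets $\Supp_T(x)$ (Lemmas \ref{le304}, \ref{le305}), the identity \eqref{eq:yinsupp} giving the composition formula, continuity from Lemma \ref{lem:F(X)}(c), and the homeomorphism part by bootstrapping to $T^{-1}$. The one place where you genuinely diverge is the local compactness of $\mathrm{Y}_0$, and your version is cleaner: the paper works pointwise, fixing $y_1\in\mathrm{Y}_0$, a nonzero $f_1\in\operatorname{PKat}(\tau(y_1))$, and the open set $\tau^{-1}(\{f_1>\|f_1\|/2\})$, then running a two-subnet argument to show a suitable $K\cap\mathrm{Y}_0$ is a compact neighborhood; you instead exhibit $\mathrm{Y}_0$ globally as a relatively closed subset of the open set $W=\mathrm{Y}\setminus\{y:(Tf)(y)=0\ \text{for all}\ f\in F(\mathrm{X})\}$, with the single net argument ruling out escape of $\tau(y_\mu)$ to $\infty$ because $y_0\in W$. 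Both arguments hinge on exactly the same phenomenon (a limit point of $\mathrm{Y}_0$ fails to lie in $\mathrm{Y}_0$ only if $\tau$ drifts to infinity, which is excluded by a nonvanishing $Tf$ at the limit), but your formulation packages it as ``closed in an open subset of a locally compact space,'' which also makes the compact-$\mathrm{X}$ case (closedness of $\mathrm{Y}_0$ in all of $\mathrm{Y}$) an immediate corollary rather than a separate verification. The remaining steps (injectivity of $T$, $\tau\circ\sigma=\mathrm{id}$, $\sigma\circ\tau=\mathrm{id}$, point separation by peak functions) match the paper's proof.
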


\begin{proof}
It follows from  Lemma $\ref{le304}$ that $\Supp_T(x_1)\bigcap \Supp_T(x_2)=\emptyset$ when $x_1,x_2\in \mathrm{X}$ with $x_1\neq x_2$.
Consider the disjoint union
$$
\mathrm{Y}_0=\bigcup_{x\in \mathrm{X}}\Supp_T(x)
$$
of compact sets.
Define $\tau:\mathrm{Y}_0\rightarrow \mathrm{X}$ by
$$
\tau(y)=x\quad\text{if}\quad y\in \Supp_T(x).
$$
Clearly, $\tau$ is a well-defined and surjective map satisfying (\ref{ineq322}).

We show that $\tau$ is continuous.
Pick any net $\{y_\mu\}$ in $\mathrm{Y}_0$ converging to some point $y_0\in \mathrm{Y}_0$. Denote $x_\mu=\tau(y_\mu)$ and $x_0=\tau(y_0)$.
We have $f(x_\mu)=(Tf)(y_\mu)\to (Tf)(y_0)=f(x_0)$ for every $f\in F(\mathrm{X})$.
By Lemma \ref{lem:F(X)}, we have $x_\mu \to x_0$ justifying the continuity of $\tau$.

We next show that the subset $\mathrm{Y}_0$ is locally compact. For each point $y_1$ in $\mathrm{Y}_0$ and any open
neighborhood $U_1$ of $y_1$ in $\mathrm{Y}_0$, it needs to find a compact neighborhood $K_1$
of $y_1$ in $\mathrm{Y}_0$ such that $y_1\in K_1\subset U_1$.
Let $x_1=\tau(y_1)$. Then
$Tf( y_1)=f(x_1)$ holds for all $f\in F(\mathrm{X})$. Fix any nonzero function $f_1$ in $\operatorname{PKat}(x_1)$.
We have that $V_1=\tau^{-1}(\{x\in \mathrm{X}: f_1(x)>\|f_1\|/2\})\cap U_1$ is an open neighbourhood of $y_1$ in $\mathrm{Y}_0$ and it is contained in $U_1$.
Let $V_1=W\cap \mathrm{Y}_0$ for some open
neighborhood $W$ of $y_1$ in $\mathrm{Y}$.
There exists a compact neighborhood $K$ of $y_1$
in $\mathrm{Y}$ such that $y_1\in K\subset W$.
We are going to verify that the set $K_1=K\cap \mathrm{Y}_0$ is a compact neighborhood of $y_1$ in $\mathrm{Y}_0$.
Let $\{y_\mu\}$ be a net in $K_1\subset V_1$. By
passing to a subnet, we can assume that $\{y_\mu\}$ converges to $\widetilde{y}$ in $K$ and we will show that $ \widetilde{y}\in \mathrm{Y}_0$. Let $x_\mu=\tau(y_\mu)$ for any $\mu$. Since $\mathrm{X}\cup\{\infty\}$ is compact, by
passing to a subnet again, we can assume that $\{x_\mu\}$ converges to some $\widetilde{x}$ in $\mathrm{X}$ or $x_\mu\to \infty$. If $x_\mu\to \widetilde{x}$ in $\mathrm{X}$, we have that
\[
(Tf)( \widetilde{y})=\lim Tf(y_\mu) =\lim  f(\tau(y_\mu))=\lim f(x_\mu)=f(\widetilde{x})
\]
holds for all $f$ in $F(\mathrm{X})$. Hence $\widetilde{y}\in \Supp_T(\widetilde{x})$ and thus $\widetilde{y}\in \mathrm{Y}_0$. If $x_\mu\to\infty$, one can derive that
\[
(Tf_1)(\widetilde{y})=\lim Tf_1(y_\mu)=\lim  f_1(\tau(y_\mu))=\lim f_1(x_\mu)=0.
\]
However, the fact that $y_\mu\in V_1$ ensures  $ Tf_1(y_\mu)=f_1(x_\mu)>\|f_1\|/2$ for all $\mu$, and hence $Tf(\widetilde{y})\geq \|f_1\|/2$.
This is a contradiction and we obtain that $\widetilde{y}\in \mathrm{Y}_0$. Therefore, we conclude that $K_1$ is a compact neighborhood of $y_1$ in $\mathrm{Y}_0$ which is contained in $V_1\subset U_1$. This verifies that $\mathrm{Y}_0$ is  locally compact.

We verify the closedness of the
set $\mathrm{Y}_0$ when $\mathrm{X}$ is compact.  Assume that $y'$ is the limit of a net $\{y_\lambda\}$ in $\mathrm{Y}_0$.  Since $\mathrm{X}$ is compact, the net $\{\tau(y_\lambda)\}$
has a cluster point $x'$ in $\mathrm{X}$.  By the continuity of both $Tf$ and $f$, it follows from \eqref{ineq322} that $(Tf)(y') = f(x')$. It plainly follows that $y'\in \Supp_T(x')$ and hence we have $y'\in \mathrm{Y}_0$.

Finally, denoting the range $T(F(\mathrm{X}))$ of $T$ by $F(\mathrm{Y})$, we assume that it contains sufficiently many functions to peak on  compact $G_\delta$ subsets of $\mathrm{Y}$.
We apply the arguments above to the inverse map $T^{-1}: F(\mathrm{Y})\to F(\mathrm{X})$.  Then there is a locally compact subset $\mathrm{X}_0$ of $\mathrm{X}$ and a continuous surjective map
$\sigma: \mathrm{X}_0 \to \mathrm{Y}$ such that
$$
(T^{-1}g)(x) = g(\sigma(x)),\quad g\in F(\mathrm{Y}), x\in \mathrm{X}_0.
$$
Write $f=T^{-1}g\in F(\mathrm{X})$ and rewrite the above formula as
$$
(Tf)(\sigma(x)) = f(x),\quad f\in F(\mathrm{X}), \sigma(x)\in \mathrm{Y}.
$$
This shows that $\sigma(x)\in \Supp_T(x)$ for every $x\in \mathrm{X}_0$ which, by the
surjectivity of $\sigma$, implies that $\mathrm{Y}_0=\mathrm{Y}$. For any $x\in \mathrm{X}_0$ we also obtain that $f(\tau(\sigma(x))=f(x)$ holds for all $f\in F(\mathrm{X})$.
It follows from  Lemma \ref{lem:F(X)} that $F(\mathrm{X})$ separates the points of $\mathrm{X}$.
Hence we deduce $\tau(\sigma(x))=x$ for all $x \in \mathrm{X}_0$.
Similarly, $\mathrm{X}_0=\mathrm{X}$ and $\sigma(\tau(y))=y$  for all $y\in \mathrm{Y}_0$.
We infer that $\tau$ is a homeomorphism from $\mathrm{Y}$ onto $\mathrm{X}$, as asserted.
\end{proof}

By the previous theorem, if we assume that the locally compact Hausdorff spaces $\mathrm{X},\mathrm{Y}$ are $\sigma$-compact,
then any map $T: C_{0}(\mathrm{X})_{++}\rightarrow C_{0}(\mathrm{Y})_{++}$ which is finitely norm-additive or, equivalently, preserves the norm of the arithmetic mean of any finite collection of elements in $C_{0}(\mathrm{X})_{++}$,
 is a generalized composition operator. However,
 $T$ is not necessarily a composition operator.  Indeed,
  it can really happen that $\mathrm{Y}_0$ is a proper subset of $\mathrm{Y}$. In what follows we present an easy example. In fact, what is more, we can also have that $T$ cannot be extended to a norm-additive map between the underlying full function spaces.

\begin{example}\label{eg:not-full}
Let $\mathrm{X}$ be a single point space.  We identify $C(\mathrm{X})_{++}=\, (0,\infty)$.
For $t\in \, (0,1]$, define $Tt\in C([0,1])_{++}$ by $(Tt)(y)=t(1-y/2)$ for all $y\in [0,1]$, and if $t\in \, (1,\infty)$, define $Tt\in C([0,1])_{++}$ by $(Tt)(y)= t$ for all $y\in [0,1]$.
In this way we have a map $T:\, (0,\infty)\,\longrightarrow C([0,1])_{++}$. Clearly, $T$ is finitely norm-additive. We prove that $T$ cannot be extended to a norm-additive map between the full spaces $\mathbb C$ and $C([0,1])$. Indeed, if it were extendable, then it would be an isometry. But this is not the case since $\|T2-T1\|\geq |(T2-T1)(1)|=3/2>1=\|2-1\|$.

Observe that the generalized composition operator form of $T$ given in Theorem \ref{th301} is the following
$$
(Tt)(0)= t, \quad  t\in  C(\mathrm{X})_{++},
$$
where the `good part' $\mathrm{Y}_0$ of $\mathrm{Y}$ is the singleton $\{0\}$, and the continuous surjective map $\tau: \mathrm{Y}_0\to \mathrm{X}$ sends
the unique point $0$ in $\mathrm{Y}_0$ to the single element of $\mathrm{X}$.
\end{example}

In the following example, we see  a finitely norm-additive transformation
$T: C_0(\mathrm{X})_+\to C_0(\mathrm{Y})_+$, for which its generalized composition operator form is defined on a nonclosed subset $\mathrm{Y}_0$ of $\mathrm{Y}$.

\begin{example}[cf. Jeang and Wong {\cite[Example 5]{JW96}}]\label{eg:AM-Y0notclosed}
Let $\mathrm{X}=[0,\infty)$ and $\mathrm{Y}=[-\infty,\infty]$.
Let $T$ be a linear isometry from $C_{0}(\mathrm{X})$ into $C(\mathrm{Y})$ defined
 by
$$
(Tf)(y)=\left\{
\begin{array}{ll}
f(y), & 0 \leq y < \infty,\\
{e^{y}}f(-y), & -\infty < y \leq 0,\\
0, & y = -\infty \text{\ or\ }\infty.
\end{array}\right.
$$
Clearly, $T$ induces a finitely norm-additive map from $C_0(\mathrm{X})_+$ into $C(\mathrm{Y})_+$.
In the notation of Proposition \ref{th102},
$\mathrm{Y}_{0}=[0,\infty)$ is not closed (and not open) in $\mathrm{Y}$.  In
this case,
$\tau(y)=y$ for all $y$ in $\mathrm{Y}_0$, and
$\mathrm{X}$ and $\mathrm{Y}_0$ are homeomorphic.
Note that the range of $T$ does not peak on the compact $G_\delta$ subset $[-1,1]$ of $\mathrm{Y}$, for instance.
\end{example}

\iffalse
Define
$$
\mathcal{C}_T(x)=\bigcap_{f\in C_0(\mathrm{X})_{+,1}} \Supp_T(x_1, f),
$$
and $\mathrm{Y}_1=\bigcup_{x\in \mathrm{X}}\mathcal{C}_T(x)$.
By applying the above argument for $C_0(\mathrm{X})_{+,1}$ and $C_0(\mathrm{Y})_{+,1}$ instead of $C_{0}(\mathrm{X})_+$ and $C_{0}(\mathrm{Y})_+$, respectively, we have the following result.

\begin{theorem}\label{th302}
Let  $T:C_0(\mathrm{X})_{+,1}\rightarrow C_0(\mathrm{Y})_{+,1}$ preserve norms of arithmetic means. Then there is a nonempty locally compact subset $\mathrm{Y}_1$ of $\mathrm{Y}$ and a surjective continuous map $\tau: \mathrm{Y}_1\rightarrow \mathrm{X}$ such that
\begin{align}\label{ineq325}(Tf)(y)= f(\tau (y))\end{align}
for all $f\in C_0(\mathrm{X})_{+,1}$ and $y\in \mathrm{Y}_1$.
\end{theorem}

\begin{remark}
It is worth to note that many people started to study the isometries between unit spheres of positive elements on operator algebras (see \cite{MN12, MT03, N18, P18}).
\end{remark}
\fi

\subsection{Preservers of the norm of the geometric mean}

In this subsection we verify the assertion in Theorem \ref{thm:p-means} concerning the geometric mean.
We  follow essentially the same line
of reasoning as in the previous subsection.

Again, in what follows let $\mathrm{X},\mathrm{Y}$ be locally compact Hausdorff spaces, $F(\mathrm{X})\subset C_{0}(\mathrm{X})_+$ be a subset which contains sufficiently many functions to peak on  compact $G_\delta$ subsets of $\mathrm{X}$.
Assume that $T: F(\mathrm{X})\rightarrow C_{0}(\mathrm{Y})_+$ is a (not necessarily additive, positive homogeneous, or surjective) map preserving the norm of the geometric mean of any finitely many elements, i.e.,
$$
\|(Tf_1 Tf_2 \cdots Tf_n)^{1/n}\|= \|(f_1 f_2 \cdots f_n)^{1/n}\|, \quad f_1, f_2, \ldots, f_n\in F(\mathrm{X}), n\in\mathbb{N},
$$
or, equivalently, $T$ preserves the norm of finite products,
$$
\|Tf_1 Tf_2 \cdots Tf_n\|= \|f_1 f_2 \cdots f_n\|,\quad f_1, f_2, \ldots, f_n\in F(\mathrm{X}), n\in\mathbb{N}.
$$
In particular, notice that $T$ preserves the norm, i.e., $\|Tf\|=\|f\|$ holds for all $f\in F(\mathrm{X})$.
We are going to show that $T$ is a generalized composition operator. As already mentioned, the proof will be carried out in a way very similar to what was done in the previous subsection. We keep the notation given there.

We begin with the following observation.

\begin{lemma}\label{leG02}
For any $x\in \mathrm{X}$,  the set
$
\PSupp_T(x)
$
is compact and nonempty.
\end{lemma}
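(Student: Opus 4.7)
My plan is to adapt the argument of Lemma \ref{le302} for the arithmetic mean, replacing sums by products throughout. I would fix $x \in \mathrm{X}$ and, using Lemma \ref{lem:F(X)}(b), pick any nonzero functions $h_1, \ldots, h_n$ in $\operatorname{PKat}(x)$; the existence of a nonzero peaking function at $x$ is the only place where the structural assumption on $F(\mathrm{X})$ enters.

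The key computation is that the pointwise product $h_1 h_2 \cdots h_n$ also peaks at $x$, hence
\[
\|h_1 h_2 \cdots h_n\| = h_1(x) \cdots h_n(x) = \|h_1\| \cdots \|h_n\|.
\]
Using the hypothesis that $T$ preserves the norm of finite products together with the fact, noted at the start of the subsection, that $T$ preserves the norm of individual elements, one obtains
\[
\|Th_1 \cdot Th_2 \cdots Th_n\| = \|Th_1\| \cdot \|Th_2\| \cdots \|Th_n\|.
\]
Since each $\|Th_i\| = \|h_i\| > 0$, the product $Th_1 \cdots Th_n$ is a nonzero element of $C_0(\mathrm{Y})_+$, so its supremum is attained at some $y_1 \in \mathrm{Y}$. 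From $(Th_i)(y_1) \leq \|Th_i\|$ for each $i$ and the fact that the product of these nonnegative reals equals the product of their strictly positive upper bounds, it follows that $(Th_i)(y_1) = \|Th_i\|$ for every $i$, i.e., $y_1 \in \bigcap_{i=1}^n \operatorname{pk}(Th_i)$.

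To finish, I would observe that the family $\{\operatorname{pk}(Th) : 0\neq h\in \operatorname{PKat}(x)\}$ of closed subsets of $\mathrm{Y}$ now has the finite intersection property, and that for any nonzero $h_0 \in \operatorname{PKat}(x)$ the set $\operatorname{pk}(Th_0)$ is actually compact, since $Th_0 \in C_0(\mathrm{Y})$ forces $\{y : (Th_0)(y) \geq \|Th_0\|/2\}$ to be compact, and this compact set contains the closed subset $\operatorname{pk}(Th_0)$. Consequently $\PSupp_T(x) = \bigcap_{h \in \operatorname{PKat}(x)} \operatorname{pk}(Th)$ is a nonempty compact subset of $\mathrm{Y}$ (the zero element of $\operatorname{PKat}(x)$, if any, contributes the whole $\mathrm{Y}$ and does not affect the intersection). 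The only delicate point, compared with the arithmetic case, is that equality of products forces equality of factors only when all factors are strictly positive; this is precisely what is ensured by restricting to nonzero peaking functions and invoking the norm-preservation of $T$.
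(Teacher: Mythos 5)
Your proposal is correct and follows essentially the same route as the paper's proof: products of peaking functions, norm preservation of finite products, the observation that equality of a product of nonnegative reals with the product of their strictly positive upper bounds forces equality factor by factor, and then the finite intersection property of the compact peak sets. The extra details you supply (attainment of the supremum for a nonzero element of $C_0(\mathrm{Y})_+$, compactness of peak sets, and the harmless contribution of a possible zero peaking function) are points the paper leaves implicit but do not change the argument.
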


\begin{proof}
Choose nonzero functions $h_1,h_2,\ldots,h_n\in \operatorname{PKat}(x)$. Observe that
\begin{align}\label{ineqG07}
\|T h_1 T h_2 \cdots T h_n\|&=\|h_1 h_2 \cdots h_n\| = h_1(x)h_2(x)\cdots h_n(x) \notag \\
&= \|h_1\|\,\|h_2\|\, \cdots\, \|h_n\|.
\end{align}
Since $T$ is norm preserving, it follows from (\ref{ineqG07})  that there exists a point $y_1\in \mathrm{Y}$ such that
$$
(T h_1)(y_1) (T h_2)(y_1)\cdots (T h_n)(y_1) = \|Th_1\|\,  \|Th_2\|\,  \cdots\, \|Th_n\|.
$$
This forces that
$(T h_i)(y_1)=\|Th_i\|$ holds for all $1\leq i\leq n$. Consequently,
\begin{align*}%\label{ineq308}
\bigcap_{i=1}^{n}\operatorname{pk}(T h_i)\neq\emptyset.
\end{align*}
Hence, the family $\{\operatorname{pk}(T h)\}_{0\neq h\in \operatorname{PKat}(x)}$ of compact subsets of $\mathrm{Y}$ has the finite intersection property.
It follows that the compact set
\begin{align*}%\label{ineq309}
\PSupp_T(x)=\bigcap_{h\in \operatorname{PKat}(x)}\operatorname{pk}(T h)\neq\emptyset.
\end{align*}
\end{proof}

Arguing as in the proof of  Lemma \ref{le304}, but with products replacing sums, one can easily verify the following.

\begin{lemma}\label{leG04}
If $x_1,x_2\in \mathrm{X}$ with $x_1\neq x_2$, then we have $$\PSupp_T(x_1)\cap \PSupp_T(x_2)=\emptyset.$$
\end{lemma}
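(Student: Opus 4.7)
The plan is to mirror the proof of Lemma \ref{le304} verbatim, replacing sums by products throughout. First, using Lemma \ref{lem:F(X)}(a), I would pick $g_1 \in F(\mathrm{X})$ with $g_1 \in \operatorname{PKat}(x_1)$ and $g_1(x_2) < \|g_1\|$, and then apply the same lemma with $F = \operatorname{pk}(g_1)$ (a compact, hence closed, subset of $\mathrm{X}\cup\{\infty\}$ that excludes $x_2$) to produce $g_2 \in F(\mathrm{X})$ with $g_2 \in \operatorname{PKat}(x_2)$ and $\operatorname{pk}(g_2)\cap \operatorname{pk}(g_1) = \emptyset$.

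The multiplicative analogue of the triangle-inequality step used in Lemma \ref{le304} is the strict inequality $\|g_1 g_2\| < \|g_1\|\,\|g_2\|$. Since $g_1 g_2 \in C_0(\mathrm{X})_+$, either it is identically zero (in which case the strict inequality is trivial, as both $\|g_i\| > 0$) or it attains its supremum at some point $x_0 \in \mathrm{X}$. An equality $\|g_1 g_2\| = \|g_1\|\,\|g_2\|$ together with the bounds $g_i(x_0) \le \|g_i\|$ would then force $x_0 \in \operatorname{pk}(g_1) \cap \operatorname{pk}(g_2)$, contradicting the construction of $g_2$.

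Next, suppose toward a contradiction that some $y \in \PSupp_T(x_1) \cap \PSupp_T(x_2)$ exists. By \eqref{eq:yinPsupp} together with the norm-preservation of $T$, we have $(Tg_1)(y) = \|Tg_1\| = \|g_1\|$ and $(Tg_2)(y) = \|Tg_2\| = \|g_2\|$, so
\[
\|Tg_1 \cdot Tg_2\| \;\ge\; (Tg_1)(y)\,(Tg_2)(y) \;=\; \|g_1\|\,\|g_2\|.
\]
On the other hand, $T$ preserves the norm of the geometric mean of any two elements, which is equivalent to $\|Tg_1 \cdot Tg_2\| = \|g_1 g_2\|$. Combined with the strict bound of the previous paragraph, this yields $\|g_1\|\,\|g_2\| \le \|g_1 g_2\| < \|g_1\|\,\|g_2\|$, a contradiction.

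I do not anticipate any real obstacle here; the only point requiring a little care relative to the arithmetic case is that one must invoke $g_1 g_2 \in C_0(\mathrm{X})$ to ensure that the supremum $\|g_1 g_2\|$ is actually attained on $\mathrm{X}$, which is what allows the strict multiplicative inequality to be extracted from the pointwise information.
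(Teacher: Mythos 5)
Your proof is correct and follows essentially the same route the paper intends: the paper only remarks that one should argue as in Lemma \ref{le304} with products replacing sums, and that is exactly what you do (choose $g_1$ peaking at $x_1$ with $\operatorname{pk}(g_1)\not\ni x_2$, then $g_2$ peaking at $x_2$ with $\operatorname{pk}(g_2)\cap\operatorname{pk}(g_1)=\emptyset$, and derive $\|g_1\|\,\|g_2\|\le\|Tg_1\,Tg_2\|=\|g_1g_2\|<\|g_1\|\,\|g_2\|$). Your explicit remark that $g_1g_2\in C_0(\mathrm{X})_+$ attains its supremum, which is what turns the pointwise strict inequality into $\|g_1g_2\|<\|g_1\|\,\|g_2\|$, is a worthwhile detail that the paper leaves implicit.
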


\iffalse
\begin{proof}
By assumption
there exists a $g_1\in F(\mathrm{X})$ such that $g_1(x_1)=\|g_1\|$ and $g_1(x_2)<\|g_1\|$.
Let $g_2\in F(\mathrm{X})$ such that $g_2(x_2)=\|g_2\|$ and $g_2(x)<\|g_2\|$ on $\operatorname{pk}(g_1)$.
Clearly, $g_1\in \operatorname{PKat}(x_1)$ and $g_2\in \operatorname{PKat}(x_2)$.

Assume $y_2\in \PSupp_T(x_1)\cap \PSupp_T(x_2)$. Then, $Tg_1, Tg_2\in \operatorname{PKat}(y_2)$, and hence
\begin{align*}%\label{ineq314}
\|Tg_1 Tg_2\|\geq(Tg_1 Tg_2)(y_2)=\|Tg_1\|  \|Tg_2\| = \|g_1\|   \|g_2\|.
\end{align*}
On the other hand, $\|Tg_1 Tg_2\|=\|g_1 g_2\|<\|g_1\| \|g_2\|$, which is a contradiction.
\end{proof}
\fi

We continue with the multiplicative analogue of Lemma \ref{le301}.

\begin{lemma}\label{leG01} For any $g_0, f\in C_{0}(\mathrm{X})_+$ with $g_0\neq 0$, we have
\begin{align}\label{ineqG02}
\lim_{n\rightarrow\infty} \frac{\|g_0^nf\|}{\|g_0\|^n}=\max \left\{f(x):x\in \operatorname{pk}(g_0)\right\}.
\end{align}
\end{lemma}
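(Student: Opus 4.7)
The plan is to mirror the strategy of Lemma \ref{le301}, replacing sums by products and working on a multiplicative scale. Set $M=\|g_0\|>0$ (positive since $g_0\neq 0$) and $C=\max\{f(x):x\in\operatorname{pk}(g_0)\}$; this maximum exists because $\operatorname{pk}(g_0)$ is a nonempty compact subset of $\mathrm{X}$.

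First I would dispatch the easy lower bound $\|g_0^n f\|/M^n\geq C$ by evaluating $g_0^n f$ at any peak point of $g_0$ at which $f$ attains $C$; this shows the quotient is at least $C$ for every $n$. For the reverse inequality, since $g_0^n f\in C_0(\mathrm{X})_+$ I would pick $x_n\in\mathrm{X}$ attaining $\|g_0^n f\|=g_0(x_n)^n f(x_n)$, and then exploit the two-sided squeeze
\[
M^n C \;\leq\; g_0(x_n)^n f(x_n) \;\leq\; M^n f(x_n).
\]
The right inequality immediately yields $\|g_0^n f\|/M^n\leq f(x_n)$, so the task reduces to proving $\limsup_n f(x_n)\leq C$.

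To see this, when $C>0$ the left inequality rearranges to $g_0(x_n)\geq M(C/\|f\|)^{1/n}$, which forces $g_0(x_n)\to M$. Hence any cluster point $z$ of $\{x_n\}$ in the one-point compactification $\mathrm{X}\cup\{\infty\}$ satisfies $g_0(z)=M$; because $g_0$ vanishes at infinity, $z\neq\infty$, so $z\in\operatorname{pk}(g_0)$ and $f(z)\leq C$. Passing to a subsequence realizing $\limsup f(x_n)$ and extracting a further convergent sub-subsequence in the compactification then gives $\limsup f(x_n)\leq C$, closing the squeeze.

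The only subtlety I anticipate is the degenerate case $C=0$, where the lower-bound rearrangement collapses. In that case I would argue contrapositively: if $\limsup \|g_0^n f\|/M^n=L>0$, I would run the analysis along a subsequence $n_k$ realizing this $\limsup$, using $L$ in place of $C$ in the lower estimate to force $g_0(x_{n_k})\to M$. The same cluster-point argument then produces $z\in\operatorname{pk}(g_0)$ with $f(z)\geq L>0=C$, contradicting the definition of $C$. I expect this minor case split to be the main obstacle; everything else is a direct multiplicative translation of Lemma \ref{le301}.
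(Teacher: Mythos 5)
Your argument is correct, and for the main case $C>0$ it is essentially the paper's proof: choose maximizers $x_n$ of $g_0^nf$, use the lower bound $\|g_0^nf\|\geq\|g_0\|^nC$ to force $g_0(x_n)\to\|g_0\|$, locate cluster points inside $\operatorname{pk}(g_0)$, and squeeze via $\|g_0^nf\|/\|g_0\|^n\leq f(x_n)$. Where you diverge is the degenerate case $C=0$: the paper argues directly, taking an open set $V\supset\operatorname{pk}(g_0)$ on which $f<\epsilon$, noting $s:=\sup_{\mathrm{X}\setminus V}g_0<\|g_0\|$, and bounding $\|g_0^nf\|/\|g_0\|^n\leq\max\{\epsilon,(s/\|g_0\|)^n\|f\|\}$, which is $\epsilon$ for large $n$. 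Your contrapositive version (assume $\limsup=L>0$, rerun the cluster-point argument along a subsequence realizing it, and produce $z\in\operatorname{pk}(g_0)$ with $f(z)\geq L$, contradicting $C=0$) is equally valid and has the virtue of reusing the same machinery as the main case, at the cost of being less quantitative than the paper's explicit two-term estimate. One small technical caveat: since $\mathrm{X}\cup\{\infty\}$ is compact but not in general sequentially compact, you should not speak of extracting a \emph{convergent sub-subsequence}; work instead with a cluster point $z$ of the (sub)sequence, as the paper does, and use that continuity of $f$ and $g_0$ (extended by $0$ at $\infty$) makes $f(z)$ and $g_0(z)$ cluster points of the corresponding scalar sequences, which pins down $g_0(z)=\|g_0\|$ and the required inequality for $f(z)$. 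With that rephrasing the proof is complete.
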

\begin{proof}
Let
$$
C=\max \left\{f(x):x\in \operatorname{pk}(g_0)\right\}.
$$
For any $n\in \mathbb{N}$, there exists $x_n\in \mathrm{X}$ such that
\begin{align}
\label{ineqG04}
g_0(x_n)^n f(x_n)=\|g_0^n f\|.
\end{align}
Assume first that
$f$ does not vanish on the peak set of $g_0$, i.e.,
$C>0$.
Select $x'\in \mathrm{X}$ such that $g_0(x')=\|g_0\|$ and $f(x') > 0$. Then we have
$$
g_0(x_n)^n f(x_n) = \|g_0^n f\| \geq g_0(x')^n f(x') = \|g_0\|^n f(x')>0, \quad n\in\mathbb{N}.
$$
It follows that
\begin{eqnarray*}
\|g_0\| &\geq& g_0(x_n) = \left(\frac{\|g_0^n f\|)}{f(x_n)}\right)^{1/n} \geq \left(\frac{g_0(x')^n f(x'))}{\|f\|}\right)^{1/n}\\
&=& \|g_0\|\left(\frac{f(x'))}{\|f\|}\right)^{1/n},
  \quad n\in\mathbb{N}.
\end{eqnarray*}
Consequently, we have
\begin{align*}%\label{ineq303}
\lim_{n\rightarrow\infty}g_0(x_n)=\|g_0\|.
\end{align*}
This implies that every cluster point $z$ of the sequence $\{x_n\}$ (in the compact space $\mathrm{X}\cup\{\infty\}$) belongs to $\operatorname{pk}(g_0)$.
We further claim that
$f(z)= C$.
Indeed, from (\ref{ineqG04})  we deduce that
\begin{align}\label{eq:GfnK}
f(x_n) = \frac{\|g_0^n f\|}{g_0(x_n)^n} \geq  \frac{\|g_0^n f\|}{\|g_0\|^n} \geq
\sup_{x\in\operatorname{pk}(g_0)} \frac{\|g_0\|^n f(x)}{\|g_0\|^n}=C.
\end{align}
Hence, $f(x_n)\geq C$ for all $n\in \mathbb{N}$.  Using the fact that $z\in \operatorname{pk}(g_0)$, we infer that $f(z)= C$.
Since this is true for any cluster point $z$ of $\{x_n\}$, one can deduce that the bounded scalar sequence  $\{f(x_n)\}$ converges to $C$.
Letting $n\rightarrow\infty$ in \eqref{eq:GfnK}, we get (\ref{ineqG02}).

Finally, we consider the case $C=0$.  It amounts to say  that $f$ vanishes on $\operatorname{pk}(g_0)$.
Pick any positive $\epsilon$. There is an open set $V$ in $\mathrm{X}$ containing $\operatorname{pk}(g_0)$ such that $f(x)<\epsilon$ holds for all $x\in V$. It is easy to see that the supremum of $g$ on the complement of $V$ is strictly less than $\|g_0\|$. Denote this supremum by $s$. We have $g_0^n(x)f(x)\leq \max \{\epsilon \|g_0\|^n, s^n\|f\|\}$ for every $x\in \mathrm{X}$ and $n\in \mathbb N$. It follows that
\[
\frac{\|g_0^n f\|}{\|g_0\|^n} \leq \max\{\epsilon, (s/\|g_0\|)^n \|f\|\}
\]
and the number on the right hand side equals $\epsilon$ for large enough $n$. This proves that
\[
\lim_{n\to \infty} \frac{\|g_0^nf\|}{\|g_0\|^n} =0,
\]
which completes the proof of the lemma.
\end{proof}

\begin{lemma}\label{leG06}
Let $x_0\in \mathrm{X}$ and let $g_1,g_2,\ldots,g_m\in \operatorname{PKat}(x_0)$ be all nonzero functions. Then for any $f\in C_{0}(\mathrm{X})_+$ we have
\begin{align}\label{ineqG401}
\lim_{n\rightarrow\infty}\frac{\left\|\left(\prod_{j=1}^m g_j\right)^n f\right\|}{\prod_{j=1}^m \left\|g_j\right\|}=\max\left\{f(x): x\in \bigcap_{j=1}^m \operatorname{pk}(g_j)\right\}.
\end{align}
\end{lemma}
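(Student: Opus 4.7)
The statement should read with denominator $\bigl(\prod_{j=1}^m \|g_j\|\bigr)^n$ on the left-hand side, matching the additive analogue in Lemma \ref{le306} and the single-function version in Lemma \ref{leG01}; I will work under that reading.

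My plan is to reduce the $m$-factor statement to the one-factor Lemma \ref{leG01} by setting $g_0 := \prod_{j=1}^m g_j$. Then the right-hand side of \eqref{ineqG02} becomes $\max\{f(x) : x \in \operatorname{pk}(g_0)\}$ and the left-hand side becomes $\lim_n \|g_0^n f\|/\|g_0\|^n$. So the entire lemma boils down to two elementary identities: $\|g_0\| = \prod_{j=1}^m \|g_j\|$ and $\operatorname{pk}(g_0) = \bigcap_{j=1}^m \operatorname{pk}(g_j)$.

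For the norm identity, since each $g_j$ is a nonnegative function, one has $\|g_0\| = \|\prod_j g_j\| \le \prod_j \|g_j\|$. Conversely, because every $g_j$ lies in $\operatorname{PKat}(x_0)$, we have $g_j(x_0) = \|g_j\|$ for each $j$, hence $g_0(x_0) = \prod_j \|g_j\|$, giving the reverse inequality. This argument simultaneously shows $g_0(x_0) = \|g_0\|$, so in particular $g_0$ is nonzero and Lemma \ref{leG01} applies. For the peak set identity, the inclusion $\bigcap_j \operatorname{pk}(g_j) \subseteq \operatorname{pk}(g_0)$ is immediate from the computation just made (any common peak point produces the value $\prod_j \|g_j\| = \|g_0\|$). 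For the reverse inclusion, take $x \in \operatorname{pk}(g_0)$, so $\prod_j g_j(x) = \prod_j \|g_j\|$; combined with the pointwise inequalities $g_j(x) \le \|g_j\|$ for every $j$, this forces $g_j(x) = \|g_j\|$ for each $j$, so $x \in \bigcap_j \operatorname{pk}(g_j)$.

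Plugging these two identities into Lemma \ref{leG01} with $g_0 = \prod_j g_j$ yields
\[
\lim_{n\to\infty} \frac{\bigl\|\bigl(\prod_{j=1}^m g_j\bigr)^n f\bigr\|}{\bigl(\prod_{j=1}^m \|g_j\|\bigr)^n}
= \max\Bigl\{f(x) : x\in \bigcap_{j=1}^m \operatorname{pk}(g_j)\Bigr\},
\]
which is \eqref{ineqG401}. There is no real obstacle: the only thing that requires any care is the observation that $g_0(x_0) > 0$, guaranteeing both that the peak-set identity is nonvacuous and that the hypothesis $g_0 \neq 0$ of Lemma \ref{leG01} is satisfied; this is automatic from $g_j(x_0) = \|g_j\| > 0$ for each of the (nonzero) $g_j$.
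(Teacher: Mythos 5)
Your proof is correct and follows essentially the same route as the paper: both reduce to Lemma \ref{leG01} by setting $g_0=\prod_{j=1}^m g_j$ and using that $\|g_0\|=\prod_j\|g_j\|$ and $\operatorname{pk}(g_0)=\bigcap_j\operatorname{pk}(g_j)$, facts the paper merely asserts and you verify in detail. Your reading of the denominator as $\bigl(\prod_{j=1}^m\|g_j\|\bigr)^n$ is the right correction of an obvious typo (present also in the paper's own displayed limit).
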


\begin{proof}
Since $\prod_{j=1}^m g_j\in \operatorname{PKat}(x_0)$, we have
$$
\left\|\prod_{j=1}^m g_j\right\| = \prod_{j=1}^m \left\|g_j\right\|\quad\text{and}\quad
\operatorname{pk}\left(\prod_{j=1}^m g_j\right)=\bigcap_{j=1}^m \operatorname{pk}(g_j).
$$
On the other hand, it follows from Lemma \ref{leG01} that
\begin{align*}%\label{ineqG402}
\lim_{n\rightarrow\infty}\frac{\left\|\left(\prod_{j=1}^m g_j\right)^n f\right\|}{\left\|\prod_{j=1}^m g_j\right\|}
=\max\left\{f(z): z\in \operatorname{pk}\left(\prod_{j=1}^m g_j\right)\right\}.
\end{align*}
Now (\ref{ineqG401}) follows.
\end{proof}

\begin{lemma}\label{leG03}
Let $x_0\in \mathrm{X}$. Then
$$
(Tf)(y)\leq f(x_0), \quad f\in F(\mathrm{X}), y\in \PSupp_T(x_0).
$$
\end{lemma}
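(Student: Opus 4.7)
The plan is to mimic the additive argument of Lemma \ref{le303}, replacing sums with products and using Lemma \ref{leG01} (the multiplicative analog of Lemma \ref{le301}) as the main technical tool. First I would invoke Lemma \ref{lem:F(X)}(b) to select a function $g_0 \in F(\mathrm{X})$ whose peak set pins $f$ down to the value $f(x_0)$; more precisely, with $x_0 \in \operatorname{pk}(g_0) \subset \{x \in \mathrm{X} : f(x) = f(x_0)\}$. In particular $g_0$ is nonzero and lies in $\operatorname{PKat}(x_0)$, so by the very definition \eqref{E:Mol} of $\PSupp_T(x_0)$ any $y \in \PSupp_T(x_0)$ satisfies $y \in \operatorname{pk}(Tg_0)$.

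Next I would apply Lemma \ref{leG01} on the $\mathrm{X}$-side to the pair $(g_0,f)$, yielding
\[
\lim_{n \to \infty} \frac{\|g_0^n f\|}{\|g_0\|^n} \;=\; \max\{f(x) : x \in \operatorname{pk}(g_0)\} \;=\; f(x_0).
\]
Since $T$ preserves the norm of products of finitely many elements (taking $n$ copies of $g_0$ together with one copy of $f$), and also preserves the norm of single elements, I have $\|g_0^n f\| = \|(Tg_0)^n (Tf)\|$ and $\|g_0\|^n = \|Tg_0\|^n$ for every positive integer $n$. Applying Lemma \ref{leG01} a second time, this time on the $\mathrm{Y}$-side to the pair $(Tg_0, Tf)$, I then obtain
\[
f(x_0) \;=\; \lim_{n \to \infty} \frac{\|(Tg_0)^n (Tf)\|}{\|Tg_0\|^n} \;=\; \max\{(Tf)(y) : y \in \operatorname{pk}(Tg_0)\}.
\]
For any $y \in \PSupp_T(x_0) \subset \operatorname{pk}(Tg_0)$ this immediately gives $(Tf)(y) \leq f(x_0)$, which is the desired inequality.

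I do not anticipate a serious obstacle here: all the real work has already been absorbed into Lemma \ref{leG01}, and the translation from the additive to the multiplicative setting goes through provided one is careful that $g_0$ is nonzero (so that $\|Tg_0\| = \|g_0\| > 0$ and the two limits are legitimate). The only mildly delicate point is the selection of $g_0$ when $f(x_0) = 0$, but Lemma \ref{lem:F(X)}(b) already handles that case, so the argument is uniform.
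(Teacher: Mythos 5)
Your argument is correct and is essentially identical to the paper's proof: choose $g_0\in F(\mathrm{X})$ via Lemma \ref{lem:F(X)}(b) with $x_0\in\operatorname{pk}(g_0)\subset\{x: f(x)=f(x_0)\}$, apply Lemma \ref{leG01} on both sides using the preservation of norms of finite products, and conclude from $\PSupp_T(x_0)\subset\operatorname{pk}(Tg_0)$. Your remark that $g_0$ (hence $Tg_0$) must be nonzero for Lemma \ref{leG01} to apply matches the paper's explicit choice of a nonzero $g_0$.
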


\begin{proof}
Fix any $f\in F(\mathrm{X})$.
It follows from Lemma \ref{lem:F(X)} that there is a nonzero function
$g_0\in F(\mathrm{X})$ such that
$$
x_0\in\operatorname{pk}(g_0)\subset \{x\in \mathrm{X}: f(x)=f(x_0)\}.
$$
Lemma \ref{leG01} implies that
\begin{align*}%\label{ineq312}
f(x_0)&=\max\{f(x):x\in \operatorname{pk}(g_0)\}\nonumber\\
&=\lim_{n\rightarrow\infty}\frac{\|g_0^n f\|}{\|g_0\|^n}\nonumber\\
&=\lim_{n\rightarrow\infty}\frac{\|(Tg_0)^n Tf\|}{\|Tg_0\|^n}\nonumber\\
&=\max\{(Tf)(y):y\in \operatorname{pk}(Tg_0)\}.
\end{align*}
Since $g_0\in \operatorname{PKat}(x_0)$, we have $\PSupp_T(x_0)\subset \operatorname{pk}(Tg_0)$.
The assertion follows.
\end{proof}

\begin{lemma}\label{leG05}
The set $\Supp_T(x_0)$ is compact and nonempty for any $x_0\in \mathrm{X}$.
\end{lemma}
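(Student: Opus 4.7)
The plan is to mirror the proof of Lemma \ref{le305}, replacing sums by products throughout and using the multiplicative analogues (Lemmas \ref{leG01}, \ref{leG03} and \ref{leG06}) in place of their additive counterparts. Concretely, I will establish the finite intersection property for the family $\{\Supp_T(x_0,f)\}_{f\in F(\mathrm{X})}$ of closed subsets of $\PSupp_T(x_0)$; since $\PSupp_T(x_0)$ is compact and nonempty by Lemma \ref{leG02}, this will immediately give that $\Supp_T(x_0)$ is compact and nonempty.

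Fix $f_1,\ldots,f_l\in F(\mathrm{X})$. If $f_i(x_0)=0$ for some $i$, Lemma \ref{leG03} forces $(Tf_i)(y)=0=f_i(x_0)$ on all of $\PSupp_T(x_0)$, so $\Supp_T(x_0,f_i)=\PSupp_T(x_0)$ and such indices can be discarded at the outset. Hence I assume $f_i(x_0)>0$ for every $i$. Then $K=\bigcap_{i=1}^l\{x\in\mathrm{X}:f_i(x)=f_i(x_0)\}$ is a nonempty compact $G_\delta$ subset of $\mathrm{X}$ containing $x_0$, and by the standing hypothesis on $F(\mathrm{X})$ there is $g_0\in F(\mathrm{X})$ with $\operatorname{pk}(g_0)=K$; in particular $g_0\in\operatorname{PKat}(x_0)$.

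The heart of the argument is: for arbitrary nonzero $g_1,\ldots,g_m\in\operatorname{PKat}(x_0)$, I produce $y_0\in\bigcap_{j=0}^m\operatorname{pk}(Tg_j)$ with $\prod_{i=1}^l(Tf_i)(y_0)=\prod_{i=1}^l f_i(x_0)$. Applying Lemma \ref{leG06} to $g_0,g_1,\ldots,g_m$ and the single function $\prod_{i=1}^l f_i\in C_0(\mathrm{X})_+$, and noting that $\operatorname{pk}(g_0)\subseteq K$ forces the maximum of $\prod_i f_i$ on $\bigcap_{j=0}^m\operatorname{pk}(g_j)$ to equal $\prod_i f_i(x_0)$, gives
$$
\prod_{i=1}^l f_i(x_0)=\lim_{n\to\infty}\frac{\bigl\|\bigl(\prod_{j=0}^m g_j\bigr)^{\!n}\prod_{i=1}^l f_i\bigr\|}{\bigl(\prod_{j=0}^m\|g_j\|\bigr)^{\!n}}.
$$
Since $T$ preserves the norm of any finite product, both numerator and denominator are unchanged after replacing each $g_j,f_i$ by $Tg_j,Tf_i$; in particular $\bigl\|\prod_j Tg_j\bigr\|=\prod_j\|Tg_j\|$, so the $Tg_j$ share a common peak and $\operatorname{pk}\bigl(\prod_j Tg_j\bigr)=\bigcap_j\operatorname{pk}(Tg_j)$. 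Lemma \ref{leG01}, now applied in $C_0(\mathrm{Y})_+$ to $\prod_j Tg_j$ and $\prod_i Tf_i$, identifies the limit with $\max\bigl\{\prod_i(Tf_i)(y):y\in\bigcap_{j=0}^m\operatorname{pk}(Tg_j)\bigr\}$, and the desired $y_0$ follows.

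Consequently, the family of compact sets
$$
\Bigl\{\operatorname{pk}(Tg)\cap\bigl\{y\in\mathrm{Y}:\prod_{i=1}^l(Tf_i)(y)=\prod_{i=1}^l f_i(x_0)\bigr\}\Bigr\}_{g\in\operatorname{PKat}(x_0)}
$$
has the finite intersection property, so its total intersection is nonempty, producing some $y_0\in\PSupp_T(x_0)$ with $\prod_i(Tf_i)(y_0)=\prod_i f_i(x_0)$. Lemma \ref{leG03} gives the pointwise upper bounds $(Tf_i)(y_0)\leq f_i(x_0)$; matching positive products forces each inequality to be an equality, placing $y_0\in\bigcap_{i=1}^l\Supp_T(x_0,f_i)$. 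This verifies the finite intersection property for $\{\Supp_T(x_0,f)\}_{f\in F(\mathrm{X})}$ inside the compact set $\PSupp_T(x_0)$, and compactness of $\Supp_T(x_0)$ is then automatic as an intersection of closed subsets of a compact set. The main delicate point is the bookkeeping of zero values: unlike the additive case, a single $f_i(x_0)=0$ would collapse the product identity, which is why those indices must be dealt with separately before entering the multiplicative machinery.
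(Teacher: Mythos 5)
Your proof is correct and follows essentially the same route as the paper's: reduce to indices with $f_i(x_0)\neq 0$ via Lemma \ref{leG03}, pick a peaking function $g_0$ on which $\prod_i f_i$ is constantly $\prod_i f_i(x_0)$, apply Lemmas \ref{leG06} and \ref{leG01} together with preservation of norms of products to produce the common point $y_0$, and conclude by the finite intersection property inside the compact set $\PSupp_T(x_0)$. The only (immaterial) difference is that you take $g_0$ peaking exactly on $\bigcap_i\{x: f_i(x)=f_i(x_0)\}$ rather than choosing it via Lemma \ref{lem:F(X)}(b) inside the level set of $\prod_i f_i$.
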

\begin{proof}
Let $x_0\in \mathrm{X}$.
Let us
fix  any $f_1,f_2,\ldots,f_l\in F(\mathrm{X})$ with $\prod_{i=1}^l f_i(x_0)\neq 0$.
By Lemma \ref{lem:F(X)}, we can choose a nonzero function $g_0\in F(\mathrm{X})$ such that
\[x_0\in \operatorname{pk}(g_0)\subset \left\{x\in \mathrm{X}: \prod_{i=1}^lf_i(x)=\prod_{i=1}^l f_i(x_0)\right\}.\]
Hence, for any nonzero $g_1,g_2,\ldots,g_m\in \operatorname{PKat}(x_0)$, it follows from Lemma \ref{leG06} that
\begin{align*}%\label{ineq316}
\prod_{i=1}^lf_i(x_0)
&=\max\left\{\prod_{i=1}^lf_i(x):x\in \bigcap_{j=0}^m \operatorname{pk}(g_j)\right\}
\nonumber\\
&=\lim_{n\rightarrow\infty}\frac{\left\|\left(\prod_{j=0}^m g_j\right)^n \prod_{i=1}^lf_i\right\|}
{\left(\prod_{j=0}^m \|g_j\|\right)^n}\nonumber\\
&=\lim_{n\rightarrow\infty}\frac{\left\|\left(\prod_{j=0}^m Tg_j\right)^n \prod_{i=1}^l Tf_i\right\|}
{\left(\prod_{j=0}^m \|Tg_j\|\right)^n}\nonumber\\
&=\max\left\{\left(\prod_{i=1}^lTf_i\right)(y):y\in \bigcap_{j=0}^m \operatorname{pk}(Tg_j)\right\}.
\end{align*}
As a result, there is a $y_0\in \bigcap_{j=0}^m \operatorname{pk}(Tg_j)$ such that
\begin{align*}%\label{ineq317}
\prod_{i=1}^l(Tf_i)(y_0)=\prod_{i=1}^lf_i(x_0).
\end{align*}
Consequently, we have
\begin{align*}%\label{ineq318}
\left(\bigcap_{j=0}^{m}\operatorname{pk}(T g_j)\right)\bigcap\left\{y\in \mathrm{Y}:\prod_{i=1}^l(Tf_i)(y)=\prod_{i=1}^lf_i(x_0)\right\}\neq\emptyset.
\end{align*}
Therefore, the family
$$\left\{\operatorname{pk}(T g)\bigcap\left\{y\in \mathrm{Y}:\prod_{i=1}^l(Tf_i)(y)=\prod_{i=1}^lf_i(x_0)\right\}\right\}_{0\neq g\in \operatorname{PKat}(x_0)}$$
of compact subsets of $\mathrm{Y}$ has the finite intersection property.
It follows that
\begin{align*}%\label{ineq320}
\bigcap_{g\in \operatorname{PKat}(x_0)}\left(\operatorname{pk}(T g)\bigcap\left\{y\in \mathrm{Y}:\prod_{i=1}^l(Tf_i)(y)=\prod_{i=1}^lf_i(x_0)\right\}\right)\neq\emptyset,
\end{align*}
that is,
\begin{align*}%\label{ineq321}
\PSupp_T(x_0)\bigcap\left\{y\in \mathrm{Y}:\prod_{i=1}^l(Tf_i)(y)=\prod_{i=1}^lf_i(x_0)\right\}\neq\emptyset.
\end{align*}
This means that there exists a point $y_0\in \PSupp_T(x_0)$ such that $\prod_{i=1}^l(Tf_i)(y_0)=\prod_{i=1}^lf_i(x_0)$.
Moreover, Lemma \ref{leG03} implies that $(Tf_i)(y_0)\leq f_i(x_0)$, and hence we infer
$$
(Tf_i)(y_0)= f_i(x_0), \quad i=1,2,\ldots,l.
$$
It follows that the family of the compact sets
$$
\Supp_T(x_0, f)=\PSupp_T(x_0)\cap\{y\in \mathrm{Y}: Tf(y)=f(x_0)\}, \; f\in F(\mathrm{X})\,\text{with}\,f(x_0)\neq 0,
$$
has the finite intersection
property.  Therefore,
\[ \bigcap_{f\in F(\mathrm{X}),\, f(x_0)\neq 0}\Supp_T(x_0, f)\neq\emptyset.\]

Finally, we  note that by Lemma \ref{leG03}, $f(x_0)=0$ implies $Tf(y)=0$ for all $y\in \PSupp_T(x_0)$.
Hence,
$$
\Supp_T(x_0)=\bigcap_{f\in F(\mathrm{X})}\Supp_T(x_0, f)=\bigcap_{f\in F(\mathrm{X}),\, f(x_0)\neq 0}\Supp_T(x_0, f)\neq\emptyset.
$$
\end{proof}

We have seen that $\Supp_T(x)\neq \emptyset$ holds for all $x\in \mathrm{X}$.
In view of \eqref{eq:yinsupp},
we can formulate the following statement which is the part of Theorem \ref{thm:p-means} relating the geometric mean.

\begin{theorem}\label{thG01}
Let $\mathrm{X},\mathrm{Y}$ be locally compact Hausdorff spaces and
let $F(\mathrm{X})$ be a subset of $C_0(\mathrm{X})_{+}$ containing sufficiently many functions to peak on  compact $G_\delta$ subsets of $\mathrm{X}$.
Let  $T: F(\mathrm{X}) \rightarrow C_{0}(\mathrm{Y})_+$ be a map which preserves the norm of the geometric mean of any finite collections of elements in $F(\mathrm{X})$. Then there is a locally compact subset $\mathrm{Y}_0$ of $\mathrm{Y}$ and a surjective continuous map $\tau: \mathrm{Y}_0\rightarrow \mathrm{X}$ such that
\begin{align*}%label{ineqG322}
(Tf)(y)= f(\tau (y)), \quad y\in \mathrm{Y}_0, f\in F(\mathrm{X}).
\end{align*}
In particular, $T$ is injective. In the case when $\mathrm{X}$ is compact, $\mathrm{Y}_0$ is a closed subset of $\mathrm{Y}$.

If the range $T(F(\mathrm{X}))$ of $T$ also contains sufficiently many functions to peak on  compact $G_\delta$ subsets of $\mathrm{Y}$, then we have $\mathrm{Y}_0=\mathrm{Y}$ and $\tau$ is a homeomorphism from $\mathrm{Y}$ onto $\mathrm{X}$.
\end{theorem}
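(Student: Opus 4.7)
The plan is to mimic verbatim the proof of Theorem \ref{th301}, since the necessary structural lemmas for the geometric mean (Lemmas \ref{leG02}, \ref{leG04}, \ref{leG03}, \ref{leG05}) have already been established in exact parallel to the ones for the arithmetic mean. Accordingly, I will first invoke Lemmas \ref{leG04} and \ref{leG05} to conclude that, for every $x\in\mathrm{X}$, the set $\Supp_T(x)$ is nonempty, and that $\Supp_T(x_1)\cap\Supp_T(x_2)=\emptyset$ whenever $x_1\neq x_2$ (the latter uses $\Supp_T(x)\subseteq\PSupp_T(x)$ together with Lemma \ref{leG04}). Then I set
\[
\mathrm{Y}_0=\bigsqcup_{x\in\mathrm{X}}\Supp_T(x)\subseteq\mathrm{Y}
\]
and define $\tau:\mathrm{Y}_0\to\mathrm{X}$ by $\tau(y)=x$ if $y\in\Supp_T(x)$. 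By the very definition of $\Supp_T(x)$, formula \eqref{eq:yinsupp} gives $(Tf)(y)=f(\tau(y))$ for all $f\in F(\mathrm{X})$ and $y\in\mathrm{Y}_0$.

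Next, I would establish continuity of $\tau$ via Lemma \ref{lem:F(X)}(c): if $y_\mu\to y_0$ in $\mathrm{Y}_0$, then $f(\tau(y_\mu))=(Tf)(y_\mu)\to(Tf)(y_0)=f(\tau(y_0))$ for every $f\in F(\mathrm{X})$, which forces $\tau(y_\mu)\to\tau(y_0)$. For local compactness of $\mathrm{Y}_0$ I would copy the argument from the proof of Theorem \ref{th301}: given $y_1\in\mathrm{Y}_0$ with $x_1=\tau(y_1)$ and an open neighbourhood $U_1$ of $y_1$ in $\mathrm{Y}_0$, pick a nonzero $f_1\in\operatorname{PKat}(x_1)$, take a compact neighbourhood $K$ of $y_1$ in $\mathrm{Y}$ lying in some open $W\subseteq\mathrm{Y}$ with $W\cap\mathrm{Y}_0\subseteq\tau^{-1}(\{x:f_1(x)>\|f_1\|/2\})\cap U_1$, and show $K\cap\mathrm{Y}_0$ is the desired compact neighbourhood. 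The verification that any limit point stays in $\mathrm{Y}_0$ uses passage to subnets of $\{\tau(y_\mu)\}$ in $\mathrm{X}\cup\{\infty\}$: the case $x_\mu\to\widetilde{x}\in\mathrm{X}$ gives $\widetilde{y}\in\Supp_T(\widetilde{x})$ via $(Tf)(\widetilde{y})=\lim f(x_\mu)=f(\widetilde{x})$, while $x_\mu\to\infty$ is ruled out by $(Tf_1)(y_\mu)=f_1(x_\mu)>\|f_1\|/2$. For compact $\mathrm{X}$, closedness of $\mathrm{Y}_0$ is then automatic from the compactness of $\mathrm{X}$: a limit point of a net in $\mathrm{Y}_0$ supplies, by passing to a subnet of the $\tau(y_\lambda)$, a cluster point $x'\in\mathrm{X}$, and continuity of all $Tf$ and $f$ gives $(Tf)(y')=f(x')$, so $y'\in\Supp_T(x')\subseteq\mathrm{Y}_0$.

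Injectivity of $T$ is immediate: if $Tf=Tg$ then $f(\tau(y))=g(\tau(y))$ for all $y\in\mathrm{Y}_0$, and since $\tau(\mathrm{Y}_0)=\mathrm{X}$ we conclude $f=g$ on $\mathrm{X}$. For the last assertion, suppose $T(F(\mathrm{X}))$ also peaks on compact $G_\delta$ sets of $\mathrm{Y}$. Then $T$ is a bijection from $F(\mathrm{X})$ onto $T(F(\mathrm{X}))=:F(\mathrm{Y})$, and $T^{-1}:F(\mathrm{Y})\to F(\mathrm{X})$ also preserves the norm of the geometric mean of any finite collection (since the identity $\|g_1\cdots g_n\|=\|T^{-1}g_1\cdots T^{-1}g_n\|$ follows by re-reading the hypothesis on $T$). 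Applying everything already proved to $T^{-1}$, one obtains a locally compact $\mathrm{X}_0\subseteq\mathrm{X}$ and a continuous surjection $\sigma:\mathrm{X}_0\to\mathrm{Y}$ with $(T^{-1}g)(x)=g(\sigma(x))$, equivalently $(Tf)(\sigma(x))=f(x)$ for $f\in F(\mathrm{X})$, $x\in\mathrm{X}_0$. Hence $\sigma(x)\in\Supp_T(x)$ for all $x\in\mathrm{X}_0$, which forces $\mathrm{Y}_0=\mathrm{Y}$; the relations $f(\tau(\sigma(x)))=f(x)$ and $f(\sigma(\tau(y)))=f(y)$ combined with the point-separating property of $F(\mathrm{X})$ and $F(\mathrm{Y})$ (Lemma \ref{lem:F(X)}) yield that $\tau$ and $\sigma$ are mutually inverse homeomorphisms.

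I do not foresee a genuine obstacle: the algebraic shift from sums to products has already been absorbed into Lemmas \ref{leG01}--\ref{leG06}, so the rest is a direct transcription of the argument for Theorem \ref{th301}. The only place requiring mild care is the local compactness step, where one must keep in mind that the auxiliary peaking function $f_1$ need only be nonzero at $x_1$, which is available because $F(\mathrm{X})$ peaks on compact $G_\delta$ sets (in particular on singletons).
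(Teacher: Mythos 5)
Your proposal is correct and coincides with the paper's own proof, which literally consists of the single sentence that one should follow the argument of Theorem \ref{th301}, the geometric-mean analogues of the required lemmas (Lemmas \ref{leG02}, \ref{leG04}, \ref{leG03}, \ref{leG05}) having already been established. The only cosmetic slip is the closing parenthetical that $F(\mathrm{X})$ peaks ``in particular on singletons'' --- singletons need not be $G_\delta$ in a general locally compact Hausdorff space --- but a nonzero function in $\operatorname{PKat}(x_1)$ is supplied directly by Lemma \ref{lem:F(X)}(a), so nothing is affected.
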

\begin{proof}
One can simply follow the argument given in the proof of Theorem \ref{th301}.
\end{proof}

In the following, modifying Example \ref{eg:2notn},
we see that a map preserving the norm of the geometric mean of any pair of strictly positive continuous functions might
not be a generalized composition operator.

\begin{example}\label{eg:2GM-not-CO}
Let $\mathrm{X}$ be a single point space, and identify the cone $C(\mathrm{X})_{++}$ of all strictly
positive functions on $\mathrm{X}$ with the interval $(0,\infty)$.
Let $A_1=\,(1,2]$, $A_2=\, (2,3]$ and $A_3=\, (3,\infty)$.
Define $\psi_i:\, (0,\infty)\,\longrightarrow\, (\!-\infty,\infty)$ by
\begin{align}\psi_i(t) =\left\{\begin{array}{ll}
\dfrac{\log t}{2},&\,\,\,\,  t\in A_i,\\
\log t,&\,\,\,\, t\notin A_i,\end{array} \quad i=1,2,3.
\right. \nonumber\end{align}
We construct a map $S:\, (0,\infty)\,\longrightarrow C([0,1])_{++}$ by
 \begin{align}(St)(y)=\left\{\begin{array}{ll}
\exp[(1-2y)\psi_1(t)+2y\psi_2(t)],&\,\,\,\,  y\in[0,1/2],\\
\exp[(2-2y)\psi_2(t)+(2y-1)\psi_3(t)],&\,\,\,\, y\in \, (1/2,1],\end{array}\right.\nonumber\end{align}
for all $t>1$, and $(St)(y)= t$ for all $0\leq y\leq 1$ and $0<t\leq 1$.
Since $\exp$ is  strictly increasing, $S$ preserves the norm of the geometric mean of any pair
of strictly positive functions.  It is plain that $S$ is not a generalized composition operator.
\end{example}

Also observe that the map in Example \ref{eg:AM-Y0notclosed} preserves the norm of
the geometric mean of any finite collection of elements,
but its generalized composition operator form is defined on a nonclosed subset $\mathrm{Y}_0$ of $\mathrm{Y}$.

\subsection{Preservers of the norm of the harmonic mean}

In this subsection
we verify the assertion in Theorem \ref{thm:p-means} concerning the harmonic mean. Again, the basic steps of the proof follow the pattern given in the previous two subsections.
However, we assume here that $\mathrm{X},\mathrm{Y}$ are both compact Hausdorff spaces and $F(\mathrm{X})$ is a subset of $C(\mathrm{X})_+^{-1}$ containing
sufficiently many functions to peak on  compact $G_\delta$ subsets of $\mathrm{X}$.
Let $T: F(\mathrm{X})\rightarrow C(\mathrm{Y})_+^{-1}$ be a (not necessarily additive, positive homogeneous, or surjective)
map which preserves the norm of the harmonic mean of any finite collection of elements in $F(\mathrm{X})$.  In other words, we assume that
$$
\left\|\left(\dfrac{(Tf_1)^{-1}+ (Tf_2)^{-1} +\cdots +  (Tf_n)^{-1}}{n}\right)^{-1}\right\|
= \left\|\left(\dfrac{f_1^{-1}+ f_2^{-1} +\cdots +  f_n^{-1}}{n}\right)^{-1}\right\|,
$$
or, equivalently,
$$
\|((Tf_1)^{-1}+ (Tf_2)^{-1} +\cdots +  (Tf_n)^{-1})^{-1}\|=
\|(f_1^{-1}+ f_2^{-1} +\cdots +  f_n^{-1})^{-1}\|
$$
holds for all $f_1, f_2, \ldots, f_n\in F(\mathrm{X})$ and $n\in\mathbb{N}$.
Here, $f^{-1}=1/f$ is the reciprocal function, i.e., the multiplicative inverse of the strictly positive function $f$.
Observe again that $T$ necessarily preserves the norm, that is, $\|Tf\|=\|f\|$ holds for all $f\in F(\mathrm{X})$.

\begin{lemma}\label{leH02}
For any $x\in \mathrm{X}$,  the set
$
\PSupp_T(x)
$
is compact and nonempty.
\end{lemma}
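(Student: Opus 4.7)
The plan is to follow the pattern of Lemmas \ref{le302} and \ref{leG02} by showing that the family $\{\operatorname{pk}(Th) : h\in \operatorname{PKat}(x),\, h\neq 0\}$ of compact subsets of $\mathrm{Y}$ has the finite intersection property; the conclusion then follows from compactness of $\mathrm{Y}$. So fix $x\in \mathrm{X}$ and pick any $h_1,\ldots,h_n\in \operatorname{PKat}(x)$.

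The key observation is that for functions in $F(\mathrm{X})\subset C(\mathrm{X})_+^{-1}$ peaking simultaneously at $x$, the harmonic mean norm is easy to compute. Since $h_i(x)=\|h_i\|$, we have $h_i^{-1}(z)\geq 1/\|h_i\|$ for every $z\in \mathrm{X}$, with equality at $z=x$. Summing gives
\[
\sum_{i=1}^n h_i^{-1}(z) \geq \sum_{i=1}^n \frac{1}{\|h_i\|}, \qquad z\in \mathrm{X},
\]
with equality at $z=x$. Therefore
\[
\bigl\|\bigl(h_1^{-1}+\cdots+h_n^{-1}\bigr)^{-1}\bigr\| = \left(\sum_{i=1}^n \frac{1}{\|h_i\|}\right)^{\!-1}.
\]
Using the norm-of-harmonic-mean preservation property of $T$ together with $\|Th_i\|=\|h_i\|$, we obtain
\[
\bigl\|\bigl((Th_1)^{-1}+\cdots+(Th_n)^{-1}\bigr)^{-1}\bigr\| = \left(\sum_{i=1}^n \frac{1}{\|Th_i\|}\right)^{\!-1}.
\]

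By compactness of $\mathrm{Y}$, there is some $y_1\in \mathrm{Y}$ at which this supremum is attained, i.e.,
\[
\sum_{i=1}^n (Th_i)^{-1}(y_1) = \sum_{i=1}^n \frac{1}{\|Th_i\|}.
\]
But each summand satisfies $(Th_i)^{-1}(y_1)\geq 1/\|Th_i\|$, so equality in the sum forces equality term by term: $(Th_i)(y_1)=\|Th_i\|$ for all $i$, i.e., $y_1\in \bigcap_{i=1}^n \operatorname{pk}(Th_i)$. This establishes the finite intersection property, and intersecting the whole family of compact sets yields a nonempty compact set $\PSupp_T(x)$.

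I do not expect any serious obstacle here: once one recognizes that the peak point of a harmonic mean of functions with a common peak is again that point (and computes the common value), the argument runs in perfect parallel with the additive and multiplicative cases already handled.
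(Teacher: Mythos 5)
Your proof is correct and follows essentially the same route as the paper: compute the norm of the harmonic mean of functions peaking simultaneously at $x$, use the preservation property and compactness of $\mathrm{Y}$ to find a point $y_1$ where the infimum of $\sum_i (Th_i)^{-1}$ is attained, force term-by-term equality, and conclude via the finite intersection property. No gaps.
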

\begin{proof}
Choose $h_1,h_2,\ldots,h_n\in \operatorname{PKat}(x)$. Observe that
\begin{align}\label{ineqH07}
&\|((Th_1)^{-1}+ (Th_2)^{-1} +\cdots +  (Th_n)^{-1})^{-1}\| =
\|(h_1^{-1}+ h_2^{-1} +\cdots +  h_n^{-1})^{-1}\| \nonumber\\
=\quad &\frac{1}{\frac{1}{h_1(x)} + \frac{1}{h_2(x)} +\cdots + \frac{1}{h_n(x)}}
= \left(\|(h_1\|^{-1}+ \|h_2\|^{-1} +\cdots +  \|h_n^{-1}\|\right)^{-1}.
\end{align}
It follows from (\ref{ineqH07}) and from the norm preserving property of $T$ that there exists a point $y_1\in \mathrm{Y}$ with
\begin{align*}
&\ \frac{1}{\frac{1}{(Th_1)(y_1)} + \frac{1}{(Th_2)(y_1)} +\cdots + \frac{1}{(Th_n)(y_1)}}\\
=&\ \|((Th_1)^{-1}+ (Th_2)^{-1} +\cdots +  (Th_n)^{-1})^{-1}\| \nonumber\\
=&\  \left(\|(Th_1\|^{-1}+ \|Th_2\|^{-1} +\cdots +  \|Th_n^{-1}\|\right)^{-1}.
\end{align*}
This forces that
$(T h_i)(y_1)=\|Th_i\|$ holds for all $1\leq i\leq n$. Consequently,
\begin{align*}%\label{ineq308}
\bigcap_{i=1}^{n}\operatorname{pk}(T h_i)\neq\emptyset.
\end{align*}
Hence the family $\{\operatorname{pk}(T h)\}_{h\in \operatorname{PKat}(x)}$ of compact
subsets of $\mathrm{Y}$ has the finite intersection property. It follows that the compact set
\begin{align*}%\label{ineq309}
\PSupp_T(x)=\bigcap_{h\in \operatorname{PKat}(x)}\operatorname{pk}(T h)\neq\emptyset.
\end{align*}
\end{proof}

Arguing as in the proof of  Lemma \ref{le304} but with harmonic means replacing arithmetic means, we have the following.

\begin{lemma}\label{leH04}
If $x_1,x_2\in \mathrm{X}$, $x_1\neq x_2$, then $\PSupp_T(x_1)\cap \PSupp_T(x_2)=\emptyset$.
\end{lemma}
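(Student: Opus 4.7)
\medskip
\noindent\textbf{Plan for Lemma \ref{leH04}.}
The strategy is a direct transcription of the proofs of Lemma \ref{le304} (arithmetic case) and Lemma \ref{leG04} (geometric case), with one arithmetic-geometric-style estimate replaced by a strict monotonicity estimate for the harmonic mean. Because $F(\mathrm{X})\subset C(\mathrm{X})_+^{-1}$ and $T(F(\mathrm{X}))\subset C(\mathrm{Y})_+^{-1}$, pointwise reciprocals are continuous, and the harmonic mean of two functions $f,g$ in these cones has the concrete pointwise expression $(f!g)(x)=\dfrac{2f(x)g(x)}{f(x)+g(x)}$; this is a strictly increasing function of each argument on $(0,\infty)$.

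\medskip
\noindent\textbf{The key pointwise inequality.}
I would first establish: if $g_1,g_2\in F(\mathrm{X})$ satisfy $\operatorname{pk}(g_1)\cap\operatorname{pk}(g_2)=\emptyset$, then
\begin{equation*}
\|g_1 \,!\, g_2\|\; <\; \frac{2\|g_1\|\,\|g_2\|}{\|g_1\|+\|g_2\|}.
\end{equation*}
Indeed, for every $x\in\mathrm{X}$ one of the strict inequalities $g_1(x)<\|g_1\|$ or $g_2(x)<\|g_2\|$ must hold, so by monotonicity
$(g_1!g_2)(x)=\dfrac{2g_1(x)g_2(x)}{g_1(x)+g_2(x)}<\dfrac{2\|g_1\|\,\|g_2\|}{\|g_1\|+\|g_2\|}$.
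Since $\mathrm{X}$ is compact and $g_1!g_2$ is continuous, the supremum on the left is attained, hence the inequality is strict. Conversely, at any common peak point the harmonic mean equals $\tfrac{2\|g_1\|\|g_2\|}{\|g_1\|+\|g_2\|}$, so this quantity is the universal upper bound.

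\medskip
\noindent\textbf{Producing separating peak functions and the contradiction.}
Given $x_1\neq x_2$, Lemma \ref{lem:F(X)}(a) supplies $g_1\in F(\mathrm{X})$ with $x_1\in\operatorname{pk}(g_1)$ and $x_2\notin\operatorname{pk}(g_1)$, and then $g_2\in F(\mathrm{X})$ with $x_2\in\operatorname{pk}(g_2)$ and $\operatorname{pk}(g_2)\cap\operatorname{pk}(g_1)=\emptyset$. Suppose, for contradiction, there existed $y\in\PSupp_T(x_1)\cap\PSupp_T(x_2)$. Since $g_1\in\operatorname{PKat}(x_1)$ and $g_2\in\operatorname{PKat}(x_2)$, the definition of $\PSupp_T$ forces $(Tg_1)(y)=\|Tg_1\|=\|g_1\|$ and $(Tg_2)(y)=\|Tg_2\|=\|g_2\|$, so
\begin{equation*}
\|Tg_1\,!\,Tg_2\|\;\geq\;(Tg_1\,!\,Tg_2)(y)\;=\;\frac{2\|g_1\|\,\|g_2\|}{\|g_1\|+\|g_2\|}.
\end{equation*}
On the other hand, the hypothesis that $T$ preserves the norm of the harmonic mean (applied with $n=2$) gives $\|Tg_1\,!\,Tg_2\|=\|g_1\,!\,g_2\|$, which by the key inequality is strictly less than $\tfrac{2\|g_1\|\|g_2\|}{\|g_1\|+\|g_2\|}$. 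This contradiction yields $\PSupp_T(x_1)\cap\PSupp_T(x_2)=\emptyset$, as required.

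\medskip
\noindent\textbf{Expected difficulty.}
No step is truly difficult; the only point deserving care is the strictness in the key inequality, which is where compactness of $\mathrm{X}$ enters (so that the supremum defining the norm is attained). That is also the reason this subsection restricts to compact $\mathrm{X}$ and to $F(\mathrm{X})\subset C(\mathrm{X})_+^{-1}$: strict positivity of the functions is needed to keep the denominators away from zero so that $f!g$ makes sense and the monotonicity argument applies.
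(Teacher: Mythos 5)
Your proof is correct and is precisely the argument the paper intends: it omits the proof of Lemma \ref{leH04}, saying only ``arguing as in the proof of Lemma \ref{le304} but with harmonic means replacing arithmetic means,'' and your write-up supplies exactly that transcription, with the strict inequality $\|g_1\,!\,g_2\|<\tfrac{2\|g_1\|\|g_2\|}{\|g_1\|+\|g_2\|}$ for disjoint peak sets playing the role of $\|g_1+g_2\|<\|g_1\|+\|g_2\|$. The details (strict monotonicity of the harmonic mean in each argument, attainment of the supremum by compactness, and the choice of $g_1,g_2$ via Lemma \ref{lem:F(X)}(a)) are all handled correctly.
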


The following lemma is an analogue of the former Lemmas \ref{le301} and \ref{leG01}.

\begin{lemma}\label{leH01} For any $g_0, f\in C_{0}(\mathrm{X})_+^{-1}$, we have
\begin{align}\label{ineqH02}
\lim_{n\rightarrow\infty} (\|(ng_0^{-1}+f^{-1})^{-1}\|^{-1}-n\|g_0\|^{-1})^{-1}=\max \left\{f(x):x\in \operatorname{pk}(g_0)\right\}.
\end{align}
\end{lemma}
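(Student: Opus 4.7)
The plan is to mimic the strategy of Lemma \ref{le301} and Lemma \ref{leG01}, but reformulated in terms of infima of reciprocals, since the harmonic mean naturally involves $1/f$. The key reformulation is the identity
\[
\|(ng_0^{-1}+f^{-1})^{-1}\|^{-1} \;=\; \inf_{x\in \mathrm{X}}\left(\frac{n}{g_0(x)}+\frac{1}{f(x)}\right),
\]
valid because the sup-norm of a strictly positive continuous function coincides with the reciprocal of the infimum of its reciprocal. Since $\mathrm{X}$ is compact, this infimum is attained at some point $x_n\in \mathrm{X}$. Write $M=\|g_0\|$ and $C=\max\{f(x): x\in \operatorname{pk}(g_0)\}$. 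The quantity to analyse becomes
\[
u_n := \|(ng_0^{-1}+f^{-1})^{-1}\|^{-1} - \frac{n}{M} = \frac{n}{g_0(x_n)} - \frac{n}{M} + \frac{1}{f(x_n)},
\]
with both summands nonnegative (the first because $g_0(x_n)\leq M$).

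First I would establish an upper bound on $u_n$ by plugging into the infimum any point $x^*\in \operatorname{pk}(g_0)$ with $f(x^*)=C$: this yields $u_n\leq 1/C$ for every $n$, and therefore $1/f(x_n)\leq u_n\leq 1/C$, giving $f(x_n)\geq C$. Next I would show that every cluster point $z$ of $\{x_n\}$ belongs to $\operatorname{pk}(g_0)$. Indeed, if $g_0(z)<M$ along some subnet, the term $n(1/g_0(x_n)-1/M)$ would tend to $+\infty$, contradicting $u_n\leq 1/C$. Thus any cluster point $z$ satisfies $g_0(z)=M$ and $f(z)\geq C$, which forces $f(z)=C$ by definition of $C$.

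Having controlled the cluster points, I would conclude the limit computation: along any convergent subnet $x_{n_k}\to z\in\operatorname{pk}(g_0)$, the second summand $1/f(x_{n_k})$ tends to $1/C$; meanwhile the first nonnegative summand is bounded above by $u_n-1/f(x_n)\leq 1/C-1/f(x_n)\to 0$, and so it tends to $0$. Hence $u_{n_k}\to 1/C$. Since this holds along every convergent subnet of the bounded sequence $\{u_n\}$, the full sequence $u_n$ converges to $1/C$, which yields $u_n^{-1}\to C$, as required.

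The main technical point, and the only step where one must be a little careful, is the squeezing argument that shows the ``arithmetic defect'' term $n(1/g_0(x_n)-1/M)$ vanishes in the limit; this is where the combination of the a priori bound $u_n\leq 1/C$ (obtained by testing against peak points of $g_0$) and the nonnegativity of both summands is essential. Everything else is a direct translation of the arithmetic-mean argument in Lemma \ref{le301} into the reciprocal coordinates natural to the harmonic mean.
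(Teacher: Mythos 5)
Your proposal is correct and follows essentially the same route as the paper's proof: both identify the attaining points $x_n$ of the infimum of $n g_0^{-1}+f^{-1}$, test that infimum against peak points of $g_0$ to obtain $f(x_n)\geq C$ and the bound $u_n \leq 1/C$, and then squeeze the defect term $n\bigl(1/g_0(x_n)-1/\|g_0\|\bigr)$ between $0$ and $1/C-1/f(x_n)$. Your reorganization around the single quantity $u_n$ and its two nonnegative summands is a slightly tidier packaging of the identical estimates.
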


\begin{proof}
Let
$$
C=\max \left\{f(x):x\in \operatorname{pk}(g_0)\right\}.
$$
Since $\mathrm{X}$ is compact, for any $n\in \mathbb{N}$, there exists $x_n\in \mathrm{X}$ such that
\begin{align}
\label{ineqH04}
\frac{1}{\frac{n}{g_0(x_n)}+\frac{1}{f(x_n)}}=\|(ng_0^{-1}+f^{-1})^{-1}\|.
\end{align}
Observe that
\begin{eqnarray*}
\frac{1}{g_0(x_n)} &=& \frac{\|(ng_0^{-1}+f^{-1})^{-1}\|^{-1} - \frac{1}{f(x_n)}}{n}\\
&=& \left\|\left(g_0^{-1}+\frac{f^{-1}}{n}\right)^{-1}\right\|^{-1} -  \frac{1}{nf(x_n)}, \quad n\in\mathbb{N}.
\end{eqnarray*}
Since $f$ has a positive minimum on $\mathrm{X}$, we have
\begin{align*}%\label{eq:limg0}
\lim_{n\to \infty} g_0(x_n) = \|g_0\|.
\end{align*}
Consequently, every cluster point $z$ of the sequence $\{x_n\}$ belongs to $\operatorname{pk}(g_0)$.
We further claim that
$f(z)= C$.
Indeed, it follows from (\ref{ineqH04})   that for any $x\in \operatorname{pk}(g_0)$ we have
\begin{align*}
\frac{1}{\frac{n}{g_0(x_n)} + \frac{1}{f(x_n)}}\ =\ \|(ng_0^{-1}+f^{-1})^{-1}\|
\ \geq\ \frac{1}{\frac{n}{g_0(x)} + \frac{1}{f(x)}}
\ =\ \frac{1}{\frac{n}{\|g_0\|} + \frac{1}{f(x)}}.
\end{align*}
Note that $\|g_0\| \geq g_0(x_n)>0$ and $\frac{n}{\|g_0\|} + \frac{1}{f(x)}>0$ holds for all $n\in\mathbb{N}$. Hence,
$$
\frac{1}{f(x)} -  \frac{1}{f(x_n)} \geq n \left(\frac{1}{g_0(x_n)}-\frac{1}{\|g_0\|}\right) \geq 0, \quad n\in \mathbb{N}.
$$
Taking the maximum of $f$ over the set $\operatorname{pk}(g_0)$, we deduce
$$
f(x_n) \geq C, \quad n\in \mathbb{N}.
$$
It follows that
$f(z)\geq C$, and thus $f(z)= C$.
Since this holds for every cluster point $z$ of the sequence $\{x_n\}$, the bounded sequence  $\{f(x_n)\}$ of positive numbers converges to $C$.
Consequently, we infer
\begin{align*}
\frac{1}{f(x_n)}
&=  \frac{1}{\|(ng_0^{-1}+f^{-1})^{-1}\|} - \frac{n}{g_0(x_n)}
\leq \inf_{z\in \operatorname{pk}(g_0)} \left\{\frac{n}{g_0(z)}+\frac{1}{f(z)}\right\} - \frac{n}{g_0(x_n)} \\
&=  \frac{n}{\|g_0\|}+\frac{1}{C} - \frac{n}{g_0(x_n)},
\end{align*}
and thus
$$
0\leq n\left(\frac{1}{g_0(x_n)} - \frac{1}{\|g_0\|}\right) \leq \frac{1}{C} - \frac{1}{f(x_n)}
$$
for  all $n\in\mathbb{N}$.
It follows that
$$
\lim_{n\to\infty}  n\left(\frac{1}{g_0(x_n)} - \frac{1}{\|g_0\|}\right)=0.
$$
Letting $n\rightarrow\infty$ in
\begin{align*}
\frac{1}{f(x_n)}
=  \frac{1}{\|(ng_0^{-1}+f^{-1})^{-1}\|} - \frac{n}{\|g_0\|} + n\left(\frac{1}{\|g_0\|} - \frac{1}{g_0(x_n)}\right),
\end{align*}
we get (\ref{ineqH02}).
\end{proof}

\begin{lemma}\label{leG16}
Let $x_0\in \mathrm{X}$ and let $g_1,g_2,\ldots,g_m\in \operatorname{PKat}(x_0)$.
Then for any $f\in C(\mathrm{X})_+^{-1}$ we have
\begin{align}\label{ineqH401}
&\lim_{n\rightarrow\infty} \left(\left\|\left(n\left(\sum_{j=1}^m g_j^{-1}\right) +f^{-1}\right)^{-1}\right\|^{-1} -
n\left(\sum_{j=1}^m \left\|g_j\right\|^{-1}\right) \right)^{-1}\nonumber\\
&=\max\left\{f(x): x\in \bigcap_{j=1}^m \operatorname{pk}(g_j)\right\}.
\end{align}
\end{lemma}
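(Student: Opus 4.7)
The plan is to reduce Lemma~\ref{leG16} to the single function case already treated in Lemma~\ref{leH01}, exactly as Lemma~\ref{le306} and Lemma~\ref{leG06} were reduced to Lemma~\ref{le301} and Lemma~\ref{leG01}, respectively. The natural combined function here is the ``parallel sum''
$$
h_0 = \Bigl(\sum_{j=1}^m g_j^{-1}\Bigr)^{-1} \in C(\mathrm{X})_+^{-1},
$$
which plays the role of $\sum_j g_j$ for the arithmetic mean and $\prod_j g_j$ for the geometric mean.

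First I would establish the two properties of $h_0$ that I need: its norm and its peak set. Since each $g_j\in \operatorname{PKat}(x_0)$, the reciprocal $g_j^{-1}$ attains its \emph{minimum} $\|g_j\|^{-1}$ precisely at the points of $\operatorname{pk}(g_j)$. Adding these, the function $\sum_{j=1}^m g_j^{-1}$ attains its minimum value $\sum_{j=1}^m \|g_j\|^{-1}$ exactly on $\bigcap_{j=1}^m \operatorname{pk}(g_j)$ (for the non-trivial direction, equality in a sum of inequalities between nonnegative reals forces termwise equality). Taking reciprocals gives
$$
\|h_0\| = \Bigl(\sum_{j=1}^m \|g_j\|^{-1}\Bigr)^{-1} \quad\text{and}\quad \operatorname{pk}(h_0) = \bigcap_{j=1}^m \operatorname{pk}(g_j).
$$
In particular $x_0\in\operatorname{pk}(h_0)$, so $h_0\in \operatorname{PKat}(x_0)$.

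Now I would apply Lemma~\ref{leH01} with $g_0 := h_0$. Substituting $h_0^{-1} = \sum_{j=1}^m g_j^{-1}$ inside the norm and $\|h_0\|^{-1} = \sum_{j=1}^m \|g_j\|^{-1}$ in the subtracted term, the left-hand side of \eqref{ineqH02} becomes exactly the left-hand side of \eqref{ineqH401}, while the right-hand side $\max\{f(x):x\in \operatorname{pk}(h_0)\}$ becomes $\max\{f(x):x\in \bigcap_{j=1}^m \operatorname{pk}(g_j)\}$, which is precisely what we want.

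There is no real obstacle beyond choosing the right auxiliary function; the only mild verification is the peak-set identity $\operatorname{pk}(h_0) = \bigcap_j \operatorname{pk}(g_j)$, but this is immediate from the equality-case analysis for sums of nonnegative continuous functions. Everything else is a direct application of Lemma~\ref{leH01}, so no extra compactness or continuity argument is needed.
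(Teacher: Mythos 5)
Your proposal is correct and follows essentially the same route as the paper: the paper also introduces the parallel sum $\bigl(\sum_{j=1}^m g_j^{-1}\bigr)^{-1}$, notes that it lies in $\operatorname{PKat}(x_0)$ with norm $\bigl(\sum_{j=1}^m \|g_j\|^{-1}\bigr)^{-1}$ and peak set $\bigcap_{j=1}^m \operatorname{pk}(g_j)$, and then applies Lemma~\ref{leH01} to it. Your explicit equality-case justification of the peak-set identity is a detail the paper leaves implicit, but otherwise the two arguments coincide.
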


\begin{proof}
Since $(\sum_{j=1}^m g_j^{-1})^{-1}\in \operatorname{PKat}(x_0)$, we have
$$
\left\|\left(\sum_{j=1}^m g_j^{-1}\right)^{-1}\right\| = \left(\sum_{j=1}^m \left\|g_j\right\|^{-1}\right)^{-1}\quad\text{and}\quad
\operatorname{pk}\left(\left(\sum_{j=1}^m g_j^{-1}\right)^{-1}\right)=\bigcap_{j=1}^m \operatorname{pk}(g_j).
$$
On the other hand, it follows from Lemma \ref{leH01} that
\begin{align*}%\label{ineqH402}
&\lim_{n\rightarrow\infty} \left(\left\|\left(n\left(\sum_{j=1}^m g_j^{-1}\right) +f^{-1}\right)^{-1}\right\|^{-1} -
n\left\|\left(\sum_{j=1}^m g_j^{-1}\right)^{-1}\right\|^{-1} \right)^{-1}\\
&=\max\left\{f(x): x\in \operatorname{pk}\left(\left(\sum_{j=1}^m g_j^{-1}\right)^{-1}\right)\right\}.
\end{align*}
After this, (\ref{ineqH401}) follows.% from (\ref{ineqH402}).
\end{proof}

\begin{lemma}\label{leH03}
Let $x_0\in \mathrm{X}$. Then we have
$$
(Tf)(y)\leq f(x_0), \quad  f\in F(\mathrm{X}), y\in \PSupp_T(x_0).
$$
\end{lemma}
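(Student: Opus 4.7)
The plan is to follow exactly the pattern of Lemmas \ref{le303} and \ref{leG03}, with the harmonic analogue Lemma \ref{leH01} playing the role that Lemma \ref{le301}, respectively Lemma \ref{leG01}, played there. The idea is to build a witness function $g_0\in \operatorname{PKat}(x_0)$ whose peak set lies inside the level set of $f$ at height $f(x_0)$, extract the value $f(x_0)$ from an asymptotic formula involving only the norms of certain harmonic combinations of $g_0$ and $f$, and then transport that formula through $T$ using the harmonic-mean-norm preservation. The containment $\PSupp_T(x_0)\subset \operatorname{pk}(Tg_0)$ coming directly from the definition \eqref{E:Mol} will then close the argument.

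Concretely, I would first apply Lemma \ref{lem:F(X)}(b) to choose $g_0\in F(\mathrm{X})$ with $x_0\in \operatorname{pk}(g_0)\subset\{x\in\mathrm{X}: f(x)=f(x_0)\}$, so that $g_0\in \operatorname{PKat}(x_0)$ and $\max\{f(x): x\in \operatorname{pk}(g_0)\}=f(x_0)$. Then Lemma \ref{leH01} (applied in the compact space $\mathrm{X}$, noting that $F(\mathrm{X})\subset C(\mathrm{X})_+^{-1}$) gives
$$f(x_0)=\lim_{n\to\infty}\bigl(\|(ng_0^{-1}+f^{-1})^{-1}\|^{-1}-n\|g_0\|^{-1}\bigr)^{-1}.$$

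The key step is to push this through $T$. The harmonic mean of the $(n+1)$-tuple consisting of $n$ copies of $g_0$ together with one copy of $f$ equals $(n+1)(ng_0^{-1}+f^{-1})^{-1}$, so the assumption that $T$ preserves the norm of the harmonic mean of any finite collection yields
$$\|(ng_0^{-1}+f^{-1})^{-1}\|=\|(n(Tg_0)^{-1}+(Tf)^{-1})^{-1}\|$$
after cancelling the factor $n+1$ from both sides. Combined with $\|Tg_0\|=\|g_0\|$ and a second application of Lemma \ref{leH01} now on the image side (in $C(\mathrm{Y})_+^{-1}$), this reproduces $f(x_0)=\max\{(Tf)(y): y\in \operatorname{pk}(Tg_0)\}$. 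Since $g_0\in \operatorname{PKat}(x_0)$ forces $\PSupp_T(x_0)\subset \operatorname{pk}(Tg_0)$, we conclude $(Tf)(y)\leq f(x_0)$ for every $y\in \PSupp_T(x_0)$, as required.

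I do not anticipate a substantive obstacle here; the only point requiring minor care is the bookkeeping in the harmonic-mean identity, namely checking that the $(n+1)$-weighting really produces the clean cancellation above. Everything else is a verbatim transcription of the arithmetic-mean argument with Lemma \ref{leH01} substituted for Lemma \ref{le301}.
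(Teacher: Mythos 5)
Your proposal is correct and follows essentially the same route as the paper's proof: choose $g_0$ via Lemma \ref{lem:F(X)}(b), apply Lemma \ref{leH01} on both sides, and use $\PSupp_T(x_0)\subset \operatorname{pk}(Tg_0)$. Your explicit justification of the identity $\|(ng_0^{-1}+f^{-1})^{-1}\|=\|(n(Tg_0)^{-1}+(Tf)^{-1})^{-1}\|$ via the $(n+1)$-fold harmonic mean (with the factor $n+1$ cancelling) is exactly the bookkeeping the paper leaves implicit, and it is right.
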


\begin{proof}
Fix any $f\in F(\mathrm{X})$. By Lemma \ref{lem:F(X)}, there is a  function
$g_0\in F(\mathrm{X})$ such that
$$
x_0\in\operatorname{pk}(g_0)\subset\{x\in \mathrm{X}: f(x)=f(x_0)\}.
$$
Lemma \ref{leH01} implies that
\begin{align*}%\label{ineq312}
f(x_0)&=\max\{f(x):x\in \operatorname{pk}(g_0)\}\nonumber\\
&=\lim_{n\rightarrow\infty}(\|(ng_0^{-1}+f^{-1})^{-1}\|^{-1}-n\|g_0\|^{-1})^{-1}\nonumber\\
&=\lim_{n\rightarrow\infty}(\|(n(Tg_0)^{-1}+(Tf)^{-1})^{-1}\|^{-1}-n\|Tg_0\|^{-1})^{-1}\nonumber\\
&=\max\{(Tf)(y):y\in \operatorname{pk}(Tg_0)\}.
\end{align*}
Since $g_0\in \operatorname{PKat}(x_0)$, we have $\PSupp_T(x_0)\subset \operatorname{pk}(Tg_0)$.
The assertion follows.
\end{proof}

\begin{lemma}\label{leH05}
The set $\Supp_T(x)$ is nonempty for any $x\in \mathrm{X}$.
\end{lemma}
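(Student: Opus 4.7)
My plan is to mirror the finite-intersection-property arguments used in Lemma \ref{le305} (arithmetic mean) and Lemma \ref{leG05} (geometric mean), replacing the shifted-sum and shifted-product limit formulas by the harmonic-mean analogue of Lemma \ref{leG16}. Fix $x_0 \in \mathrm{X}$. The goal is: for any finite collection $f_1,\ldots,f_l \in F(\mathrm{X})$, produce a single $y_0 \in \PSupp_T(x_0)$ with $(Tf_i)(y_0) = f_i(x_0)$ for all $i$. Compactness of each $\Supp_T(x_0,f)$ then makes $\{\Supp_T(x_0,f)\}_{f\in F(\mathrm{X})}$ a family of compact sets with the finite intersection property, so $\Supp_T(x_0) = \bigcap_{f\in F(\mathrm{X})} \Supp_T(x_0,f)$ is nonempty.

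For the concrete execution, since $\mathrm{X}$ is compact and each $f_i$ is strictly positive, the function $\sum_{i=1}^l f_i^{-1}$ belongs to $C(\mathrm{X})_+$. By Lemma \ref{lem:F(X)}(b) there is $g_0 \in F(\mathrm{X})$ with $x_0 \in \operatorname{pk}(g_0) \subset \{x \in \mathrm{X}: \sum_i f_i(x)^{-1} = \sum_i f_i(x_0)^{-1}\}$. For arbitrary $g_1,\ldots,g_m \in \operatorname{PKat}(x_0)$, I would apply Lemma \ref{leG16} to $g_0,g_1,\ldots,g_m$ with $f = (\sum_{i=1}^l f_i^{-1})^{-1}$. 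Since $\bigcap_{j=0}^m \operatorname{pk}(g_j) \subset \operatorname{pk}(g_0)$ lies inside the level set of $\sum_i f_i^{-1}$, the right-hand side of \eqref{ineqH401} equals $(\sum_{i=1}^l f_i(x_0)^{-1})^{-1}$, so
\begin{align*}
\lim_{n\to\infty}\left(\left\|\left(n\sum_{j=0}^m g_j^{-1}+\sum_{i=1}^l f_i^{-1}\right)^{-1}\right\|^{-1} - n\sum_{j=0}^m \|g_j\|^{-1}\right)^{-1} = \left(\sum_{i=1}^l f_i(x_0)^{-1}\right)^{-1}.
\end{align*}
The hypothesis that $T$ preserves the norm of the harmonic mean of any finite collection of elements amounts to $\|(\sum_k h_k^{-1})^{-1}\| = \|(\sum_k (Th_k)^{-1})^{-1}\|$ (the cardinality factor cancels), so the inner norm above is unaffected if each $g_j$ is replaced by $Tg_j$ (with multiplicity $n$) and each $f_i$ by $Tf_i$. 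Applying Lemma \ref{leG16} a second time on the $\mathrm{Y}$-side, which is legitimate because $\PSupp_T(x_0) \subset \bigcap_j \operatorname{pk}(Tg_j)$ by Lemma \ref{leH02}, then furnishes a point $y_0 \in \bigcap_{j=0}^m \operatorname{pk}(Tg_j)$ satisfying $\sum_i (Tf_i)(y_0)^{-1} = \sum_i f_i(x_0)^{-1}$. Combining this with $(Tf_i)(y_0) \leq f_i(x_0)$ from Lemma \ref{leH03}, equivalently $(Tf_i)(y_0)^{-1} \geq f_i(x_0)^{-1}$, the termwise inequality and equality of sums force $(Tf_i)(y_0) = f_i(x_0)$ for each $i$.

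The main obstacle I anticipate is the transfer of Lemma \ref{leG16} from $\mathrm{X}$ to $\mathrm{Y}$: this requires both a common-peak hypothesis for the $Tg_j$'s and genuine preservation of the parallel-sum norm under $T$. Both ingredients are available, the former from Lemma \ref{leH02} and the latter from the harmonic-mean hypothesis, but the bookkeeping with multiplicities and reciprocals must be handled carefully. Once the previous paragraph is in place, the conclusion is routine: as $g_1,\ldots,g_m$ range over $\operatorname{PKat}(x_0)$, the compact sets $\operatorname{pk}(Tg) \cap \bigcap_{i=1}^l \{y: (Tf_i)(y)=f_i(x_0)\}$ indexed by $g \in \operatorname{PKat}(x_0)$ enjoy the finite intersection property, producing an element of $\PSupp_T(x_0) \cap \bigcap_{i=1}^l \Supp_T(x_0,f_i)$. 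Since $f_1,\ldots,f_l$ were arbitrary, a final finite-intersection argument over all $f \in F(\mathrm{X})$ yields a point in $\Supp_T(x_0)$, completing the proof.
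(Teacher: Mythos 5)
Your overall strategy is exactly the paper's: choose $g_0$ peaking inside the level set of $\sum_i f_i^{-1}$, run the limit formula of Lemma \ref{leG16} on both sides of $T$ using the harmonic-mean hypothesis, extract a common point, upgrade the equality of sums of reciprocals to termwise equality via Lemma \ref{leH03}, and finish with finite-intersection arguments. There is, however, one step that does not go through as written: you apply Lemma \ref{leH03} to the point $y_0\in\bigcap_{j=0}^m\operatorname{pk}(Tg_j)$ produced by the limit argument, but Lemma \ref{leH03} gives $(Tf_i)(y)\le f_i(x_0)$ only for $y\in\PSupp_T(x_0)=\bigcap_{h\in\operatorname{PKat}(x_0)}\operatorname{pk}(Th)$, and your $y_0$ is only known to lie in a finite subintersection. (Knowing $\operatorname{pk}(g_0)\subset\{x:\sum_i f_i(x)^{-1}=\sum_i f_i(x_0)^{-1}\}$ does not force each individual $f_i$ to be constant on $\operatorname{pk}(g_0)$, so the termwise inequality cannot be read off from membership in $\operatorname{pk}(Tg_0)$ either.) Consequently the finite intersection property of your family $\operatorname{pk}(Tg)\cap\bigcap_{i}\{y:(Tf_i)(y)=f_i(x_0)\}$ is not yet established.

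The gap is easily repaired, and the paper's proof shows one way: run the finite-intersection argument first with the single constraint $\sum_i(Tf_i)(y)^{-1}=\sum_i f_i(x_0)^{-1}$ (for which your limit computation suffices), conclude that $\PSupp_T(x_0)\cap\{y\in\mathrm{Y}:\sum_i(Tf_i)(y)^{-1}=\sum_i f_i(x_0)^{-1}\}\neq\emptyset$, and only then invoke Lemma \ref{leH03} at a point of $\PSupp_T(x_0)$ to pass to termwise equality. Alternatively, you could keep your ordering by adjoining to $g_0,g_1,\ldots,g_m$ auxiliary functions $g_0^{(i)}\in\operatorname{PKat}(x_0)$ with $\operatorname{pk}(g_0^{(i)})\subset\{x\in\mathrm{X}:f_i(x)=f_i(x_0)\}$ for each $i$; the proof of Lemma \ref{leH03} then yields $(Tf_i)(y_0)\le f_i(x_0)$ for any $y_0\in\operatorname{pk}(Tg_0^{(i)})$. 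Everything else --- the cancellation of the cardinality factor, the appeal to Lemma \ref{leH02} to justify applying Lemma \ref{leG16} on the $\mathrm{Y}$-side, and the final intersection over all $f\in F(\mathrm{X})$ --- matches the paper and is correct.
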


\begin{proof}
Let $x_0\in \mathrm{X}$.
Fix functions $f_1,\ldots f_l\in F(\mathrm{X})$.
By Lemma \ref{lem:F(X)}, we can choose a  function $g_0\in F(\mathrm{X})$ such that
\[
x_0\in \operatorname{pk}(g_0)\subset\left\{x\in \mathrm{X}: \sum_{i=1}^l f_i(x)^{-1}=\sum_{i=1}^l f_i(x_0)^{-1}\right\}.\]
Hence, for any $g_1,g_2,\ldots,g_m\in \operatorname{PKat}(x_0)$, it follows from Lemma \ref{leG16} that
\begin{align*}%\label{ineq316}
&\left(\sum_{i=1}^l f_i(x_0)^{-1}\right)^{-1}\\
&=\max\left\{\left(\sum_{i=1}^l f_i(x)^{-1}\right)^{-1} :x\in \bigcap_{j=0}^m \operatorname{pk}(g_j)\right\}\nonumber\\
&=\lim_{n\rightarrow\infty} \left(\left\|\left(n\left(\sum_{j=0}^m g_j^{-1}\right) +\sum_{i=1}^l f_i^{-1}\right)^{-1}\right\|^{-1} -
n\left(\sum_{j=1}^m \left\|g_j\right\|^{-1}\right) \right)^{-1} \nonumber\\
&=\lim_{n\rightarrow\infty}\left(\left\|\left(n\left(\sum_{j=0}^m (Tg_j)^{-1}\right) +\sum_{i=1}^l Tf_i^{-1}\right)^{-1}\right\|^{-1} -
n\left(\sum_{j=1}^m \left\|Tg_j\right\|^{-1}\right) \right)^{-1} \nonumber\\
&=\max\left\{\left(\sum_{i=1}^l (Tf_i)(y)^{-1}\right)^{-1} :y\in \bigcap_{j=0}^m \operatorname{pk}(Tg_j)\right\}.
\end{align*}
It follows that there is a point $y_0\in \bigcap_{j=0}^m \operatorname{pk}(Tg_j)$ such that
\begin{align*}%\label{ineq317}
\sum_{i=1}^l(Tf_i)(y_0)^{-1}=\sum_{i=1}^lf_i(x_0)^{-1}.
\end{align*}
Consequently, we obtain
\begin{align*}%\label{ineq318}
\left(\bigcap_{j=0}^{m}\operatorname{pk}(T g_j)\right)\bigcap\left\{y\in \mathrm{Y}:\sum_{i=1}^l(Tf_i)(y)^{-1}=\sum_{i=1}^lf_i(x_0)^{-1}\right\}\neq\emptyset.
\end{align*}
Therefore, the family
$$\left\{\operatorname{pk}(T g)\bigcap\left\{y\in \mathrm{Y}:\sum_{i=1}^l(Tf_i)(y)^{-1}=\sum_{i=1}^lf_i(x_0)^{-1}\right\}\right\}_{g\in \operatorname{PKat}(x_0)}$$
of compact subsets of $\mathrm{Y}$ has the finite intersection property.
It follows that
\begin{align*}%\label{ineq320}
\bigcap_{g\in \operatorname{PKat}(x_0)}\left(\operatorname{pk}(T g)\bigcap\left\{y\in \mathrm{Y}:\sum_{i=1}^l(Tf_i)(y)^{-1}=\sum_{i=1}^lf_i(x_0)^{-1}\right\}\right)\neq\emptyset,
\end{align*}
that is,
\begin{align*}%\label{ineq321}
\PSupp_T(x_0)\bigcap\left\{y\in \mathrm{Y}:\sum_{i=1}^l(Tf_i)(y)^{-1}=\sum_{i=1}^lf_i(x_0)^{-1}\right\}\neq\emptyset.
\end{align*}
This means that there exists a point $y_0\in \PSupp_T(x_0)$ such that $\sum_{i=1}^l(Tf_i)(y_0)^{-1}=\sum_{i=1}^lf_i(x_0)^{-1}$.
Moreover, Lemma \ref{leH03} implies that $(Tf_i)(y_0)\leq f_i(x_0)$, and hence we obtain
$$
(Tf_i)(y_0)= f_i(x_0), \quad i=1,2,\ldots,l.
$$
In other words, the family  of compact sets
$$
\Supp_T(x_0, f)=\PSupp_T(x_0)\cap\{y\in \mathrm{Y}: Tf(y)=f(x_0)\}, \quad f\in F(\mathrm{X}),
$$
 has the finite intersection
property.  It follows  that
\[\Supp_T(x_0)=\bigcap_{f\in F(\mathrm{X})}\Supp_T(x_0, f)\neq\emptyset.\]
\end{proof}

As we did with Theorems \ref{th301} and \ref{thG01}, we can verify the following result which is the part of Theorem \ref{thm:p-means}
relating the harmonic mean.

\begin{theorem}\label{thH01}
Let $\mathrm{X},\mathrm{Y}$ be compact Hausdorff spaces.
Let $F(\mathrm{X})$ be a subset of $C(\mathrm{X})_{+}^{-1}$ containing sufficiently many functions to peak on  compact $G_\delta$ subsets of $\mathrm{X}$.
Let  $T: F(\mathrm{X}) \rightarrow C_{0}(\mathrm{Y})_{+}^{-1}$ be a map which preserves the norm of the harmonic mean of any finite collection of elements in $F(\mathrm{X})$.
Then there is a closed subset $\mathrm{Y}_0$ of $\mathrm{Y}$ and a surjective continuous map $\tau: \mathrm{Y}_0\rightarrow \mathrm{X}$ such that
\begin{align*}%\label{ineqG3222}
(Tf)(y)= f(\tau (y)), \quad y\in \mathrm{Y}_0, f\in F(\mathrm{X}).
\end{align*}
In particular, $T$ is injective.

If the range $T(F(\mathrm{X}))$ of $T$ also  contains sufficiently many functions to peak on  compact $G_\delta$ subsets of $\mathrm{Y}$, then $\mathrm{Y}_0=\mathrm{Y}$ and $\tau$ is a homeomorphism from $\mathrm{Y}$ onto $\mathrm{X}$.
\end{theorem}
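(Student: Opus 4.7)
The plan is to follow the template established in the proof of Theorem \ref{th301}, with only minor bookkeeping adjustments to account for the fact that we now work with strictly positive (invertible) functions and with the harmonic rather than the arithmetic mean. All the preparatory ingredients are in place: Lemma \ref{leH04} gives disjointness of the partial supports at distinct points, Lemma \ref{leH02} their nonemptiness, Lemma \ref{leH03} the pointwise upper bound $(Tf)(y)\leq f(\tau(y))$, and Lemma \ref{leH05} the nonemptiness of $\Supp_T(x)$ for every $x\in \mathrm{X}$.

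First I would set
\[
\mathrm{Y}_0=\bigcup_{x\in \mathrm{X}}\Supp_T(x),
\]
which by Lemma \ref{leH04} is a disjoint union, and define $\tau:\mathrm{Y}_0\to \mathrm{X}$ by $\tau(y)=x$ when $y\in \Supp_T(x)$. The map $\tau$ is then well defined and surjective, and by the characterization \eqref{eq:yinsupp} we automatically have $(Tf)(y)=f(\tau(y))$ for every $y\in \mathrm{Y}_0$ and every $f\in F(\mathrm{X})$. Continuity of $\tau$ follows from Lemma \ref{lem:F(X)}(c): if $y_\mu\to y_0$ in $\mathrm{Y}_0$ then $f(\tau(y_\mu))=(Tf)(y_\mu)\to (Tf)(y_0)=f(\tau(y_0))$ for every $f\in F(\mathrm{X})$, whence $\tau(y_\mu)\to \tau(y_0)$ in $\mathrm{X}$. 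Injectivity of $T$ is immediate from surjectivity of $\tau$: if $Tf=Tg$ then $f(\tau(y))=g(\tau(y))$ for all $y\in \mathrm{Y}_0$, so $f=g$ on $\mathrm{X}$.

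Next I would verify that $\mathrm{Y}_0$ is closed in $\mathrm{Y}$. Let $y'$ be the limit of a net $\{y_\lambda\}\subset \mathrm{Y}_0$. Because $\mathrm{X}$ is compact, the net $\{\tau(y_\lambda)\}$ has a cluster point $x'\in \mathrm{X}$, and by continuity of each $Tf$ and $f$ together with \eqref{eq:yinsupp}, $(Tf)(y')=f(x')$ for all $f\in F(\mathrm{X})$. Hence $y'\in \Supp_T(x')\subset \mathrm{Y}_0$, proving closedness.

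Finally, for the surjective-range refinement, assume $T(F(\mathrm{X}))$ contains sufficiently many functions to peak on compact $G_\delta$ subsets of $\mathrm{Y}$. Since $T$ is injective (just shown), the inverse $T^{-1}:T(F(\mathrm{X}))\to F(\mathrm{X})$ is a well-defined map that also preserves the norm of the harmonic mean of any finite collection. Applying the entire preceding machinery to $T^{-1}$ yields a closed subset $\mathrm{X}_0\subset \mathrm{X}$ and a continuous surjection $\sigma:\mathrm{X}_0\to \mathrm{Y}$ with $(T^{-1}g)(x)=g(\sigma(x))$, or equivalently $(Tf)(\sigma(x))=f(x)$ for $f\in F(\mathrm{X})$ and $x\in \mathrm{X}_0$. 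This exhibits $\sigma(x)\in \Supp_T(x)$ for every $x\in \mathrm{X}_0$, so $\sigma(\mathrm{X}_0)\subset \mathrm{Y}_0$, and surjectivity of $\sigma$ forces $\mathrm{Y}_0=\mathrm{Y}$. For $x\in \mathrm{X}_0$ we then get $f(\tau(\sigma(x)))=(Tf)(\sigma(x))=f(x)$ for all $f\in F(\mathrm{X})$, and since $F(\mathrm{X})$ separates the points of $\mathrm{X}$ by Lemma \ref{lem:F(X)}(a), $\tau\circ\sigma=\mathrm{id}_{\mathrm{X}_0}$. Symmetrically $\sigma\circ\tau=\mathrm{id}_{\mathrm{Y}_0}=\mathrm{id}_\mathrm{Y}$ and $\mathrm{X}_0=\mathrm{X}$, so $\tau$ is a homeomorphism. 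I expect no real obstacle here beyond the careful transposition of the arithmetic-mean argument; the only point requiring any care is the use of the correct limit-formula lemmas (Lemmas \ref{leH01} and \ref{leG16}) when invoking \eqref{eq:yinPsupp} and \eqref{eq:yinsupp}, but these have already been proved to hold unconditionally on $C(\mathrm{X})_+^{-1}$.
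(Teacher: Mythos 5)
Your proposal is correct and follows exactly the route the paper intends: the paper itself does not write out a separate proof of Theorem \ref{thH01} but states that it is verified "as we did with Theorems \ref{th301} and \ref{thG01}", i.e., by transporting the proof of Theorem \ref{th301} using the harmonic-mean lemmas (Lemmas \ref{leH02}--\ref{leH05}) in place of the arithmetic-mean ones, with closedness of $\mathrm{Y}_0$ coming directly from compactness of $\mathrm{X}$. Your construction of $\mathrm{Y}_0$ and $\tau$, the continuity and closedness arguments, and the $T^{-1}$ step for the final assertion all match the paper's argument.
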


In the following,
modifying Example \ref{eg:2notn}, we find a map preserving the norm of the harmonic mean of any pair of strictly positive continuous
functions which is not a generalized composition operator.

\begin{example}\label{eg:2HM-not-CO}
Let $\mathrm{X}$ be a single point space and identify the cone $C(\mathrm{X})_{++}$ of all strictly
positive functions on $\mathrm{X}$ with the interval $(0,\infty)$.
Let $A_1=\, (0,1]$, $A_2=\, (1,2]$ and $A_3=\, (2,\infty)$.
Define $\psi_i:\, (0,\infty)\,\longrightarrow\, (0,\infty)$ by
\begin{align}\psi_i(t) =\left\{\begin{array}{ll}
2t,&\,\,\,\,  t\in A_i,\\
t,&\,\,\,\, t\notin A_i,\end{array} \quad i=1,2,3.
\right. \nonumber\end{align}
We construct a map $V:\, (0,\infty)\,\longrightarrow C([0,1])_{++}$ by
 \begin{align}
 (Vt)(y)=\left\{\begin{array}{ll}
(1-2y)\psi_1(t)+2y\psi_2(t),&\quad  y\in[0,1/2],\\
(2-2y)\psi_2(t)+(2y-1)\psi_3(t),&\quad y\in \, (1/2,1],\end{array}\right.\nonumber
\end{align}
for all $t\in \, (0,\infty)$.
Define a new map $R: (0,\infty)\, \longrightarrow C([0,1])_{++}$ by
$$
Rt= \dfrac{1}{{V(1/t)}}, \quad t\in (0,\infty).
$$
Since
$$
\min_{y\in [0,1]} \{V(t_1)(y) + V(t_2)(y)\} = t_1 + t_2,
$$
we see that
$$
\left\|\frac{1}{R(t_1)^{-1} + R(t_2)^{-1}}\right\| = \left\|\frac{1}{V(1/t_1) + V(1/t_2)}\right\|
= \frac{1}{{t_1}^{-1} + {t_2}^{-1}}, \quad t_1,t_2\in (0,\infty).
$$
Therefore,
$R$ preserves the norm of the harmonic mean of any pair
of strictly positive continuous functions.  It is plain that $R$ is not a generalized composition operator as in \eqref{eq:wco}.
\end{example}

The following example demonstrates that the closed subset $\mathrm{Y}_0$ appearing in Theorem \ref{thH01} can be proper.

\begin{example}\label{eg:HM-NCO}
Let $\mathrm{X}=[0,1]$ and $\mathrm{Y}=[0,1/2]\cup [5/4, 3/2]$.
Define $T: C(\mathrm{X})_{++}\to C(\mathrm{Y})_{++}$ by
\[
(Tf)(y)=\left\{\begin{array}{ll} f(2y),& 0\leq y\leq 1/2,\\
(2y-2)f(0), & 5/4\leq y\leq 3/2. \end{array}\right.
\]
While $T$ preserves the norm of the harmonic mean of any finite collection of strictly positive continuous
functions, plainly, it is not a composition operator.
\end{example}

\subsection{Tingley and other versions of the norm of mean preserver problem}\label{Ss:3.4}

Following \cite{MS15}, one  defines a  mean of nonnegative real numbers to be a sequence
of continuous functions $M_n: [0,\infty)^n \to [0,\infty)$ for $n\geq 2$, satisfying  the following conditions:
\begin{description}
  \item[Internality] $\min\{a_1,\ldots,a_n\} \leq M_n(a_1,\ldots,a_n) \leq \max\{a_1,\ldots,a_n\}$,
  \item[Monotonicity] $a_j\leq b_j$, $j=1,\ldots,n$, imply $M_n(a_1,\ldots,a_n)\leq M_n(b_1,\ldots,b_n)$, and
  \item[Homogeneity] $M_n(\lambda a_1,\ldots, \lambda a_n) = \lambda M_n(a_1,\ldots,a_n)$,
  \end{description}
  for all $a_1,\ldots,a_n$ and $ \lambda>0$.

Such a sequence  defines naturally a  mean $\mm$ of arbitrary finitely many  functions $f_1,f_2,\ldots, f_n$  in
$C_0(\mathrm{X})_+$ on a locally compact space $\mathrm{X}$ by
$$
(f_1\,\mm\, f_2\,\mm\, \cdots\,\mm\,  f_n)(x) = M_n(f_1(x),f_2(x), \ldots, f_n(x)), \quad x\in \mathrm{X}.
$$
Classical examples include the \textit{power means} $\mm^p$  associated to
$$
M^p_n(a_1,\ldots,a_n) = \left(\frac{1}{n}\sum_{j=1}^{n} a_j^p\right)^{1/p}, \quad p\in (-\infty,\infty).
$$
In particular, we have $\mm^{1}$ being the arithmetic mean, and $\mm^{-1}$ being the harmonic mean (when $p<0$, we usually assume that all numbers $a_j$ are positive).
Moreover, by convention, $\mm^0 := \lim_{p\to 0}\mm^p $ is the geometric mean.
For more examples and properties of general means, see, e.g., \cite{Bullen-Book03}.

%In general, we consider a family $\{M_n: n=2,3,\ldots\}$ of continuous
%functions $M_n: [0,\infty[^n \to [0,\infty[$ satisfying conditions similar to (N1), (N2) and (N3).
%We define the associated $\mm$ mean of finite many functions in $C_0(\mathrm{X})_+$  by
%$$ (f_1\mm f_2 \mm \cdots \mm f_n)(x) = M_n(f_1(x), f_2(x),\ldots,f_n(x)), \quad x\in \mathrm{X}, n=2,3,\ldots. $$

With the success of dealing with the most important  means,
namely the arithmetic, geometric and harmonic means, it would be natural to consider the following problem.

\begin{problem}\label{prob:general-means}
  Let $\mm$ be a mean of positive continuous functions.
  Let $\mathrm{X},\mathrm{Y}$ be locally compact Hausdorff spaces.
Let $F(\mathrm{X})\subseteq C_0(\mathrm{X})_{+}$ and $F(\mathrm{Y})\subseteq C_{0}(\mathrm{Y})_{+}$ be subsets containing
sufficiently many functions to peak on  compact $G_\delta$ subsets of $\mathrm{X}$ and $\mathrm{Y}$, respectively.
Let  $T: F(\mathrm{X}) \rightarrow F(\mathrm{Y})$ be a map preserving the norm of the $\mm$ mean of any finite collection of elements in $F(\mathrm{X})$.

Can $T$ be extended to a generalized composition operator?
More precisely, can one find a locally compact subset $\mathrm{Y}_0$ of $\mathrm{Y}$ and a surjective continuous function $\tau: \mathrm{Y}_0\rightarrow \mathrm{X}$ such that
\begin{align*}%\label{ineqG3222}
(Tf)(y)= f(\tau (y)), \quad y\in \mathrm{Y}_0, f\in F(\mathrm{X})?
\end{align*}
\end{problem}

%Let $X,Y$ be first countable locally compact spaces.
%Assume that the subalgebras
%$F(\mathrm{X})\subset C_0(\mathrm{X})$ and $F(\mathrm{Y})\subset C_0(\mathrm{Y})$  contain sufficiently many functions peaking at any
%point in $\mathrm{X}$ and $\mathrm{Y}$, respectively.
Under some natural conditions,
Theorem 3 in  \cite{MS15} provides an affirmative answer to
Problem \ref{prob:general-means} when
$T:F(\mathrm{X})\to F(\mathrm{Y})$ is bijective,
and $\|Tf\,\mm\,  Tg\|=\|f\,\mm\,  g\|$ holds for any $f,g\in F(\mathrm{X})$.
Therefore,  Problem \ref{prob:general-means} is mainly for non-bijective norm of mean preservers.

For the special class of power means $\mm^p$,  Theorem \ref{thm:p-means} provides an affirmative answer to Problem \ref{prob:general-means}.
Below we present its proof.

\begin{proof}[Proof of Theorem \ref{thm:p-means}]
The cases when $p=1,0$ and $-1$ for
the arithmetic, geometric and harmonic means
are done in Theorems \ref{th301}, \ref{thG01} and \ref{thH01}, respectively.
When $p>0$, we consider the map $f\mapsto (Tf^{\frac 1p})^p$, which
preserves the norm of the arithmetic mean of any finite collection of elements.
When $p<0$, we consider the map $f\mapsto (Tf^{\,-\frac 1p})^{-p}$, which preserves the norm of the harmonic mean of any finitely many elements.
Applying Theorems \ref{th301} and \ref{thH01} respectively to these two maps, we arrive at the desired conclusions.
\end{proof}

In the case of general means $\mm$, Problem \ref{prob:general-means} seems to be really challenging.

Finally,
let us provide the proof of the positive answer to a
Tingley version (Corollary \ref{cor:3means-sphere}) of
 Problem \ref{pr101}.

\begin{proof}[Proof of Corollary \ref{cor:3means-sphere}]
Arguing as in the proof of Theorem \ref{thm:p-means},
it suffices to verify the assertions for  arithmetic, geometric and harmonic means.

We work for the cases of arithmetic and geometric means first.
Note that both $C_{0}(\mathrm{X})_{+,1}$ and $C_{0}(\mathrm{Y})_{+,1}$ contain
sufficiently many functions to peak on  compact $G_\delta$ sets in $\mathrm{X}$ and  $\mathrm{Y}$, respectively.
The assertions follows from Theorem \ref{thm:p-means}, or more precisely, Theorems \ref{th301} and \ref{thG01},
if the preserver transformation $T$ is also onto.
Therefore, it suffices to check that $T$ is onto $ C_{0}(\mathrm{Y})_{+,1}$ if $T$ has dense range.

Let $g\in  C_{0}(\mathrm{Y})_{+,1}$ and $f_n \in  C_{0}(\mathrm{X})_{+,1}$ be such that $Tf_n \to g$ in norm.
By the stated representation \eqref{eg:Tingley-rep}, we have
\begin{align*}
\| f_n - f_m\| &= \sup \{|f_n(x) - f_m(x)|: x\in \mathrm{X}\}\\
& = \sup \{|f_n(\tau(y)) - f_m(\tau(y))\| : y\in \mathrm{Y}_0\}\\
&= \sup \{ |(Tf_n)(y) - (Tf_m)(y)| : y\in \mathrm{Y}_0\}\\
& \leq \|Tf_n - Tf_m\| \to 0,\quad\text{as $m,n\to 0$}.
\end{align*}
Therefore, $\{f_n\}$ is a Cauchy sequence in $C_0(\mathrm{X})_{+,1}$ and thus converges to some $f\in C_0(\mathrm{X})_{+,1}$ in norm.
Clearly, the continuous functions $Tf$ and $g$ agree on $\mathrm{Y}_0$.
If  $\mathrm{Y}_0$ is dense in $\mathrm{Y}$, then $Tf=g$.  This says that $T$ is onto, as asserted.

So, we need to verify that $\mathrm{Y}_0$ is dense in $\mathrm{Y}$.
If it is not the case, we would have a nonzero function $g\in  C_{0}(\mathrm{Y})_{+,1}$ vanishing on $\mathrm{Y}_0$.
The above argument ensures that we would have a sequence $\{f_n\}$ in $C_{0}(\mathrm{X})_{+,1}$ converging to some $f\in C_{0}(\mathrm{X})_{+,1}$ in norm
such that $\{ Tf_n\}$ converges to  $g$ in norm.  Now $Tf$ agrees with $g$ on $\mathrm{Y}_0$, and thus
 $Tf$ vanishes on $\mathrm{Y}_0$.  This gives $f=0$, a contradiction since each $f_n$ has norm one. Therefore, $\mathrm{Y}_0$ is dense in $\mathrm{Y}$.

Now we consider the case when $X,Y$ are compact Hausdorff spaces and  $L: C(X)_{+,1}^{-1}\to C(Y)_{+,1}^{-1}$ preserves
the norm of the harmonic mean of arbitrary finite family of strictly positive functions of norm one.
We proceed as above with Theorem  \ref{thH01} instead, but
note that
%if $Tf_n\to g$ in $C(Y)_{+,1}^{-1}$ then
we might need to verify that the limit $f$ of the Cauchy sequence $\{f_n\}$ of strictly positive functions of norm one
is again strictly positive.

Because $g$ is strictly positive,
$$
\lambda := \frac{1}{2}\min_{y\in Y} g(y) >0.
$$
Since $Tf_n\to g$, for large enough $n$ we have that
$$
f_n(\tau(y)) = (Tf_n)(y) > \lambda, \quad y\in Y_0,
$$
and since $\tau(Y_0)=X$, we see that
$$
\min_{x\in X} f_n(x) > \lambda,
$$
for all large enough $n$.  Thus the  limit $f$ of the sequence $\{f_n\}$ is strictly positive, as asserted.
The rest goes exactly the same way as above, and the proof is complete.
\end{proof}

\begin{remark}
  In Section \ref{S:3},
  all properties we make use of the domain $F(\mathrm{X})$ of a map $T$ which preserves the norm of a mean are those listed in Lemma \ref{lem:F(X)}.
  Therefore, we can restate  Theorems \ref{thm:p-means}, \ref{th301}, \ref{thG01} and \ref{thH01} and
  Corollary \ref{cor:3means-sphere}, by assuming that
  $F(\mathrm{X})$, as well as $F(\mathrm{Y})=T(F(\mathrm{X}))$, satisfies the three conditions in Lemma \ref{lem:F(X)} instead.  Indeed, these conditions
  merely say that the peak sets of functions in $F(\mathrm{X})$ determine the topology of $\mathrm{X}$.
\end{remark}

\section*{acknowledgements} Dong is supported in part
by the Natural Science Foundation of China (11671314). Li is the corresponding author.
Moln\'ar acknowledges supports from
the Ministry for Innovation and Technology, Hungary,
grant TUDFO/47138-1/2019-ITM and from the National Research, Development and Innovation Office of Hungary, NKFIH, Grant No. K115383, K134944.
Wong acknowledges support from the Taiwan MOST grant 108-2115-M-110-004-MY2, and
he expresses his deep appreciation to the colleagues in Tiangong University
and Nankai University
for their warm hospitality during his visit  there in 2019,
when this project started.

%\end{acknowledgements}

\end{document}